\newcommand{\sen}{\,{\rm sin}}
\newcommand{\wt}{\widetilde}
\newcommand{\wh}{\widehat}
\newcommand{\m}{{\rm m}}
\newcommand{\ov}[1]{{\overline{#1}}}
\def\prod#1{\langle{#1}\rangle}
\newtheorem{theorem}{Theorem}[section]
\newtheorem{lemma}[theorem]{Lemma}
\newtheorem{proposition}[theorem]{Proposition}
\newtheorem{corollary}[theorem]{Corollary}
\theoremstyle{definition}
\newtheorem{example}[theorem]{Example}
\newtheorem{remark}[theorem]{Remark}
\newtheorem{theoremA}{Theorem}
\renewcommand{\qed}{\hfill \nobreak \ifvmode \relax \else
      \ifdim\lastskip<1.5em \hskip-\lastskip
      \hskip1.5em plus0em minus0.5em \fi \nobreak
      \vrule height0.75em width0.5em depth0.25em\fi}
\renewenvironment{proof}[1][Proof:]{\begin{trivlist}
\item[\hskip \labelsep {\bfseries #1}]}{\qed \end{trivlist}}
\newcommand{\Ad}{\mathrm{Ad}}
\newcommand{\g}{\mathfrak{g}}
\renewcommand{\k}{\mathfrak{k}}
\renewcommand{\a}{\mathfrak{a}}
\renewcommand{\m}{\mathfrak{m}}
\newcommand{\s}{\mathfrak{s}}
\newcommand{\z}{\mathfrak{z}}
\renewcommand{\u}{\mathfrak{u}}
\newcommand{\gl}{\mathfrak{gl}}
\newcommand{\so}{\mathfrak{so}}
\renewcommand{\t}{\mathfrak{t}}
\newcommand{\R}{\mathbb{R}}
\newcommand{\C}{\mathbb{C}}
\newcommand{\Z}{\mathbb{Z}}
\newcommand{\ad}{\mathrm{ad}}
\newcommand{\SO}{\mathrm{SO}}
\title{Counting geodesics on compact symmetric spaces}
\author[L. Seco]{Lucas Seco$^*$}
\author[M. Patr\~ao]{Mauro Patr\~{a}o$^*$}
\address{$^*$Universidade de Brasília -- Departamento de Matemática, Campus Darcy Ribeiro, Asa Norte 70910-900 - Brasília, DF - Brazil}
\email{lseco@unb.br, mpatrao@mat.unb.br}
\begin{document}

\maketitle

\begin{abstract}
We describe the inverse image of the Riemannian exponential map at a basepoint of a compact symmetric space as the disjoint union of so called focal orbits through a maximal torus. These are orbits of a subgroup of the isotropy group acting in the tangent space at the basepoint. We show how their dimensions (infinitesimal data) and connected components (topological data) are encoded in the diagram, multiplicities, Weyl group and lattice of the symmetric space. Obtaining this data is precisely what we mean by counting geodesics. This extends previous results on compact Lie groups.  We apply our results to give short independent proofs of known results on the cut and conjugate loci of compact symmetric spaces.

%Dedicated to Prof.\ Luiz A.\ B.\ San Martin for his 68th birthday??
\end{abstract}

\bigbreak
\subjclass{\footnotesize\textit{AMS 2010 subject classification}: 
	Primary: 53C22, 53C35; Secondary: 53C20.}
	
\bigbreak
\keywords{\footnotesize \textit{Keywords:} Geodesics, Symmetric spaces, Lie groups, Global Riemannian geometry.}

\section{Introduction}
The set of smooth geodesics connecting two points $p$ and $q$ of a Riemannian manifold $M$ can be viewed as the tangent directions at $p$ in the inverse image of $q$ by the Riemannian exponential map $\exp_p: T_p M \to M$ at the basepoint $p$. To describe this set is roughly what we mean by {\em counting} geodesics.

We can use the action of the isometry group of $M$ to move around a geodesic $\gamma$ connecting $p$ and $q$, producing more geodesics connecting them. Indeed, let $K$ be isotropy subgroup of isometries that fix $p$ and let $K^q$ be the subgroup of $K$ of isometries that further fix $q$. Then the whole orbit $K^q \gamma$ consists of geodesics between $p$ and $q$.  This orbit can also be seen in the tangent space $T_pM$ to $p$ if we identify the geodesic $\gamma$ with its initial velocity $H = \gamma'(0)$. Then $\gamma(t) = \exp_p(tH)$, $q = \gamma(1) = \exp_p(H)$ and, for $k \in K$, $k \gamma(t) = \exp_p( t k H )$, where we denote by $k H = d_p k H$ the isotropy action of $K$ in $T_pM$. 
The orbit $K^q \gamma$ then gets identified with so called {\em focal orbit} of $H$
\begin{equation}
\label{eq-def-focal-orbit}
\mathcal{F}(H) = K^q H \subseteq T_pM
\end{equation}
It follows that $K^q$ acts in the inverse image $\exp_p^{-1}(q)$ which is, thus, a union of focal orbits. Describe this union and count the dimension and connected components of each focal orbit is precisely what we mean by {\em counting} geodesics.

% Denote by $K^H$ the subgroup of $K_p$ that fixed the tangent direction $H$, then $\mathcal{F}(H) = K_p^q/K_p^H$.

In this article we count the geodesics of compact Riemannian symmetric spaces $S = U/K$: they are natural candidates for that, since their symmetry properties allows us to study their isotropy orbits in terms of Lie algebraic data. We now state our main results. 
% Let $\u = \k \oplus \s$ be the symmetric Lie algebra that corresponds to $S = U/K$. Then $T_pM$ gets identified with $\s$.  
Let $\t \subseteq T_pM$ be a maximal flat torus with corresponding lattice 
\begin{equation}
\label{def:lattice}
%Normalização de \pi?
\Gamma = \{ \gamma \in \t: \exp_p(\gamma) = p \}
\end{equation}
restricted roots, Stiefel diagram $\mathcal{S}$, root hyperplanes and Weyl group $W$ (see Section \ref{sec:prelim}).
%$\Pi \subseteq \t^*$, 
% root hyperplanes $\tlpha = 0$ and diagram hyperplanes $\tlpha = k \in \Z$, for $\tlpha \in \Pi$.
Let $q = \exp_p(H)$ with $H \in T_pM$, using the isotropy action of $K$ in $T_pM$ we can assume that $H \in \t$. Consider in $W$ the centralizer of $H\mbox{ mod }\Gamma$ given by
\begin{equation}
\label{def:centralizer}
W^q = \{ w \in W: wH \in H + \Gamma \}
\end{equation}
and the normal subgroup $W^q_0$ generated by reflections around root hyperplanes paralell to the hyperplanes in $\mathcal{S}$ which cross $H$.

%Since $\Gamma$ contains the fundamental lattice $\Gamma_0$, $\Z$-spanned by the dual roots $\tlpha^\vee$, $W^h_0$ is a subgroup of $W^h$: it is actually a normal subgroup.
%
% Let $\Gamma = \{ \gamma \in \t: \exp(\gamma) \in K \}$ the corresponding lattice.  
% Assume w.l.o.g.\ that $H \in \t$ and let $h = \exp(H) \in U$ so that $q = hp$. 
% Let the normalizer $W$ of $\t$ in $K$ be the corresponding Weyl group and consider the centralizer of $H \mod \Gamma$ given by $W^h = \{ w \in W: wH \in H + \Gamma \}$ and normal the subgroup $W^h_0$ generated by reflections around root hyperplanes paralell to the diagrams hyperplanes which cross $H$.
%$W^h = W \cap K_p^q$, $W^h_0 = W \cap (K_p^q)_0$.  
 
\begin{theoremA}
\label{thm:main}
The geodesics between $p$ and $q$ of the compact symmetric space $S = U/K$ are given by the disjoint union of focal orbits through $H + \Gamma$
$$
\exp_p^{-1}(q) = \bigcup_{\gamma \in \Gamma} \mathcal{F}(H + \gamma)
$$
where:
\begin{enumerate}
    \item $\dim \mathcal{F}(H + \gamma) = $ equals to the number of non-root hyperplanes of $\mathcal{S}$ that cross $H + \gamma$, counted with multiplicity.
    
    \item $\mathcal{F}(H + \gamma)$ intersects $\t$ in the orbit $W^q(H+\gamma)$, its connected components are diffeomorphic to one another, correspond to the left $W^q_0$-orbits in $W^q (H+\gamma)$ and are in bijection with the quotient group $W^q/W^q_0$.
    
    \item Each connected component of $\mathcal{F}(H + \gamma)$ corresponds to a unique homotopy class of geodesics between $p$ and $q$.
\end{enumerate}
\end{theoremA}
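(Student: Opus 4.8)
The plan is to transfer everything to the Lie algebra level via $T_pM\cong\p$, where $\u=\k\oplus\p$ is the Cartan decomposition, $\exp_U$ denotes the exponential of $U$, $\exp_p(X)=\exp_U(X)\cdot p$, the geodesic with velocity $X$ is $t\mapsto\exp_U(tX)\cdot p$, and $k\cdot\exp_p(X)=\exp_p(\Ad(k)X)$ for $k\in K$. I would rely throughout on three facts from Section~\ref{sec:prelim}: (i) $\Ad(K)$ is transitive on the maximal abelian subspaces of $\p$, and a $K$-orbit in $S$ meets the flat torus $\overline A=\exp_U(\t)\cdot p$ in exactly a $W$-orbit (equivalently, $K$-conjugate points of $\overline A$ are $N_K(\t)$-conjugate); (ii) $\Ad(K)Y\cap\t=WY$ for $Y\in\t$; and (iii) the lattice--diagram relation $\alpha(\gamma)\in\pi\Z$ for every restricted root $\alpha$ and every $\gamma\in\Gamma$, so that $\alpha(H+\gamma)\equiv\alpha(H)\pmod{\pi\Z}$ — together with the fact recorded there that $W^q_0$ is normal in $W^q$ and equals the Weyl group of the subsystem $\Sigma^q=\{\alpha\in\Sigma:\alpha(H)\in\pi\Z\}$ (the roots of the hyperplanes of $\mathcal S$ through $H$).

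For the decomposition, the inclusion $\mathcal F(H+\gamma)\subseteq\exp_p^{-1}(q)$ is clear: $\exp_U(\gamma)$ fixes $p$ and $\t$ is abelian, so $\exp_p(H+\gamma)=\exp_U(H)\exp_U(\gamma)\cdot p=q$, while $K^q$ preserves $\exp_p^{-1}(q)$. For the reverse inclusion I would take $X$ with $\exp_p(X)=q$, choose a maximal abelian $\a\ni X$ and, by (i), $k_0\in K$ with $\Ad(k_0)\a=\t$; then $q$ and $k_0\cdot q$ are $K$-conjugate points of $\overline A$, so (i) gives $n\in N_K(\t)$ with $n\cdot q=k_0\cdot q$, and $k:=n^{-1}k_0\in K^q$ satisfies $\Ad(k)X\in\t$. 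Since $\t\cap\exp_p^{-1}(q)=H+\Gamma$ (for $Y\in\t$, $\exp_U(Y-H)\in K\iff Y-H\in\Gamma$), this places $X$ in some $\mathcal F(H+\gamma)$. The same argument yields $\mathcal F(H+\gamma)\cap\t=W^q(H+\gamma)$: it lies in $\Ad(K)(H+\gamma)\cap\t=W(H+\gamma)$ by (ii); any $w$ that occurs sends $H+\gamma$ into $\t\cap\exp_p^{-1}(q)=H+\Gamma$, hence lies in $W^q$; and every $w\in W^q$ is realized by an element of $N_K(\t)$ fixing $q$. One point needing care is that $\mathcal F(H+\gamma)$ and $\mathcal F(H+\gamma')$ coincide, as the same $K^q$-orbit, exactly when $H+\gamma'\in W^q(H+\gamma)$, so ``disjoint union'' must be read with this identification; the distinct focal orbits that occur are then genuinely disjoint compact $K^q$-orbits, and since $\{\|H+\gamma\|:\gamma\in\Gamma\}$ is discrete the family is locally finite in $\p$, so these are precisely the connected components of $\exp_p^{-1}(q)$.

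For (1) I would compute $\dim\mathcal F(H+\gamma)=\dim K^q(H+\gamma)$ from the restricted root space decomposition $\k=\m\oplus\bigoplus_{\alpha>0}\k_\alpha$, $\p=\t\oplus\bigoplus_{\alpha>0}\p_\alpha$. With $s=\exp_U(H)$ one has $\k^q=\mathrm{Lie}(K^q)=\{Z\in\k:\Ad(s^{-1})Z\in\k\}$, and since $\Ad(s^{-1})=e^{-\ad H}$ acts on each block $\k_\alpha\oplus\p_\alpha$ as a planar rotation by $-\alpha(H)$, this is $\m\oplus\bigoplus_{\alpha(H)\in\pi\Z}\k_\alpha$; likewise $[\,\cdot\,,H+\gamma]$ scales $\p_\alpha$ by $\alpha(H+\gamma)$, so the $\Ad(K^q)$-stabilizer of $H+\gamma$ has Lie algebra $\m\oplus\bigoplus_{\alpha(H)\in\pi\Z,\ \alpha(H+\gamma)=0}\k_\alpha$. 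Subtracting, $\dim\mathcal F(H+\gamma)=\sum_{\alpha>0,\ \alpha(H)\in\pi\Z,\ \alpha(H+\gamma)\ne0}m_\alpha$; by (iii) the conditions $\alpha(H)\in\pi\Z$ and $\alpha(H+\gamma)\in\pi\Z$ are equivalent, so this equals $\sum_{\alpha>0,\ \alpha(H+\gamma)\in\pi\Z\setminus\{0\}}m_\alpha$, i.e. the number of non-root hyperplanes of $\mathcal S$ through $H+\gamma$ counted with multiplicity.

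For (2)--(3) the key move is to realize $\mathcal F(H+\gamma)$ as an isotropy orbit of a smaller symmetric space. Since $K^q\subseteq Z_U(s^2)=:U^q$, the involution $\theta$ preserves $\u^q=\ker(\Ad(s^2)-1)$, giving a symmetric pair with isotropy algebra $\k^q$, tangent space $\p^q=\t\oplus\bigoplus_{\alpha(H)\in\pi\Z}\p_\alpha$, maximal flat $\t$, the same lattice $\Gamma$, and restricted root system $\Sigma^q$ of Weyl group $W^q_0$. As $\Ad(K^q)$ preserves $\p^q$ and contains the connected group $(K^q)_0$, the standard polar facts for $(K^q)_0$ give that every $(K^q)_0$-orbit in $\p^q$ meets $\t$ in a single $W^q_0$-orbit; since $K^q/(K^q)_0$ is finite, the connected components of $\mathcal F(H+\gamma)=K^q(H+\gamma)$ are precisely its $(K^q)_0$-orbits, which $K^q$ permutes transitively (hence they are mutually diffeomorphic), and intersection with $\t$ is a bijection from them onto the $W^q_0$-orbits in $\mathcal F(H+\gamma)\cap\t=W^q(H+\gamma)$. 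To finish, $W^q(H+\gamma)\cong W^q/\mathrm{Stab}_{W^q}(H+\gamma)$; by (iii) any $w$ fixing $H+\gamma$ is automatically in $W^q$, while Chevalley's theorem writes $\mathrm{Stab}_{W^q}(H+\gamma)$ as generated by the $r_\beta$ with $\beta(H+\gamma)=0$, for which $\beta(H)=-\beta(\gamma)\in\pi\Z$, so $r_\beta\in W^q_0$; normality of $W^q_0$ then collapses the $W^q_0$-orbits on $W^q(H+\gamma)$ to the cosets $W^q/W^q_0$. For (3): a path $s\mapsto X_s$ inside a connected component of $\exp_p^{-1}(q)$ gives, via $(s,t)\mapsto\exp_p(tX_s)$, an endpoint-preserving homotopy of the corresponding geodesics, and conversely a continuous deformation of geodesics from $p$ to $q$ is such a path, so — using the identification above of focal-orbit components with components of $\exp_p^{-1}(q)$ — each component of $\mathcal F(H+\gamma)$ is exactly one homotopy class. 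The step I expect to fight hardest is (2): isolating the centralizer pair $(\u^q,\theta)$ and tracking the possibly disconnected $K^q$ against $(K^q)_0$, where the passage to $W^q/W^q_0$ hinges on $\mathrm{Stab}_W(H+\gamma)\subseteq W^q_0$ and hence on the interaction of Chevalley's theorem with $\alpha(\Gamma)\subseteq\pi\Z$; the over-counting in the decomposition also needs explicit care rather than glossing.
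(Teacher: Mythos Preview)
Your approach to the decomposition and to items (1)--(2) is essentially the paper's: the same centralizer pair $(Z_U(h^2),\sigma)$ with restricted root system $\Sigma^q$ and Weyl group $W^q_0$, the same computation of $\k^q$ and of the stabilizer of $H+\gamma$, and the same passage from $(K^q)_0$-orbits to $W^q_0$-orbits in $\t$. Two points, however, need attention.

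First, the second half of your fact (i) --- that $K$-conjugate points of the flat $\exp(\t)\cdot p$ are $N_K(\t)$-conjugate --- is \emph{not} in the preliminaries. It is precisely Theorem~\ref{theorem:well-know}, and the paper devotes all of Section~\ref{sec:weyl-group-action} to proving it by lifting through the Klein universal cover of $S$. The Lie-algebra statement $\Ad(K)Y\cap\t=WY$ is indeed standard (equation~\eqref{eq-lemma-well-known}), but its manifold-level analogue on $U/K$ is the new technical ingredient behind the reverse inclusion in the decomposition, and for general compact $S$ (possibly with Euclidean factor, possibly with disconnected $K$) it is not immediate.

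Second, and more seriously, your argument for (3) has a genuine gap. You show that a path within a component of $\exp_p^{-1}(q)$ yields a fixed-endpoint homotopy of geodesics, and that a homotopy \emph{through geodesics} yields such a path. But ``unique homotopy class'' requires that geodesics in distinct components are not homotopic through \emph{arbitrary} paths rel endpoints, and your converse does not touch this. The paper closes the gap via Theorem~\ref{thm:pi1-reticulado} ($\pi_1(S)=\Gamma/\Gamma_0$) together with the characterization of $W^q_0$ as the centralizer of $H$ modulo $\Gamma_0$ (end of Section~\ref{sec:lattices}), producing an injection $W^q/W^q_0\hookrightarrow\Gamma/\Gamma_0$, $w\bmod W^q_0\mapsto(wH-H)\bmod\Gamma_0$. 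Without the fundamental-group input there is no reason different components should represent different classes. Along the same lines, your passing claim that the distinct focal orbits ``are precisely the connected components of $\exp_p^{-1}(q)$'' is false --- focal orbits can be disconnected (Example~\ref{ex:rank1} for $\R P^2$, Example~\ref{ex:posto 2} for $\mathrm{Gr}_2(\R^4)$) --- though you evidently know this, since you compute their components in (2).
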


This result shows how infinitesimal (dimension) and topological (connected components) data of the inverse image of the exponential of $S$ can be read from data in a maximal flat torus of $S$, showing how we can count geodesics just by looking at $\t$.  Note that the lattice $\Gamma$ is topological data of $S=U/K$ (see Theorem \ref{thm:pi1-introd}), while the roots, their multiplicity, Stiefel diagram and Weyl group is infinitesimal data and depends only on $(\u, \k)$.

% Note that the root data of $U/K$, as well as the Weyl groups $W$ and $W^h_0$ is infinitesimal since they only depends on Lie algebra data, while the lattice $\Gamma$, as well as the Weyl group $W^h$, is topological data.

This extends previous results of \cite{SS18}, where the subgroup \eqref{def:centralizer} was introduced to count the geodesics of compact Lie groups (see Example \ref{ex:grupos})
%
%{\bf Importante entender/explicar: por que não é possível deduzir os presentes resultados mergulhando $S$ em (um recobrimento) de $U$?}
Note that, since focal orbits of $S$ can be disconnected (see Section \ref{sec:examples}), but are always connected for simply connected $S$ (see Theorem \ref{thm:simply-connected-introd}), we cannot obtain our general result by projecting it from the result on the simply connected covering. 
For the same reason, since only simply connected $S$ is guaranteed to embed in its isometry group $U$, we cannot obtain the general result from the compact group case. Note, furthermore, that the universal cover of $S$ is not compact when $S$ has an euclidean factor.

\begin{remark}
\label{rmk:focal-equivalents}
If one wishes to obtain all the geodesics of a fixed length $|H|$ between $p$ and $q$, one first obtains the so called focal equivalents of $H$ on $\t$, i.e.\ those $\Gamma$-equivalents $H + \gamma$ with length $|H + \gamma| = |H|$, then considers the focal orbits through them. The focal equivalents can be obtained from the lattice $\Gamma$ by a geometric construction involving mediator hyperplanes (see Section 4 of \cite{SS18}).
\end{remark}

We apply Theorem \ref{thm:main} to extends to compact symmetric spaces $S$ the following result, which is well known for symmetric spaces of compact type (see Proposition VI.2.4 of \cite{loos}). 
%(see Section \ref{sec:lattices}).

\begin{theoremA}
\label{thm:pi1-introd}
Let $\Gamma_0$ be the fundamental lattice. Then $\pi_1(S) = \Gamma/\Gamma_0$.
\end{theoremA}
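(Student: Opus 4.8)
The plan is to compare $S = U/K$ with its universal Riemannian cover $\pi\colon \tilde S\to S$, with $\pi(\tilde p)=p$, which is again a symmetric space (in fact a product of a Euclidean space and a simply connected symmetric space of compact type, hence complete) sharing the infinitesimal data $(\u,\k)$ of $S$. In particular $T_{\tilde p}\tilde S = T_pS = \p$, the maximal flat $\t$, the restricted roots and the Stiefel diagram $\mathcal S$ are the same, and we use that the fundamental lattice $\Gamma_0$ is the lattice of $\tilde S$, that is $\Gamma_0 = \{\gamma\in\t : \widetilde{\exp}_{\tilde p}(\gamma)=\tilde p\}$. Being a local isometry, $\pi$ intertwines the exponentials, $\pi\circ\widetilde{\exp}_{\tilde p} = \exp_p$ on $\p$; feeding in $\gamma\in\Gamma_0$ shows $\Gamma_0\subseteq\Gamma$, and $\pi$ restricts to a covering of the maximal flat tori $\tilde T := \widetilde{\exp}_{\tilde p}(\t) = \t/\Gamma_0 \to T := \exp_p(\t) = \t/\Gamma$, since $\exp_p|_\t$ and $\widetilde{\exp}_{\tilde p}|_\t$ are the universal covers of the flat tori $T$ and $\tilde T$. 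Thus it suffices to prove that the inclusion-induced homomorphism $f_*\colon \pi_1(T)=\Gamma \to \pi_1(S)$ is surjective with kernel $\Gamma_0$, for then $\pi_1(S)=\mathrm{im}\,f_*\cong \Gamma/\Gamma_0$.

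The kernel is immediate: a loop in $T$ representing $\gamma\in\Gamma=\pi_1(T)$ is homotopic in $T$ to $t\mapsto\exp_p(t\gamma)$, whose lift to $\tilde S$ starting at $\tilde p$ is $t\mapsto\widetilde{\exp}_{\tilde p}(t\gamma)$; this lift is a loop — equivalently, the original loop is null-homotopic in $S$ — exactly when $\widetilde{\exp}_{\tilde p}(\gamma)=\tilde p$, that is when $\gamma\in\Gamma_0$. Surjectivity of $f_*$, on the other hand, follows at once from the single geometric fact that \emph{every point of $\tilde S$ lying over $p$ already lies in the flat torus $\tilde T$}: given $[\ell]\in\pi_1(S,p)$, lift $\ell$ to a path in $\tilde S$ from $\tilde p$ to some $\tilde q\in\pi^{-1}(p)\subseteq\tilde T$; since $\tilde T$ is path-connected and $\tilde S$ is simply connected, this path is homotopic rel endpoints to a path lying in $\tilde T$, and the $\pi$-image of that path is a loop in $T$ homotopic to $\ell$.

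So the crux — and the only place where I expect to do real work — is the fact $\pi^{-1}(p)\subseteq\tilde T$. Let $\tilde q\in\pi^{-1}(p)$. As $\tilde S$ is complete, $\tilde q = \widetilde{\exp}_{\tilde p}(H)$ for some $H\in\p$; and since every vector of $\p$ is $\Ad(K_0)$-conjugate into the maximal flat $\t$, write $H = \Ad(k)H_0$ with $k$ in the identity component $K_0$ and $H_0\in\t$. The isotropy action of $K_0$ on $S$, which fixes $p$, lifts to an action of $K_0$ on $\tilde S$ by isometries fixing $\tilde p$ and inducing the same isotropy representation on $\p$ (because $\tilde S=\tilde U/\tilde K$ has the same isotropy pair $(\u,\k)$, or directly by the lifting criterion, $K_0$ being connected). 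Hence $\tilde q = \widetilde{\exp}_{\tilde p}(\Ad(k)H_0) = k\cdot\widetilde{\exp}_{\tilde p}(H_0)$, and $\widetilde{\exp}_{\tilde p}(H_0)\in\tilde T\cap\pi^{-1}(p)$ since it projects to $k^{-1}\cdot p=p$. Finally $\pi^{-1}(p)$ is a discrete set preserved by the connected group $K_0$, so $K_0$ acts trivially on it, and therefore $\tilde q = \widetilde{\exp}_{\tilde p}(H_0)\in\tilde T$, as needed. The Euclidean factor of $\tilde S$ causes no difficulty, being simply connected and complete; and one may alternatively package the surjectivity–kernel step through Theorem~\ref{thm:main} applied with $q=p$, where $\exp_p^{-1}(p)=\bigcup_{\gamma\in\Gamma}\mathcal F(\gamma)$ with every focal orbit connected ($W^q/W^q_0$ being trivial there), which identifies $\Gamma=\pi_1(T)\to\pi_1(S)$ with the map sending $\gamma$ to the class of the geodesic loop $t\mapsto\exp_p(t\gamma)$.
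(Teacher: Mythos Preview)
Your argument is essentially correct and takes a genuinely different route from the paper. You work with the universal Riemannian cover $\tilde S\to S$, reduce everything to the single statement $\pi^{-1}(p)\subseteq\tilde T$, and prove that by the elegant observation that the connected group $K_0$ lifts to act on $\tilde S$ fixing $\tilde p$ and therefore fixes the discrete fibre $\pi^{-1}(p)$ pointwise. The paper instead argues from scratch: surjectivity of $\pi_1(Tp)\to\pi_1(S)$ comes from a curve-shortening/closed-geodesic argument, triviality of the $\Gamma_0$-loops from the $S^3$-construction of Proposition~\ref{propos:gamma-inv}, and the hard direction (loops outside $\Gamma_0$ survive) from the explicit covering $\t_{\mathrm{reg}}\times K/M\to S_{\mathrm{reg}}$ of Proposition~\ref{prop:pi1-reticulado}, which in turn rests on Theorem~\ref{thm:main}. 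Your route is shorter and conceptually clean; the paper's route is self-contained and showcases their machinery.

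One point you should make explicit rather than assert: that the lattice of $\tilde S$ is exactly $\Gamma_0$. In the paper this is a \emph{consequence} of the theorem (Corollary~\ref{corol:simply-connected}), not an input. Your reduction is still valid because $\tilde S$ splits as a simply connected compact-type factor times a Euclidean factor; on the Euclidean factor the lattice is trivially $0$, and on the compact-type factor the identity $\Gamma_{\tilde S_1}=\Gamma_0$ is the classical result the paper itself flags as well known (Loos). But you should say this, since the paper's whole point is to handle the Euclidean factor carefully, and your proof only avoids circularity by invoking the compact-type case externally. Finally, your closing remark about packaging things through Theorem~\ref{thm:main} with $q=p$ does give surjectivity (every geodesic loop lies in some $\mathcal F(\gamma)$), but it does not by itself identify the kernel with $\Gamma_0$; that still needs the lattice of $\tilde S$.
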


We use this to prove the following connectedness result.

\begin{theoremA}
\label{thm:simply-connected-introd}
Let $S$ be simply connected. Then $\Gamma = \Gamma_0$ and $W^q = W^q_0$. In particular, $K^q$ is connected and thus all focal orbits are connected.
\end{theoremA}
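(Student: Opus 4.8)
The plan is to establish the two claimed equalities and then read off the statements about $K^q$ and the focal orbits. The equality $\Gamma = \Gamma_0$ is immediate: Theorem~\ref{thm:pi1-introd} gives $\Gamma/\Gamma_0 \cong \pi_1(S)$, which vanishes by hypothesis, and since $\Gamma_0 \subseteq \Gamma$ always, we get $\Gamma = \Gamma_0$. So the real work is the equality $W^q = W^q_0$, and within it the inclusion $W^q \subseteq W^q_0$, because $W^q_0 \subseteq W^q$ holds unconditionally: if $\alpha$ is a restricted root whose hyperplane is parallel to a hyperplane $L$ of $\mathcal S$ through $H$, then the affine reflection in $L$ fixes $H$ and equals the composition of the linear reflection $s_\alpha$ with a translation by a vector lying in $\Gamma_0 \subseteq \Gamma$ (it lies in the translation lattice of the affine group generated by $\mathcal S$), whence $s_\alpha H \in H + \Gamma$, i.e.\ $s_\alpha \in W^q$.

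For $W^q \subseteq W^q_0$ I would use the affine reflection group $\wh W$ generated by the reflections in the hyperplanes of the Stiefel diagram $\mathcal S$, whose translation lattice is $\Gamma_0$ by the definition of the fundamental lattice (Section~\ref{sec:prelim}). Given $w \in W^q$, write $wH = H + \gamma$ with $\gamma \in \Gamma = \Gamma_0$. Then the affine isometry $x \mapsto wx - \gamma$ belongs to $\wh W$ and fixes $H$. By the classical structure theorem for point stabilizers in a group generated by affine reflections (the stabilizer of a point is generated by the reflections in the hyperplanes through that point), this isometry is a product of reflections $s_{L_1},\dots,s_{L_k}$ with each $L_i \in \mathcal S$ passing through $H$. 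Passing to linear parts kills the translations and turns each $s_{L_i}$ into the linear reflection $s_{\alpha_i}$, where $\ker \alpha_i$ is the root hyperplane parallel to $L_i$; hence $w = s_{\alpha_1}\cdots s_{\alpha_k}$ is a product of generators of $W^q_0$, so $w \in W^q_0$.

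Finally, with $\Gamma = \Gamma_0$ (equivalently $W^q = W^q_0$) the group $K^q$ is connected: this follows from the analysis of $K^q$ carried out for Theorem~\ref{thm:main}, in which $(K^q)_0$ is shown to surject onto $W^q_0$ and $K^q$ onto $W^q$, so that $\pi_0(K^q) \cong W^q/W^q_0$, now trivial. Connectedness of every focal orbit is then a formality, since $\mathcal F(H+\gamma) = K^q(H+\gamma)$ is the continuous image of the connected group $K^q$; alternatively it is immediate from Theorem~\ref{thm:main}(2), whose component count $W^q/W^q_0$ is now trivial.

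The main obstacle is the middle step, and more precisely the bookkeeping around lattices: one must be certain that the fundamental lattice $\Gamma_0$ of Theorem~\ref{thm:pi1-introd} is exactly the translation lattice of $\wh W$ — the Stiefel diagram of a symmetric space can be irregular (unequal multiplicities, possibly non-reduced restricted root systems), so one should rule out an index mismatch — and that the cited stabilizer theorem genuinely applies to $\wh W$; it does, since that theorem only needs the hyperplane arrangement to be locally finite and the group to be generated by the associated reflections. A lesser point is to make the passage to linear parts airtight, i.e.\ to check that every hyperplane of $\mathcal S$ through $H$ is a level set of a restricted root, so that its linear part really is one of the generators of $W^q_0$.
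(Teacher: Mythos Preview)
Your argument for $\Gamma=\Gamma_0$ and for the connectedness of the focal orbits matches the paper's (Corollary~\ref{corol:simply-connected}: feed $\Gamma=\Gamma_0$ into Theorem~\ref{thm-steinberg} and read off connectedness from Theorem~\ref{propos:cc-focal}).

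For $W^q = W^q_0$ you take a somewhat different route. The paper invokes Theorem~\ref{thm-steinberg}(iii), whose proof rests on the description of focal equivalents from \cite{SS18}: one places $H$ in $\overline{\mathcal D_0}$, observes that $wH$ for $w\in W^q$ is a focal equivalent of $H$, hence lies in $W^q_0 H$ by item~(ii), and concludes. You instead work directly with the affine group generated by the reflections in $\mathcal S$: since its translation lattice is $\Gamma_0$, the affine map $x\mapsto wx-\gamma$ lies in it and fixes $H$, so by the standard stabilizer theorem it is a product of reflections in diagram hyperplanes through $H$, whose linear parts generate $W^q_0$. Both arguments are correct and close in spirit; yours is more self-contained, the paper's recycles the Dirichlet-domain machinery already set up for Section~6.

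There is, however, a genuine gap in your deduction that $K^q$ is connected. The analysis behind Theorem~\ref{thm:main} (Proposition~\ref{propos:compon-conexas-k-q}) gives $K^q = K^q_0 M_*^q$ with $\Ad(M_*^q)|_\t = W^q$ and $\Ad(K^q_0\cap M_*^q)|_\t = W^q_0$. This yields only a \emph{surjection} $\pi_0(K^q)\twoheadrightarrow W^q/W^q_0$, not an isomorphism: the kernel is $M/(M\cap K^q_0)$, and nothing in the paper forces $M\subseteq K^q_0$ (cf.\ the Remark after Theorem~\ref{propos:cc-focal}). Indeed the paper's own proof (Corollary~\ref{corol:simply-connected}) does not establish connectedness of $K^q$ either --- it proves only that focal orbits are connected --- and the claim is in fact problematic: for $S=\mathrm{SU}(3)/\SO(3)$, which is simply connected, and generic $H\in\t$ one has $\k^q=\m=0$, hence $K^q_0=\{1\}$, while $K^q\supseteq M\cong(\Z/2\Z)^2$. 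So from $W^q=W^q_0$ you should conclude only that the focal orbits are connected, via Theorem~\ref{thm:main}(2), which is exactly what the paper actually proves.
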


We then apply our results to provide short independent proofs of classical results by Crittenden \cite{crittenden} and Sakai \cite{sakai} that characterize, respectively, the first conjugate locus and the cut locus of a compact symmetric space. 

% -LUCAS: Teorema D é a primeira parte do Teorema I de \cite{bott}.

% And to investigate the geometry of minimal geodesics of $S$, in the spirit of \cite{bott}.  Let $\mathcal{ D}$ be the Dirichilet domain of the lattice $\Gamma$ (see Section \ref{sec:lattices}). We say that a focal orbit ${\mathcal F}(H)$ is minimal when $H \in \partial {\mathcal D}$.

% \begin{theoremA}
% \label{thm:minimal-focal}
% The connected components of a minimal focal orbit ${\mathcal F}(H)$ are symmetric spaces.  If $S$ is simply connected then a minimal focal orbit is connected and contains all minimal geodesics from $p$ to $q$.
% \end{theoremA}

% Finally, we use this to investigate the geometry of flag manifolds.
% We say that a focal orbit ${\mathcal F}(H)$ is integral when $\alpha(H) \in \pi \Z$ for all $\alpha \in \Pi$ (see Section \ref{sec:symmetric} for the definitions cominuscle flag manifold).

% \begin{theoremA}
% \label{thm:symmetric-flag}
% Let $S$ be simply connected and irreducible. A flag manifold of $S$ is cominuscle iff it is isometric to a minimal integral focal orbit. 
% \end{theoremA}

% We do feel that Theorem \ref{thm:main} will have another applications either in simplifying and refining known results or in proving new results about compact symmetric spaces.

%-REMARK ADICIONADO:

\bigbreak

The structure of the article is as follows.
In Section 2 we introduce the notation and standard results of our setup.
In Section 3 we extend to a maximal flat torus of a compact symmetric space a diagonalization result about the action of the Weyl group.
In Section 4 we prove Theorem \ref{thm:main}.
In Section 5 we prove Theorem \ref{thm:pi1-introd}.
In Sections 6 we apply our results to provide independent proofs of classical results by Crittenden and Sakai.
%we apply our results to provide short and independent proofs of classical results.

In a forthcoming paper we will use the results of this article to refine the results of Bott \cite{bott} about the space of minimimal geodesics of $S$ and provide a related characterization of symmetric flag manifolds.

\bigbreak

We would like to thank K-H.\ Neeb and C.\ Gorodski for helpful conversations about symmetric spaces.

\subsection{Examples}\label{sec:examples}
We finish this introduction by illustrating our main results in examples of low rank.

A rank 0 compact symmetric space is a torus $T$, for which it is well known that $\exp_p^{-1}(q) = \bigcup_{\gamma \in \Gamma} (H + \gamma)$ and $\pi_1(T) = \Gamma$. These become special cases of Theorems \ref{thm:main} and \ref{thm:pi1-introd}, since the Stiefel diagram, fundamental lattice and Weyl group of a torus are trivial.

\begin{example}\label{ex:rank1}
(rank 1)
The round $2$-sphere $S^2 \subseteq \R^3$, has exponential map $\exp_p(H) = \cos(|H|)p + \sen(|H|)H/|H|$ for $H \neq 0$. The inverse image $\exp_p^{-1}(q)$ is a concentric union of circles of radii $2k\pi $ if $q = p$, a concentric union of circles of radii $(2k+1)\pi$ if $q = -p$, otherwise it is the discrete set $H + 2\pi k H/|H|$ along the direction of $H$, where $\exp_p(H) = q$, $k \in \Z$. 

This is recovered by Theorem \ref{thm:main} as follows.
$S^2 = \SO(3)/K$ is a symmetric space, where the isotropy of $p=(0,0,1)$ is $K=\left\{
\begin{psmallmatrix}
\ast & \ast & \\
 \ast & \ast & \\
 &    & 1
\end{psmallmatrix}
\in \SO(3)
\right\} = 
\SO(2)$, which is an open subgroup of the fixed point set 
of the conjugation of $\SO(3)$ by the reflection on the $z$-axis $\begin{psmallmatrix}
1 &  & \\
 & 1 & \\
 &    & -1
\end{psmallmatrix}$, given by $\left\{
\begin{psmallmatrix}
\ast & \ast & \\
 \ast & \ast & \\
 &    & \pm 1
\end{psmallmatrix}
\in \SO(3)
\right\}$. The tangent plane $T_p S^2$ is the $(x,y)$-plane with the isotropy action of $\SO(2)$, where the $x$ axis is a maximal flat torus $\t$ with simple root $\alpha(x) = x$ with multiplicity $1$. Thus we have Stiefel diagram $\mathcal{S} = \pi \Z$, whose nonzero points are conjugate tangent points along $x$, Weyl group $W = \{\pm 1\}$ and lattice $\Gamma = 2\pi \Z$.

% This is recovered by Theorem \ref{thm:main} as follows.
% $S^2 = \SO(3)/\SO(2)$ is a symmetric space, where the isotropy $K=\SO(2)$ of $p=(0,0,1)$ is contained in the fixed point set of the conjugation of $\SO(3)$ by the reflection on the $z$-axis. The tangent plane $T_p S^2$ is the $(x,y)$-plane with the isotropy action of $\SO(2)$, where the $x$ axis is a maximal flat torus $\t$ with simple root $\alpha(x) = x$ with multiplicity $1$, thus Stiefel diagram $\mathcal{S} = \pi \Z$, whose nonzero points are conjugate tangent points along $x$, and lattice $\Gamma = 2\pi \Z$. 

Let $q = \exp_p(H)$.
Then $q = p$ iff $H \in \Gamma$, in this case $K^q = K$ so that focal orbits through non-zero tangent vectors are circles and $\exp_p^{-1}(q)$ is the union of concentric circles centered on the origin passing through $\Gamma$, thus circles of radii $2k\pi$.  
Furthermore $q = -p$ iff $H \in \pi + \Gamma$, in this case once again $K^q = K$ so that $\exp_p^{-1}(q)$ is the union of circles centered on the origin passing through $\pi + \Gamma$, thus circles of radii $(2k + 1)\pi$.
Note that $\mathcal{S} = \Gamma \cup ( \pi + \Gamma)$ so that in both previous cases all focal orbits cross the diagram, thus $W^q_0 = \{ \pm 1 \}$.
Furthermore, $W^q = \{ \pm 1 \}$ so that the focal orbit through $H + \gamma$ meets $\t$ in $\pm(H + \gamma)$ and is connected. Finally, if $q \neq \pm p$, then $K^q = 1$ so that the focal orbits are singletons and $\exp_p^{-1}(q)$ is the discrete set $H + \Gamma$ in the $x$ axis. Note that in this case no focal orbit crosses the diagram and $W^q = W^q_0 = 1$.  In all cases both $K^q$ and (thus) all focal orbits are connected.

We can get disconnected focal orbits on the real projective $2$-space $\R P^2$. Let $\pi: S^2 \to \R P^2$ be the canonical projection $\pi(v) = [v] = \pm v$, then the exponential at $[p] = (0,0,\pm 1)$ is $\exp_{[p]} = \pi \circ \exp_p$.
Let $[q] = (\pm 1,0,0)$, so that $[q] = \exp_{[p]}(H)$ for $H = \pi/2 \in \t$, 
then $\exp_{[p]}^{-1}([q])$ is the union of $\pi/2 + k\pi$, $k \in \Z$.
This is recovered by Theorem A as follows. $\R P^2 = \SO(3)/K$ is a symmetric space, where the isotropy of $[p]$ is $K = \left\{
\begin{psmallmatrix}
\ast & \ast & \\
 \ast & \ast & \\
 &    & \pm 1
\end{psmallmatrix}
\in \SO(3)
\right\} = \SO(2) \cup J\, \SO(2)$,
where $J =
\begin{psmallmatrix}
1 &  & \\
 & -1 & \\
 &    & -1
\end{psmallmatrix}$ restricts to a reflection in the $xy$-plane.
Since $\exp_{[p]} = \pi \circ \exp_p$, the infinitesimal setup is the same as the $2$-sphere, now with lattice $\Gamma = \pi\Z$. 
We have that
$K^{[q]} = \left\{
\begin{psmallmatrix}
\pm 1 &  & \\
 & \pm 1 & \\
 &    & \pm 1
\end{psmallmatrix}
\in \SO(3)
\right\}$ is disconnected with 4 components. 
% $(1 \times \SO(2)) \cup Q(1 \times \SO(2))$, where 
% $Q =
% \begin{psmallmatrix}
% -1 &  & \\
%  &  -1 & \\
%  &    & 1
% \end{psmallmatrix}$ restricts to a reflection in the $yz$-plane, thus
% $K^{[q]} = \{I, P, Q, PQ\}$ is disconnected. 
%
%Let $\pi: S^2 \to \R P^2$, $\pi(v) = \pm v$, be the canonical projection onto the
%real projective $2$-space. $\R P^2 = \SO(3)/(\SO(2) \cup (-I)\SO(2))$ is a symmetric space, where $K = \SO(2) \cup (-I)\SO(2)$ is the isotropy of $[p] = (0,0,\pm 1)$ and $I$ is the $3\times 3$ identity matrix.  
%The exponential at $[p]$ is $\pi \circ \exp_p$, so that the infinitesimal setup is the same as the $2$-sphere, now with lattice $\Gamma = \pi\Z$. Let $q = (1,0,0)$ then $K^{[q]} = \{I, -I\}$ is disconnected. 
%
It follows that $\exp_{[p]}^{-1}([q])$ is the union focal orbits $\pm (\pi/2 + k\pi)$, $k \in \Z$, where each focal orbit has 2 components.
By item (2) of Theorem A, these connected components can also be obtained from the corresponding Weyl groups: since $\pi/2$ does not cross the diagram $\mathcal{S} = \pi \Z$, we have $W^{[q]}_0 = 1$, and since $-\pi/2 = \pi/2 - \pi$, it follows that $-1$ centralizes $\pi/2$ modulo the lattice $\Gamma = \pi\Z$, so that $W^{[q]} = \{\pm 1\}$.
\end{example}

\begin{example}(rank 2)
\label{ex:posto 2}
Let $S = \mathrm{Gr}_2(\R^4)$ be the Grassmanian of planes in $\R^4$, denote by $[u,v] \in S$ the plane spanned by two l.i.\ vectors $u,v \in \R^4$ and by $e_1, e_2, e_3 , e_4$ the canonical basis of $\R^4$.
% confunde com notação de colchete ou fica claro do contexto?
$S$ is a symmetric space of $\SO(4)$ with the canonical biinvariant metric and orthogonal involution given by conjugation by $I_{2,2} = \begin{psmallmatrix}
I & 0 \\
0 & -I \\
\end{psmallmatrix}$, which has fixed point set $K = \mathrm{S}(\mathrm{O}(2)\times\mathrm{O}(2))$, precisely the isotropy of $p = [e_1, e_2]$ in $\SO(4)$.

We have that $\k = \left\{ \begin{psmallmatrix}
A & 0 \\
0 & B \\
\end{psmallmatrix}:\, A, B \in \so(2) \right\}$, 
thus its orthocomplement is
$\s = \left\{ \begin{psmallmatrix}
0 & -C^t \\
C & 0 \\
\end{psmallmatrix}:\, C \in \gl(2) \right\}$, 
with maximal torus 
$\t = \left\{ \begin{psmallmatrix}
0 & -D^t \\
D & 0 \\
\end{psmallmatrix}:\, D = \begin{psmallmatrix} \theta_1 & \\ & \theta_2 \\ \end{psmallmatrix} \right\}$
depicted below as vectors $(\theta_1, \, \theta_2)$.
\begin{center}
    \def\svgwidth{10cm}
    %% Creator: Inkscape 1.0.1 (c497b03c, 2020-09-10), www.inkscape.org
%% PDF/EPS/PS + LaTeX output extension by Johan Engelen, 2010
%% Accompanies image file 'reticulado-legendado-2.pdf' (pdf, eps, ps)
%%
%% To include the image in your LaTeX document, write
%%   \input{<filename>.pdf_tex}
%%  instead of
%%   \includegraphics{<filename>.pdf}
%% To scale the image, write
%%   \def\svgwidth{<desired width>}
%%   \input{<filename>.pdf_tex}
%%  instead of
%%   \includegraphics[width=<desired width>]{<filename>.pdf}
%%
%% Images with a different path to the parent latex file can
%% be accessed with the `import' package (which may need to be
%% installed) using
%%   \usepackage{import}
%% in the preamble, and then including the image with
%%   \import{<path to file>}{<filename>.pdf_tex}
%% Alternatively, one can specify
%%   \graphicspath{{<path to file>/}}
%% 
%% For more information, please see info/svg-inkscape on CTAN:
%%   http://tug.ctan.org/tex-archive/info/svg-inkscape
%%
\begingroup%
  \makeatletter%
  \providecommand\color[2][]{%
    \errmessage{(Inkscape) Color is used for the text in Inkscape, but the package 'color.sty' is not loaded}%
    \renewcommand\color[2][]{}%
  }%
  \providecommand\transparent[1]{%
    \errmessage{(Inkscape) Transparency is used (non-zero) for the text in Inkscape, but the package 'transparent.sty' is not loaded}%
    \renewcommand\transparent[1]{}%
  }%
  \providecommand\rotatebox[2]{#2}%
  \newcommand*\fsize{\dimexpr\f@size pt\relax}%
  \newcommand*\lineheight[1]{\fontsize{\fsize}{#1\fsize}\selectfont}%
  \ifx\svgwidth\undefined%
    \setlength{\unitlength}{292.32583534bp}%
    \ifx\svgscale\undefined%
      \relax%
    \else%
      \setlength{\unitlength}{\unitlength * \real{\svgscale}}%
    \fi%
  \else%
    \setlength{\unitlength}{\svgwidth}%
  \fi%
  \global\let\svgwidth\undefined%
  \global\let\svgscale\undefined%
  \makeatother%
  \begin{picture}(1,0.65426884)%
    \lineheight{1}%
    \setlength\tabcolsep{0pt}%
    \put(0,0){\includegraphics[width=\unitlength,page=1]{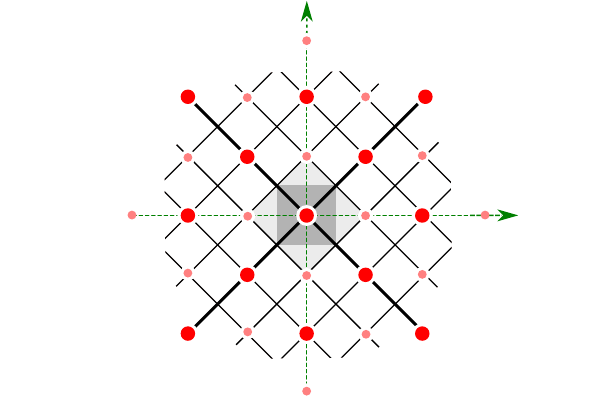}}%
    \put(0.81825213,0.33066534){\color[rgb]{0,0.50196078,0}\makebox(0,0)[lt]{\lineheight{1.25}\smash{\begin{tabular}[t]{l}$\theta_1$\end{tabular}}}}%
    \put(0.52063892,0.62827861){\color[rgb]{0,0.50196078,0}\makebox(0,0)[lt]{\lineheight{1.25}\smash{\begin{tabular}[t]{l}$\theta_2$\end{tabular}}}}%
    \put(0.56649205,0.33496947){\color[rgb]{0,0,1}\makebox(0,0)[lt]{\lineheight{1.25}\smash{\begin{tabular}[t]{l}$H$\end{tabular}}}}%
    \put(0.40742294,0.26188549){\color[rgb]{0,0,1}\makebox(0,0)[lt]{\lineheight{1.25}\smash{\begin{tabular}[t]{l}$H''$\end{tabular}}}}%
    \put(0,0){\includegraphics[width=\unitlength,page=2]{grassmann-lattice.pdf}}%
  \end{picture}%
\endgroup%

\end{center}
The simple restricted roots are $\alpha = \theta_1 - \theta_2$, $\beta = \theta_1 + \theta_2$, each with mutiplicity $1$ and, since these roots are orthogonal, this is a symmetric space of type $A_1 \times A_1$. The corresponding coroot vectors are $H^\vee_\alpha = (1,-1)$, $H^\vee_\beta = (1,1)$ with fundamental lattice $\Gamma_0 = \{ (\pi k, \pi l) \in \Gamma:$ $k,l \in \Z$ such that $k+l$ is even$\}$, depicted by the larger dots above.
The Stiefel diagram is then given by $\theta_1 - \theta_2$, $\theta_1 + \theta_2 \in \pi\Z$, depicted by the slanted lines above, where the root hyperplanes are depicted as thicker lines. Since $\exp_p(X) = e^X [e_1,e_2]$, for $X \in \s$, 
where
$$
\setlength\arraycolsep{0.1pt}
\begin{matrix}
  &
\begin{psmallmatrix}
 & & -\theta_1 & \\ 
 & & & -\theta_2 \\ 
\theta_1 & & & \\ 
 & \theta_2 & & \\
\end{psmallmatrix} \\
e &   \\
\end{matrix}
=
\begin{psmallmatrix}
\cos(\theta_1) & \hspace{10pt}  & -\sin(\theta_1) & \hspace{10pt} \\ 
 & 1 &  &  \\ 
\sin(\theta_1) & \hspace{10pt} & \cos(\theta_1) &  \hspace{10pt} \\
 &  &  & 1 \\ 
\end{psmallmatrix}
\begin{psmallmatrix}
 1 &  &  &  \\ 
\hspace{10pt}  & \cos(\theta_2) & \hspace{10pt}  & -\sin(\theta_2) \\ 
 &  & 1 &  \\ 
\hspace{10pt}  & \sin(\theta_2) & \hspace{10pt} & \cos(\theta_2) \\
\end{psmallmatrix}
$$
it follows that the lattice of $S$ is $\Gamma = \pi( \Z \times \Z)$, depicted by the dots above.
The Weyl group $W$ is given by either the symmetries of the dark grey square above, with vertices $(\pm \frac{\pi}{2}, \pm \frac{\pi}{2})$, which is the closure of the Dirichilet domain $\mathcal{D}$ of $\Gamma$.
By Theorem \ref{thm:main} it follows that $\pi_1(S) = \Gamma/\Gamma_0 = \Z_2$.

Let $q = [e_3,e_4]$ and note that $q = \exp_p(H)$, for $H = (\frac{\pi}{2},\, \frac{\pi}{2}) \in \t$. By inspecting the diagram, it follows that $W^q_0 = W$, which implies that $W^q = W$, so that all focal orbits $\mathcal{F}(H+\gamma)$ are connected.  
Furthermore, note that each $H + \gamma$ crosses either one or two nonroot hyperplanes, thus  $\mathcal{F}(H+\gamma)$ is connected with dimension 1 or 2. Note that $K^q = K$, thus the focal orbits $\mathcal{F}(H+\gamma)$ are flag manifolds of $S$. Since $S$ is of type $A_1 \times A_1$, it follows that $\mathcal{F}(H+\gamma)$ is a product of projective lines whenever both $\alpha(H+\gamma)$, $\beta(H+\gamma)$ are not zero, and a projective line if one of them is zero. It follows that $\exp_p^{-1}(q)$ is a disjoint union of 1-spheres passing through the non-regular elements of $H + \Gamma$ and tori passing through the regular elements of $H + \Gamma$. Since $\gamma = (\pi k,\, \pi l)$, $k,l \in \Z$, note that $H + \gamma = (\frac{\pi}{2}(2k + 1),\, \frac{\pi}{2}(2l + 1))$. 

Note that $H$ lies in the boundary of $\mathcal{D}$ and that among the $\Gamma$-equivalents to $H$ with the same norm, 
both $H$ and $r_\beta(H) = -H$ lie in the focal orbit $\mathcal{F}(H)$, while both $H' = (-\frac{\pi}{2},\, \frac{\pi}{2})$ and $r_\alpha(H') = (\frac{\pi}{2},\, -\frac{\pi}{2})$ do not. Thus the minimal geodesics from $p$ to $q$ in $S$ are given by the disjoint union of $\mathcal{F}(H)$ and $\mathcal{F}(H')$, a disjoint union of projective lines, 
%each one a symmetric space by itself, 
%illustrating Theorem \ref{thm:minimal-focal}.
%Since $\pi_1(S) = \Gamma/\Gamma_0 = \Z_2$, this shows that the second part of Theorem \ref{thm:minimal-focal}
%Proposition \ref{prop:geodes-minimais} 
%only holds for simply connected $S$.

Now let $q''=[e_3,e_2]$ and note that $q'' = \exp_p(H'')$, for $H'' = (-\frac{\pi}{2},0) \in \t$. By inspecting the diagram, it follows that $W_0^q = 1$ and $W^q = $ reflection around the $\theta_2$ axis, thus all focal orbits $\mathcal{F}(H''+\gamma)$ have two connected components. 
Furthermore, note that none $H'' + \gamma = (\pi k,\, \frac{\pi}{2}(2l + 1))$
crosses root hyperplanes, thus $\mathcal{F}(H''+\gamma) = (\pi k,\, \pm\frac{\pi}{2}(2l + 1))$. If follows that $\exp_p^{-1}(q'')$ is a disjoint union of $0$-spheres in $H'' + \Gamma$.

The universal cover of $S$ is the Grassmanian of oriented planes $\wt{S} = \wt{\mathrm{Gr}}_2(\R^4)$, which is a symmetric space with the same setup as above, except for the isotropy $K = \SO(2) \times \SO(2)$ of the oriented plane $p=\wt{[e_1,e_2]}$ and the lattice, which is now the fundamental lattice $\Gamma_0$. Now all focal orbits are connected (by 
Theorem \ref{thm:simply-connected-introd}),
%Corollary \ref{corol:simply-connected}), 
with circles (or products) taking place of projective lines (or products).
Another way to obtain this is to use that $\wt{S}$ is isometric to product of $2$-spheres, since it is simply connected of type $A_1 \times A_1$, and use Example \ref{ex:rank1}.
Note that $H$ also lies in the boundary of the Dirichilet domain $\mathcal{D}_0$ of $\Gamma_0$, given by the light grey square above, with vertices $(\pm \pi, 0)$, $(0,\pm \pi)$.  
Now the $\Gamma_0$-equivalents to $H$ with the same norm are only 
$H$ and $r_\beta(H) = -H$, which lie in the focal orbit $\mathcal{F}(H)$. Thus, the minimal geodesics from $p$ to $q$ in $\wt{S}$ are given by the connected focal orbit $\mathcal{F}(H)$, a circle.
%, which is itself a symmetric space, illustrating the second part of Theorem \ref{thm:minimal-focal}.
%Proposition \ref{prop:geodes-minimais}.

For $n > 4$, the Grassmannian $\mathrm{Gr}_2(\R^n)$ is a symmetric space of rank 2 with setup analogous to the above, except that now $\theta_1, \theta_2$ are also positive roots, so that it is irreducible of type $B_2$, with additional vertical and horizontal lines $\theta_1, \theta_2 \in \pi\Z$ in the above Stiefel diagram. 
In particular, its universal cover $\wt{\mathrm{Gr}}_2(\R^n)$ is not a product. 
%
% Note that the Dirichilet domain ${\mathcal D}_0$ remains the same while the isotropy of $p = \wt{[e_1,e_2]}$ is $K = SO(2) \times SO(n-2)$ and we have simple roots $\alpha = \theta_1-\theta_2$, $\delta = \theta_2$. The highest root is $\phi = \alpha + 2\delta = \theta_1 + \theta_2$ so that $\alpha$ is minuscle and $\beta$ is not. The corresponding dual basis is $\omega = (1,0)$, $\chi = (1,1)$. Note in the diagram above that $\pi\omega$ belongs to the boundary of ${\mathcal D}_0$, while $\pi\chi$ does not.  
% Note also that $q = \exp_p(\pi\omega) = \wt{[-e_1,e_2]}$, so that $K^q = K$. It follows the minimal integral orbit ${\mathcal F}(\pi\omega)$ is the $K$-orbit of $\pi\omega$, which is a symmetric space isometric to the cominuscle flag manifold $\F_{\alpha}$, a circle, illustrating Theorem \ref{thm:symmetric-flag}.

\end{example}

% FECHO DA CAMARA DE WEYL PODE AJUDAR A DESCREVER A IMAGEM INVERSA?
\begin{example}
\label{ex:grupos}
Taking care to divide by 2, the above symmetric space results for $H/2$ generalize the compact Lie group results of \cite{SS18} for $H$.

Indeed, a compact connected Lie group $U$ with a biinvariant metric is a symmetric space of $U \times U$ acting on itself by left translations and acting on $U$ by left and right isometries $(g,h) \cdot u = g u h^{-1}$.
The isotropy $K$ of the basepoint $p=1 \in U$ is given by $gh^{-1} = 1$ so that $K = \{(u,u):~u\in U\}$ is the diagonal of $U$, which is the fixed point set of the involution $\sigma(g,h) = (h,g)$. Note that the action of the isotropy $K$ in $U$ is given by conjugation. Thus, the isotropy $K^h$ of $h \in U$ is the diagonal of the centralizer $U_h$ of $h$.
The equivariant diffeomorphism that maps $S = U \times U/K$ to $U$ intertwining the respective actions is given by the slope map $\pi((g,h)K) = gh^{-1}$. 

The $(-1)$-eigenspace of the involution on $\u \times \u$ is $\s = \{ (X, -X):\, X \in \u \}$. Let $\t$ be a maximal torus of $\u$, then $\a = \{ (H, -H):\, H \in \t \}$ is a maximal torus of $\s$. 
The differential of $\pi$ at $K$ maps $\s \to \u$, and is given by $$(X,-X) \mapsto 2X$$
It maps the roots of $S$ to the roots of $U$, the lattice $\Gamma$ of $S$ to the unit lattice $\Gamma$ of $U$, the diagram of $S$ to the diagram of $U$. It also maps the isotropy action of $K$ in $\s$ to the adjoint action of $U$ in $\u$, thus the Weyl group action of $S$ to the Weyl group action of $U$ and the isotropy action of $K^h$ in $\s$ to the adjoint action of the centralizer $U_h$ in $\u$. 
Finally, the focal orbit $K^h (H,-H) \subseteq \s$ gets mapped to the centralizer orbit $U_h (2H) \subseteq \u$.

Note that the unit quaternions $S^3$ as a Riemannian manifold with induced metric from $\R^4$ has $I(S^3)_0 = \mathrm{SO}(4)$. As a compact group with the biinvariant metric, thus as a symmetric space, it also has $I(S^3)_0 = \mathrm{SO}(4)$, since $S^3$ has an $S^3 \times S^3$ action by isometries, as above, thus a homomorphism $S^3 \times S^3 \to I(S^3)_0 = \mathrm{SO}(4)$ which is surjective since it is, in fact, the universal covering of $\mathrm{SO}(4)$.
\end{example}

\section{Preliminaries}\label{sec:prelim}

Let us recall the definitions, notations and basic results on compact symmetric spaces and their Lie algebras (see for example \cite{borel}, \cite{helgason} or \cite{sakai}). Since many of the standard results in the literature are for symmetric spaces of compact type, thus without euclidean factor, we take care to extend them to compact symmetric spaces, whenever needed.

A compact Riemannian symmetric pair $(U,K)$ consists of: a compact connected Lie group $U$, a closed subgroup $K$, a $U$-invariant Riemannian metric on $S = U/K$ and an involutive automorphism $\sigma$ of $U$ such that $K$ lies between $U^\sigma_0$ and $U^\sigma$, where $U^\sigma$ is the subgroup of fixed points of $\sigma$.
%an involutive automorphism $\sigma$ of $U$ which leaves $K$ invariant and is such that $U^\sigma_0 = K_0$, where the $U^\sigma$ is the subgroup of fixed points of $\sigma$.
For short, we will call this a {\em compact symmetric space}. 
Denote by $\u$, $\k$ the Lie algebras of $U$, $K$, respectively, and again by $\sigma$ the involutive automorphism of $\u$ given by the differential at the identity of the automorphism of $U$.  Then $(\u, \k)$ is a compact orthogonal symmetric pair corresponding to $(U,K)$, where $\k = (+1)$-eigenspace of $\sigma$. Let $\s = (-1)$-eigenspace of $\sigma$, then $\u = \k \oplus \s$ is an orthogonal decomposition. For short, we will call this a {\em compact symmetric Lie algebra}.
We may identify the the tangent space $T_pS$ with $\s$ via the canonical projection $\pi: U \to S$, where $p = \pi(1)$ is the canonical basepoint.  

In the following we shall fix in $\u$ an $\Ad(U)$-invariant inner product. Such inner product is essentially unique in the semisimple factor of $\u$ and is arbitrary on the center of $\u$. We then restrict it to $\s$, obtaining an $\Ad(K)$-invariant inner product which extends uniquely to a $U$-invariant Riemannian metric on $S = U/K$.  Then the Riemannian exponential map of $S$ at $p$ is then given by
\begin{equation}
\label{eq-exp-riemanniana-S}
    \exp_p: \s \to S, \qquad X \mapsto \exp(X)p = \pi(\exp(X))
\end{equation}
where $\exp: \u \to U$ denotes the exponential mapping of the Lie group $U$. 
\subsection{Restricted roots}

Let $\t \subseteq \s$ be a maximal (flat) torus. They are all conjugate by the action of $\mathrm{Int}_\u(\k)$ in $\s$. To a real functional $\alpha \in \t^*$ corresponds the restricted root space
\begin{equation}
\label{def-restricted-root-space}
\u_\alpha = \{ X \in \u_\C: \, 
[H,X] = i \alpha(H) X,\, H \in \t 
\}
\end{equation}
where $i = \sqrt{-1}$ and we do not introduce the $2\pi$ factor used in \cite{sakai} and \cite{SS18}.
The (restricted) roots of $(\u,\k)$ are given by $\Pi = \{ \alpha \in \t^*: \, \u_\alpha \neq 0 \}$ with corresponding restricted root space decomposition of $\u_\C$ given by
\begin{equation}
\label{eq:centralizador-u}
\u_\C = \u_0 \oplus \sum_{\alpha \in \Pi} \u_\alpha
\end{equation}
The Stiefel diagram of the symmetric space is the union of the affine hyperplanes
\begin{equation}
\label{def-diagrama-esp-sim}
\mathcal{S} = \{ H \in \t: \, \alpha(H) \in \pi\Z,\text{ for some } \alpha \in \Pi \}
\end{equation}
where the root hyperplanes are the hyperplanes through the origin.

The set of regular elements $\t_{\mathrm{reg}}$ is the complement of the Stiefel diagram in $\t$.
A connected component of the complement of $\t_{\mathrm{reg}}$ is an alcove, and a connected component of the complement of the root hyperplanes is a Weyl chamber. The Weyl group acts simply transitively on the Weyl chambers. 

Fix a Weyl chamber $\t^+$ and corresponding positive roots $\Pi^+$ and simple roots $\Sigma \subseteq \Pi^+$.  For $\alpha \in \Pi^+$ let
$\k_\alpha = \k \cap (\u_\alpha \oplus \u_{-\alpha})$,
$\s_\alpha = \s \cap (\u_\alpha \oplus \u_{-\alpha})$, and let $\m = $ centralizer of $\t$ in $\k$, then
\begin{equation}
\label{eq:root-decompos-ks}
\k = \m \oplus \sum_{\alpha \in \Pi} \k_\alpha \qquad\qquad
\s = \t \oplus \sum_{\alpha \in \Pi} \s_\alpha
\end{equation}
where $\dim \k_\alpha = \dim \s_\alpha = m_\alpha$, the multiplicity of $\alpha$. Note that $\u_0 = \m \oplus \t$.

% \subsection{Flag manifolds}
% \label{sec:flag}
% A (generalized) flag manifold is the isotropy orbit of a compact symmetric space $S = U/K$, thus a $K$-orbit of a $H \in \s$ by the adjoint action, denoted by $\F_H = \Ad(K)H$. Note that flag manifolds are preserved under duality between compact and noncompact symmetric spaces.

% Since $K = K_0 M$ (see Proposition \ref{propos:m-cc-k}) it follows that $\F_H = \Ad(K_0) H$, which coincides with the corresponding $\Int_\u(\k)$-orbit, showing that a flag manifold is determined by infinitesimal data, which can be encoded in root data as follows. Assume w.l.o.g.\ that $H \in \mathrm{cl}(\a^+)$, then $\F_H$ is completely described by the subset of simple roots $\Theta \subseteq \Sigma$ that annihilate $H$. Indeed, let $\t_\Theta$ be the annihilator in $\t$ of the roots in $\Theta$, then the isotropy $K_H$ equals the centralizer $K_\Theta$ of $\t_\Theta$ in $K$. It follows that
% $$
% \F_\Theta = K/K_\Theta
% $$
% is defined only in terms of roots and is $K$-equivariantly diffeomorphic, thus isometric, to $\F_H$. An inclusion $\Theta \subseteq \Theta' \subseteq \Sigma$ induces a natural projection $\F_\Theta \to \F_{\Theta'}$. Thus, among all flag manifolds $\F_\Theta$, the {\em maximal flag manifold } $\F = \F_\emptyset$ projects onto every other, and every flag manifold projects to the point $\F_\Sigma$. A {\em minimal flag manifold} is given by $\F_{\alpha} = \F_{\Sigma - \{\alpha\}}$,
% where $\alpha$ is a simple root.

\subsection{Centralizers and the center}

Restricted roots of the compact symmetric Lie algebra $(\u, \k)$ are related to the roots of the compact Lie algebra $\u$ as follows. Let $\widetilde{\t}$ be maximal torus of $\u$ which contains $\t$.  To a real functional $\widetilde{\alpha} \in \widetilde{\t}^*$ corresponds the root space
$$
\widetilde{\u}_{\widetilde{\alpha}} = \{ X \in \u_\C: \, 
[H,X] =i \widetilde{\alpha}(H) X,\, H \in \widetilde{\t}
\}
$$
The roots of $\u$ are given by $\widetilde{\Pi} = \{ \widetilde{\alpha} \in \widetilde{\t}^*: \, \widetilde{\u}_{\widetilde{\alpha}} \neq 0 \}$ and we have the  root space decomposition
$$
\u_\C = \widetilde{\u}_0 \oplus \sum_{\widetilde{\alpha} \in \widetilde{\Pi}} \widetilde{\u}_{\widetilde{\alpha}}
$$
Since the adjoint representation of $\t$ in $\u_\C$ is the restriction of the adjoint representation of $\widetilde{\t}$, it follows that the restricted root spaces of $(\u, \k)$ split as a sum of roots spaces of $\u$ so that, for $\alpha \in \Pi$, we have
$$
\u_\alpha = \sum_{\widetilde{\alpha}|_\t = \alpha}
\widetilde{\u}_{\widetilde{\alpha}}
$$
Let $\widetilde{\Pi}_0 = \{ \widetilde{\alpha} \in \widetilde{\Pi}: \, \widetilde{\alpha}|_\t = 0 \}$, it follows that 
$\Pi = \{ \widetilde{\alpha}|_\t:\, \widetilde{\alpha} \in \widetilde{\Pi} - \widetilde{\Pi}_0 \}$.
Furthermore, a Weyl chamber of $\t$ can be chosen with corresponding positive roots $\widetilde{\Pi}^+$ so that $\Pi^+ = \{ \widetilde{\alpha}|_\t:\, \widetilde{\alpha} \in \widetilde{\Pi}^+ - \widetilde{\Pi}_0  \}$.

For $H \in \t$, we have that $\ad(H)$ is diagonal in $\u_\C$ with $0$-eigenspace $\widetilde{\u}_0$ and $i\alpha(H)$-eigenspace $\u_\alpha$, $\alpha \in \Pi$. For $h = \exp(H)$ we have that $\Ad(h) = e^{\ad(H)}$ is diagonal in $\u_\C$ with $1$-eigenspace $\widetilde{\u}_0$ and $e^{i\alpha(H)}$-eigenspace $\u_\alpha$, $\alpha \in \Pi$.
Thus, by \eqref{eq:root-decompos-ks} the centralizers 
$$
\k_H = \{ X \in \k: \ad(H)X = 0 \}
\subseteq
\k_h = \{ X \in \k: \Ad(h)X = X \}
$$
decompose as 
\begin{equation}
\label{eq-algebra-centralizador}
\k_H = \m \oplus \sum_{\alpha(H) = 0} \k_\alpha
\qquad
\k_h = \m \oplus \sum_{\alpha(H) \in 2\pi\Z} \k_\alpha
\end{equation}
where $\alpha$ ranges through the positive restricted roots.

Since $\u$ compact is reductive, it splits as the direct sum
\begin{equation}
\label{eq:centro}
\u = \g \oplus \z(\u)
\end{equation}
where $\g = [\u,\u]$ is compact semi-simple and $\z(\u)$ the center of $\u$. 
The compact symmetric Lie algebra $(\u, \k)$ is said to be of {\em compact type} when $\z(\u) = 0$. Note that both $\z(\u)$ and $\g$ are $\sigma$-invariant, since they are characteristic ideals.
%, and that $\g$ is $\sigma$-invariant, since $\sigma(\g) = [\sigma(\u),\sigma(\u)] = \g$. 
Thus
\begin{eqnarray}
\g = (\k \cap \g) \oplus (\s \cap \g) \,\,\,\,\,\,& \qquad &
\z(\u) = (\k \cap \z(\u)) \oplus (\s \cap \z(\u)) \\
\label{eq:k-s-g}
\k = (\k \cap \g) \oplus (\k \cap \z(\u)) & \qquad &
\s = (\s \cap \g) \oplus (\s \cap \z(\u)) \\
&& \t = (\t \cap \g) \oplus (\t \cap \z(\u))
\end{eqnarray}

Let $\wh{\t} = \t \cap \g$. The rank of the compact symmetric Lie algebra $(\u, \k)$, hence of the symmetric space $U/K$, is $\dim \wh{\t}$. 
Restricting $\sigma$ to $\g$ we get a symmetric Lie algebra of compacty type with maximal torus $\wh{\t}$. Since the roots $\Pi$ of $\u$ annihilate $\t \cap \z(\u)$, it follows that the roots of $\g$ are the roots of $\u$ restricted to $\wh{\t}$. 
It also follows that $\mathcal{S} = (\mathcal{S} \cap \wh{\t}\, ) \oplus (\t \cap \z(\u))$ so that the diagram hyperplanes are, in fact, the sum of hyperplanes in $\wh{\t}$ with $\t \cap \z(\u)$.  

To each restricted root $\alpha$ corresponds the dual root vector $H_\alpha \in \wh{\t}$ such that $\prod{H_\alpha, H} = \alpha(H)$, for all $H \in \wh{\t}$. The corresponding coroot vector is $H^\vee_\alpha = 2H_\alpha/\langle{\alpha, \alpha}\rangle$, note that $\alpha(H^\vee_\alpha) = 2$. 

% -MAURO: É IMPORTANTE COLOCAR REFERÊNCIAS PARA RESULTADOS QUE NÃO DEMONSTRAMOS

% -LUCAS: COLOCADO

% -MAURO: E O QUE ACONTECE COM O PI NO ITEM IV?

% -LUCAS: ERA A NORMALIZAÇÃO DO SAKAI, ATUALIZEI PARA A NOSSA NORMALIZAÇÃO

\begin{lemma}(Lemma 1.1 of \cite{sakai})
\label{lemma:sakai}
For $\widetilde{\alpha} \in \widetilde{\Pi}^+ - \widetilde{\Pi}_0$, there exist $X_{\widetilde{\alpha}} \in \k$, $Y_{\widetilde{\alpha}} \in \s$ such that:
\begin{enumerate}[(i)]
    \item $\{ X_{\widetilde{\alpha}}:\, \widetilde{\alpha}|_\t = \alpha \}$ is a basis of $\k_\alpha$, $\{ Y_{\widetilde{\alpha}}:\, \widetilde{\alpha}|_\t = \alpha \}$ is a basis of $\s_\alpha$, for $\alpha \in \Pi$.  
    
    Let $\widetilde{\alpha}|_\t = \alpha$, then 
    \item $[H,X_{\widetilde{\alpha}}] = \alpha(H) Y_{\widetilde{\alpha}}, \quad 
    [H,Y_{\widetilde{\alpha}}] = -\alpha(H) X_{\widetilde{\alpha}}, \quad$ for $H \in \t$.
    % \item 
    % $\Ad(\exp(H)) X_{\widetilde{\alpha}} = \,\,\,\,\,\cos(\alpha(H)) X_{\widetilde{\alpha}}
    % +\sin(\alpha(H)) Y_{\widetilde{\alpha}}$,\\
    % $\Ad(\exp(H)) Y_{\widetilde{\alpha}} = -\sin(\alpha(H))X_{\widetilde{\alpha}}
    % +\cos(\alpha(H))Y_{\widetilde{\alpha}}$.
    \item 
    $[X_{\widetilde{\alpha}},Y_{\widetilde{\alpha}}] = H^\vee_\alpha/2$
\end{enumerate}
\end{lemma}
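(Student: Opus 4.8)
\emph{Plan of proof.} Since the restricted roots vanish on $\z(\u)$, all root spaces $\u_{\wt\alpha},\u_\alpha$ lie in $\g_\C=[\u,\u]_\C$ and $H^\vee_\alpha\in\wh\t=\t\cap\g$; so we may assume $\u=\g$ is semisimple. Work in $\u_\C$ with the conjugation $\tau$ fixing $\u$ and the complex-linear extension of $\sigma$, which commute because $\sigma$ is defined over $\R$. Note $\wt\t$ is automatically $\sigma$-stable — it is a maximal torus of $\z_\u(\t)=\m\oplus\t$, with $\sigma=+\mathrm{id}$ on $\m\subseteq\k$ and $\sigma=-\mathrm{id}$ on $\t\subseteq\s$ — hence $\sigma$ permutes $\wt\Pi$ with $(\sigma\wt\alpha)|_\t=-\wt\alpha|_\t$; in particular $\sigma\wt\alpha\ne\wt\alpha$ and $\sigma\u_\alpha=\u_{-\alpha}=\tau\u_\alpha$ for $\wt\alpha\in\wt\Pi-\wt\Pi_0$.

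The plan is to produce the $X_{\wt\alpha}$ by projecting real root vectors onto $\k$. Fix a Weyl basis $\{E_{\wt\alpha}\}$ of $\u_\C$ with $\tau E_{\wt\alpha}=-E_{-\wt\alpha}$, $[E_{\wt\alpha},E_{-\wt\alpha}]$ a prescribed multiple of $H_{\wt\alpha}$, and — by the standard sign-adjustment argument, available because $\sigma$ permutes the roots and commutes with $\tau$ — also $\sigma E_{\wt\alpha}=\pm E_{\sigma\wt\alpha}$. For $\wt\alpha\in\wt\Pi^+-\wt\Pi_0$ and $\alpha=\wt\alpha|_\t$, the real root vectors $E_{\wt\alpha}-E_{-\wt\alpha}$ and $i(E_{\wt\alpha}+E_{-\wt\alpha})$ both lie in $\u\cap(\u_\alpha\oplus\u_{-\alpha})=\k_\alpha\oplus\s_\alpha$; let $X_{\wt\alpha}$ be a scalar multiple of the $\k$-component of whichever one is nonzero (the choice and the scalar depending on the sign of $\sigma$ on $\wt\u_{\wt\alpha}$ and on whether $\sigma\wt\alpha=-\wt\alpha$), and set $Y_{\wt\alpha}:=\tfrac12[H^\vee_\alpha,X_{\wt\alpha}]$ — the element forced by (ii). (For $\alpha\in\Pi^-$ extend formally by $X_{-\wt\alpha}:=-\tau X_{\wt\alpha}$, etc.)

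Property (ii) is then essentially automatic: on $\u_\alpha\oplus\u_{-\alpha}$ one has $\ad(H)=\tfrac12\alpha(H)\ad(H^\vee_\alpha)$ for every $H\in\t$ (both sides act by $\pm i\alpha(H)$, using $\alpha(H^\vee_\alpha)=2$), so $[H,X_{\wt\alpha}]=\alpha(H)Y_{\wt\alpha}$, and since $\ad(H^\vee_\alpha)^2=-4$ there, $[H,Y_{\wt\alpha}]=-\alpha(H)X_{\wt\alpha}$. Moreover $\tfrac12\ad(H^\vee_\alpha)$ maps $\k_\alpha$ isomorphically onto $\s_\alpha$, so the $Y_{\wt\alpha}$ form a basis of $\s_\alpha$ once the $X_{\wt\alpha}$ form one of $\k_\alpha$; as there are exactly $m_\alpha=\dim\k_\alpha$ of them, (i) comes down to the linear independence of $\{X_{\wt\alpha}:\wt\alpha|_\t=\alpha\}$. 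I would get this by projecting a hypothetical real relation $\sum c_{\wt\alpha}X_{\wt\alpha}=0$ onto $\u_\alpha$ along $\u_{-\alpha}$ (legitimate since $\alpha\ne0$) and comparing coefficients on the basis $\{E_{\wt\alpha}\}$: the involution $\wt\alpha\mapsto-\sigma\wt\alpha$ of $\{\wt\alpha:\wt\alpha|_\t=\alpha\}$ yields $c_{\wt\alpha}=\pm c_{-\sigma\wt\alpha}$, and the reality of the $c_{\wt\alpha}$ together with the fixed-point/$2$-cycle structure of that involution forces all of them to vanish.

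For (iii): $[X_{\wt\alpha},Y_{\wt\alpha}]$ lies in $[\k,\s]\subseteq\s$ and, by the Jacobi identity and the two relations of (ii), commutes with $\t$, hence lies in $\z_\s(\t)=\t$; pairing with $H\in\t$ and using $\Ad$-invariance gives $\langle[X_{\wt\alpha},Y_{\wt\alpha}],H\rangle=\alpha(H)|Y_{\wt\alpha}|^2$, so $[X_{\wt\alpha},Y_{\wt\alpha}]=|Y_{\wt\alpha}|^2H_\alpha$; since $\tfrac12\ad(H^\vee_\alpha)$ is an isometry on $\u_\alpha\oplus\u_{-\alpha}$ we have $|Y_{\wt\alpha}|=|X_{\wt\alpha}|$, so the desired $[X_{\wt\alpha},Y_{\wt\alpha}]=H^\vee_\alpha/2=H_\alpha/\langle\alpha,\alpha\rangle$ is exactly the normalization $|X_{\wt\alpha}|^2=\langle\alpha,\alpha\rangle^{-1}$. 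I expect the one genuinely delicate point to be producing a single Weyl basis that is simultaneously $\tau$-compatible, $\sigma$-compatible (each $\sigma E_{\wt\alpha}$ a \emph{real} multiple of $E_{\sigma\wt\alpha}$), and normalized so that the $\k$-components $X_{\wt\alpha}$ have this uniform length across all $\wt\alpha$ restricting to a fixed $\alpha$ — which forces one to treat the cases $\sigma\wt\alpha=-\wt\alpha$ and $\sigma\wt\alpha\ne-\wt\alpha$ with different scalar factors.
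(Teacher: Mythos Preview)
The paper does not prove this lemma: it is quoted verbatim as Lemma~1.1 of \cite{sakai} and used without argument, so there is no ``paper's own proof'' to compare against. Your outline is the standard route (and is essentially what Sakai does): choose a Weyl basis of $\u_\C$ adapted simultaneously to the compact real form $\tau$ and to the involution $\sigma$, take $\k$- and $\s$-projections of the real root vectors, and fix the normalization so that $[X_{\wt\alpha},Y_{\wt\alpha}]=H^\vee_\alpha/2$. Your treatment of (ii) and (iii) is correct --- in particular the observation that $[X_{\wt\alpha},Y_{\wt\alpha}]\in\t$ and the invariance computation $\langle[X_{\wt\alpha},Y_{\wt\alpha}],H\rangle=\alpha(H)\,|Y_{\wt\alpha}|^2$ pin down the bracket up to the length normalization --- and you have correctly located the one genuinely fiddly point, namely arranging $\sigma E_{\wt\alpha}=\pm E_{\sigma\wt\alpha}$ with \emph{real} sign and then splitting into the cases $\sigma\wt\alpha=-\wt\alpha$ versus $\sigma\wt\alpha\neq-\wt\alpha$ to get a uniform $|X_{\wt\alpha}|^2=\langle\alpha,\alpha\rangle^{-1}$. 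The linear-independence argument for (i) via projection onto $\u_\alpha$ and the involution $\wt\alpha\mapsto-\sigma\wt\alpha$ on the fibre over $\alpha$ is also sound. (Incidentally, the second formula in item~(ii) of the statement is a typo in the paper: it should read $[H,Y_{\wt\alpha}]=-\alpha(H)X_{\wt\alpha}$, which is what you prove.)
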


In particular, since $\alpha(H^\vee_\alpha/2)=1$, to each positive restricted root $\alpha$ corresponds a subalgebra $\u(\alpha)$ of $\u$ spanned by $H^\vee_\alpha/2$, $Y_\alpha$, $X_\alpha$ with bracket relations
\begin{equation}
\label{eq:bracket-alpha}
[Y_\alpha, X_\alpha] = H^\vee_\alpha/2 \qquad 
[H^\vee_\alpha/2, Y_\alpha] = X_\alpha\qquad
[H^\vee_\alpha/2, X_\alpha] = Y_\alpha
\end{equation}

\subsection{Weyl group}\label{Weyl-group}

Consider the normalizer $M^*$ and the centralizer $M$ of $\t$ in $K$. The Weyl group of the symmetric space $U/K$ is then $W = M^*/M$. We can see an element $w \in W$ either as an element of $M^*$ acting by the adjoint action on $\t$ or as a linear isometry of $\t$. We have that $W$ also acts on the roots $\Pi$ by the coadjoint action $w^*\alpha(H)=\alpha(w^{-1}H)$.
By seeing an element of $W$ as an element of $M^*$ it follows at once that the lattice $\Gamma$ is invariant by the Weyl group. 
We have that $W$ is generated by simple reflections $r_\alpha$ around the root hyperplanes $\alpha=0$, where $\alpha$ varies in the simple roots and the reflection $r_\alpha$ of $\t$ around the hyperplane $\alpha = 0$ is given by $r_\alpha(H) = H - \alpha(H) H_\alpha^\vee$. Furthermore, the centralizer $W_H$  of $H$ in $W$ is generated by reflections $r_\alpha$ around the root hyperplanes which cross $H$, where $\alpha$ varies in the simple roots. In particular, if $H$ is regular then $W_H$ is trivial.

Regarding the maximal torus $\t$ as diagonal matrices, we have the following simultaneous diagonalization result: 
if $\t' \subseteq\s$ is a maximal torus, then there exists $k \in K_0$ such that $\Ad(k)\t' = \t$ (see Theorem II.2.3 of \cite{borel}).  In particular, 
\begin{equation}
\label{eq-diag}
\text{Given $X \in \s$ there exists $k \in K_0$ such that $\Ad(k)X \in \t$}
\end{equation}
Furthermore, for $H \in \t$ and $k \in K$, we have that 
\begin{equation}
\label{eq-lemma-well-known}
\text{If $\Ad(k) H \in \t$ then $\Ad(k)H = w H$ for some $w \in W$}
\end{equation}
which can be regarded as: a change of orthogonal base of a diagonal matrix which diagonalizes it again, can be attained by a permutation in the Weyl group (see Proposition VII.2.2 p.285 of \cite{helgason}, the proof applies equally to compact symetric spaces).

\subsection{Lattices}
\label{sec:lattices}

Consider the fundamental and the central lattices given, respectively, by
\begin{eqnarray}
\label{eq:central-fundamental-lattice}
    \Gamma_0 = \Z\text{-span } \{ \pi H^\vee_\alpha:\, \alpha \in \Pi \} \,\subseteq\,
    \Gamma_1 = \{ H \in \t:\, \alpha(H) \in \pi\Z,\, \alpha \in \Pi \}   
\end{eqnarray}
Actually, $\Gamma_0$ is a lattice of $\wh{\t}$. Furthermore, $\z(\u) \subseteq \Gamma_1$ so that $\Gamma_1$ may not be a lattice of $\t$, nevertheless, projecting onto $\wh{\t}$ parallel to $\t \cap \z(\u)$, we get 
\begin{eqnarray}
    \label{eq:lattice-1}
    \wh{\Gamma}_1 = \{ H \in \wh{\t}:\, \alpha(H) \in \pi\Z,\, \alpha \in \Pi \}   
\end{eqnarray}
which is a lattice of $\wh{\t}$, in fact is the lattice of the adjoint symmetric space $\wh{S}$ of $S$ obtained as follows. 

Note that the kernel of $\Ad: U \to \wh{U} = \Ad(U)$ is the center $Z(U)$, a characteristic subgroup, thus invariant by the involution $\sigma$, which then descends to the involution $\wh{\sigma}$ of $\wh{U}$ given by $\wh{\sigma}(\Ad(u)) = \Ad(\sigma(u))$.  Note that, since $\Ad(\exp(X)) = e^{\ad(X)}$ it follows that $\wh{\sigma}(e^{\ad(X)}) = e^{\ad(\sigma(X))}$. 
Let $\wh{K} = \wh{U}^{\wh{\sigma}}$ be the fixed point set of $\wh{\sigma}$, then $(\wh{U},\wh{K})$ is a compact Riemannian symmetric pair with correspoding symmetric space $\wh{S} = \wh{U}/\wh{K}$ and compact symmetric Lie algebra $\g$. Since $\Ad(K) \subseteq \wh{K}$, $\Ad$ descents to the projection
\begin{equation}
\label{eq:proj1}
S \to \wh{S} \qquad gK \mapsto \Ad(g)\wh{K}
\end{equation}
We have that $H \in \wh{\t}$ belongs to the lattice of $\wh{S}$ iff $e^{\ad(H)} \in \wh{K}$, thus iff
$$
\wh{\sigma}(e^{\ad(H)}) = e^{\ad(-H)} = e^{\ad(H)}
$$
thus iff $e^{\ad(2H)} = 1$. Since the eigenvalues of $e^{\ad(2H)}$ in $\g_\C$ are $1$ and $e^{2\alpha(H)i}$, for $\alpha$ varying in the roots of $\g$, it follows that 
the lattice of $\wh{S}$ is given by \eqref{eq:lattice-1}.

Recalling the lattice $\Gamma = \{ \gamma \in \t: \exp_p(\gamma) = p \}$ of $S$, the projection \eqref{eq:proj1} then implies that, projecting onto $\wh{\t}$ parallel to $\t \cap \z(\u)$, $\Gamma$ projects inside $\wh{\Gamma}_1$. Since the roots $\Pi$ of $\u$ annihilate $\t \cap \z(\u)$, it follows that $\Gamma \subseteq \Gamma_1$. Furthermore, we have the following.

\begin{proposition}
\label{propos:gamma-inv}
We have that $\Gamma_0 \subseteq \Gamma \subseteq \Gamma_1$. 
In particular, the Stiefel diagram $\mathcal{S}$ of $S$ is invariant by translations in the lattice $\Gamma$ of $S$, so that $\Gamma \subseteq \mathcal{S}$.
\end{proposition}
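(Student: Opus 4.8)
The plan is to take the inclusion $\Gamma\subseteq\Gamma_1$ as already granted, since it was obtained in the discussion preceding the statement by comparing $S$ with its adjoint symmetric space $\wh S$, and to concentrate on proving $\Gamma_0\subseteq\Gamma$. Since $\t$ is abelian, $\exp|_\t\colon\t\to U$ is a homomorphism and $\Gamma=(\exp|_\t)^{-1}(K)$ is a subgroup of $(\t,+)$; hence it is enough to show that $\exp(\pi H^\vee_\alpha)\in K$ for every $\alpha\in\Pi$, and I would carry this out inside the rank-one subalgebra $\u(\alpha)$ supplied by Lemma~\ref{lemma:sakai}.

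Fix $\alpha\in\Pi^+$ and recall that $\u(\alpha)=\mathrm{span}\{H^\vee_\alpha/2,\,Y_\alpha,\,X_\alpha\}\cong\so(3)$ with bracket relations \eqref{eq:bracket-alpha}, where $X_\alpha\in\k$ while $H^\vee_\alpha/2,\,Y_\alpha\in\s$. First I would check, directly from \eqref{eq:bracket-alpha}, that both $\ad(H^\vee_\alpha/2)$ and $\ad(X_\alpha)$ act on the three-dimensional compact simple algebra $\u(\alpha)$ with eigenvalues $0,\pm i$; consequently, in the connected (hence compact) subgroup $U(\alpha)\subseteq U$, the circle subgroups $t\mapsto\exp(tH^\vee_\alpha/2)$ and $t\mapsto\exp(tX_\alpha)$ have the property that $z_1:=\exp(\pi H^\vee_\alpha)=\exp(2\pi\cdot H^\vee_\alpha/2)$ and $z_2:=\exp(2\pi X_\alpha)$ both centralize $U(\alpha)$, i.e.\ lie in its centre. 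Next, since $U(\alpha)$ acts transitively by $\Ad$ on the unit sphere of $\u(\alpha)$ and $H^\vee_\alpha/2$ and $X_\alpha$ have the same length for the invariant form normalising \eqref{eq:bracket-alpha}, there is $g\in U(\alpha)$ with $\Ad(g)X_\alpha=H^\vee_\alpha/2$; conjugating $z_2$ by $g$ and using that $z_2$ is central in $U(\alpha)$ gives $z_1=z_2$. As $X_\alpha\in\k$, this yields $\exp(\pi H^\vee_\alpha)=\exp(2\pi X_\alpha)\in\exp(\k)\subseteq K$, hence $\pi H^\vee_\alpha\in\Gamma$; since $\Gamma$ is a subgroup, $\Gamma_0\subseteq\Gamma$ follows.

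For the remaining assertions: if $\gamma\in\Gamma_1$ then $\alpha(\gamma)\in\pi\Z$ for all $\alpha\in\Pi$, so $\alpha(H+\gamma)\equiv\alpha(H)\pmod{\pi\Z}$ for every $H\in\t$; thus $H\in\mathcal S\iff H+\gamma\in\mathcal S$, i.e.\ $\mathcal S$ is invariant under $\Gamma_1$-translations, and a fortiori under $\Gamma$-translations because $\Gamma\subseteq\Gamma_1$. In particular, any $\gamma\in\Gamma\subseteq\Gamma_1$ satisfies $\alpha(\gamma)\in\pi\Z$ for all $\alpha\in\Pi$, which is exactly the condition \eqref{def-diagrama-esp-sim} for $\gamma\in\mathcal S$ (the rank-zero torus being the trivial degenerate case).

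I expect the only genuine obstacle to be the normalisation bookkeeping in the second paragraph: one must confirm that the precise constants in \eqref{eq:bracket-alpha} make $t\mapsto\exp(tH^\vee_\alpha/2)$ and $t\mapsto\exp(tX_\alpha)$ circle subgroups of \emph{equal} period, so that ``going once around'' each produces literally the same central element of $U(\alpha)$, and — since $\Gamma$ must be controlled even for the smallest admissible $K=U^\sigma_0$ — that this element genuinely lies in the identity component of $K$. Routing the argument through $\exp(2\pi X_\alpha)\in\exp(\k)$, rather than merely through the $\sigma$-invariance of $\exp(\pi H^\vee_\alpha)$, is precisely what secures this last point.
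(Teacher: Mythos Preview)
Your argument is correct and follows the same overall strategy as the paper: reduce to the rank-one subgroup $U(\alpha)$ with Lie algebra $\u(\alpha)$ and show there that $\exp(\pi H^\vee_\alpha)=\exp(2\pi X_\alpha)\in\exp(\k)\subseteq K$. The technical execution differs slightly. The paper realises $\u(\alpha)$ as the image of an explicit Lie algebra isomorphism from the imaginary quaternions $\u'=\mathrm{span}\{i/2,j/2,k/2\}$, integrates it to a homomorphism $S^3\to U$ using the simple connectivity of $S^3$, and then reads off $\exp(\pi H^\vee_\alpha)=\phi(e^{\pi i})=\phi(-1)=\phi(e^{\pi k})=\exp(2\pi X_\alpha)$. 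You instead stay inside $U(\alpha)\subseteq U$ and argue that both $\exp(\pi H^\vee_\alpha)$ and $\exp(2\pi X_\alpha)$ are central in $U(\alpha)$ and $\Ad$-conjugate (since $H^\vee_\alpha/2$ and $X_\alpha$ lie on the same sphere for the Killing form of $\u(\alpha)$), hence equal. Your route avoids invoking simple connectivity and works directly in $U$; the paper's route is perhaps more concrete but needs the integration step. Both land on the same key identity, and your treatment of the ``in particular'' clause and of $\Gamma\subseteq\Gamma_1$ matches the paper's.
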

\begin{proof}
Let $U' = S^3$ be the unit sphere of the quaternions $\mathbb{H}$, whose Lie algebra is $\u' = \R i \oplus \R j \oplus \R k$ with bracket coming from the quaternionic product. Consider the orthogonal involution $\sigma'(q) = kqk^{-1}$, with fixed point subgroup $K' = $ unit circle of the $\R \oplus k\R$-plane, whose Lie algebra is $\k' = \R k$, complemented by $\s' = \R i \oplus \R j$. Take $H' = i/2$ in the maximal torus $\t' = \R i$ of $\s'$, whose exponential is the unit circle of the $\R \oplus i\R$-plane, thus its intersection with $K'$ is $\{\pm 1\}$ so that  $U'$ has lattice $\Gamma' = \pi \Z i$.

Let $Y' = j/2$, $X' = k/2$, which have bracket relations
$[Y', X'] = H'$,
$[H', Y'] = X'$,
$[H', X'] = Y'$.
By \eqref{eq:bracket-alpha}, to each positive root $\alpha$ corresponds a Lie algebra isomorphism $\u' \to \u(\alpha) \subseteq \u$ that maps  $H' \mapsto H^\vee_\alpha/2$, $Y' \mapsto Y_\alpha$, $X' \mapsto X_\alpha$, thus $i \mapsto H^\vee_\alpha$, $\t'$ into $\t$, $\s'$ into $\s$ and $\k'$ into $\k$. Since $U'$ is simply connected, this isomorphism can be integrated to a Lie group homomorphism $U' \to U$ that maps $e^{\theta i} \mapsto \exp(\theta H^\vee_\alpha)$ and $K'$ into $K$. Since $e^{\pi i} = -1 \in K'$, it follows that $\exp(\pi H^\vee_\alpha) \in K$ so that $\pi H^\vee_\alpha \in \Gamma$.
\end{proof}

For $H \in \t$, recall the Weyl subgroup $W^q$, centralizer of $H$ modulo the lattice $\Gamma$, and its subgroup $W^q_0$, generated by reflections around root hyperplanes paralell to the hyperplanes in $\mathcal{S}$ which cross $H$.
Since $W$ permutes the roots and $\Gamma \subseteq \mathcal{S}$, we have the following.

\begin{proposition}
$W_0^q$ is a normal subgroup of $W^q$. 
\end{proposition}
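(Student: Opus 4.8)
The plan is to realize $W^q_0$ as the subgroup generated by a set of reflections that is stable under conjugation by $W^q$; normality then follows formally. Concretely, $W^q_0$ is generated by the reflections $r_\alpha$, $\alpha \in \Pi$, for which the Stiefel hyperplane $\{\alpha = \alpha(H)\}$ lies in $\mathcal{S}$, i.e.\ for which $\alpha(H) \in \pi\Z$. First I would record that these generators indeed lie in $W^q$: if $\alpha(H) \in \pi\Z$ then $r_\alpha(H) = H - \alpha(H)H^\vee_\alpha \in H + \pi\Z H^\vee_\alpha \subseteq H + \Gamma_0 \subseteq H + \Gamma$ by Proposition \ref{propos:gamma-inv}, so $r_\alpha \in W^q$ and hence $W^q_0 \subseteq W^q$.

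The heart of the argument is the transformation rule $w r_\alpha w^{-1} = r_{w^*\alpha}$ for $w \in W$, obtained from $r_\alpha(X) = X - \alpha(X) H^\vee_\alpha$ together with the facts that $w$ acts on $\t$ by a linear isometry carrying the coroot $H^\vee_\alpha$ to $H^\vee_{w^*\alpha}$ (the assignment $\alpha \mapsto H_\alpha \mapsto H^\vee_\alpha$ being equivariant for the coadjoint action $w^*$ on $\Pi$ and the isometric action on $\t$) and that $W$ permutes $\Pi$, so $w^*\alpha \in \Pi$. Given $w \in W^q$ and a generator $r_\alpha$ of $W^q_0$, so $\alpha(H) \in \pi\Z$, it then remains only to check that $r_{w^*\alpha}$ is again a generator of $W^q_0$, i.e.\ that $(w^*\alpha)(H) \in \pi\Z$. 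Since $w^{-1} \in W^q$ we may write $w^{-1}H = H + \gamma$ with $\gamma \in \Gamma$, whence $(w^*\alpha)(H) = \alpha(w^{-1}H) = \alpha(H) + \alpha(\gamma)$; here $\alpha(H) \in \pi\Z$ by hypothesis and $\alpha(\gamma) \in \pi\Z$ because $\Gamma \subseteq \Gamma_1$, again by Proposition \ref{propos:gamma-inv}. Hence $(w^*\alpha)(H) \in \pi\Z$, so $r_{w^*\alpha} \in W^q_0$, and therefore $w W^q_0 w^{-1} \subseteq W^q_0$ for every $w \in W^q$, which is exactly the claim.

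There is no serious obstacle here; the only thing requiring care is the bookkeeping with the coadjoint action — confirming that $w^*\alpha$ is again a root and that conjugating $r_\alpha$ genuinely produces the reflection $r_{w^*\alpha}$ (and not some affine or off-lattice map) — but this is routine once one invokes that $W$ acts by isometries of $\t$ preserving $\Pi$. If one prefers, the entire computation can be carried out in $\wh{\t}$ with $\wh{\Gamma}_1$ in place of $\Gamma_1$, since $W$ and all the roots act trivially on $\t \cap \z(\u)$; this changes nothing.
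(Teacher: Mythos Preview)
Your proof is correct and follows exactly the approach the paper indicates: the paper's own ``proof'' is the single sentence ``Since $W$ permutes the roots and $\Gamma \subseteq \mathcal{S}$, we have the following,'' and you have simply written out the details that this sentence leaves implicit (the inclusion $\Gamma \subseteq \mathcal{S}$, via Proposition~\ref{propos:gamma-inv}, being equivalent to $\Gamma \subseteq \Gamma_1$, which is what you actually use). Your explicit verification that $W^q_0 \subseteq W^q$ and the conjugation computation $w r_\alpha w^{-1} = r_{w^*\alpha}$ with $(w^*\alpha)(H) = \alpha(H) + \alpha(\gamma) \in \pi\Z$ are exactly the steps the paper intends the reader to supply.
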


We will need some definitions and results from \cite{SS18}. Two vectors $X, Y \in \t$ are $\Gamma$-equivalent if $X - Y \in \Gamma$ and are called focal equivalents if, furthermore, $|X| = |Y|$.

\begin{lemma}[Lemma 4.2 of \cite{SS18}]
\label{lema:reticulado}
If $X \neq Y \in \t$ are focal equivalents then, for $\epsilon > 0$, $(1 + \epsilon)X \neq Y + \epsilon X$ are $\Gamma$-equivalent and such that 
$$|(1 + \epsilon) X| > |Y + \epsilon X|$$
\end{lemma}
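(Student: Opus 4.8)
The plan is to compute directly. First I would note that $X-Y \in \Gamma$ implies $(1+\epsilon)X - (Y+\epsilon X) = X - Y \in \Gamma$, so the two displayed vectors are indeed $\Gamma$-equivalent; this is immediate and costs nothing. The content is the strict inequality on norms, which I would obtain by expanding the squared norms using the inner product $\langle\cdot,\cdot\rangle$ on $\t$ and the hypothesis $|X| = |Y|$.

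Concretely, write $|(1+\epsilon)X|^2 = (1+\epsilon)^2 |X|^2 = (1 + 2\epsilon + \epsilon^2)|X|^2$, and $|Y+\epsilon X|^2 = |Y|^2 + 2\epsilon \langle X, Y\rangle + \epsilon^2 |X|^2$. Subtracting and using $|X|^2 = |Y|^2$, the $|Y|^2$ and $\epsilon^2|X|^2$ terms cancel and we are left with
$$
|(1+\epsilon)X|^2 - |Y+\epsilon X|^2 = 2\epsilon\big(|X|^2 - \langle X, Y\rangle\big) = 2\epsilon\,\langle X, X - Y\rangle .
$$
So the claim reduces to showing $\langle X, X-Y\rangle > 0$ whenever $X \neq Y$ and $|X| = |Y|$. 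For this, symmetrize: since $|X| = |Y|$, we have $\langle X, X-Y\rangle = \tfrac12(|X|^2 - 2\langle X,Y\rangle + |Y|^2) = \tfrac12 |X - Y|^2$, which is strictly positive precisely because $X \neq Y$. Plugging back gives $|(1+\epsilon)X|^2 - |Y+\epsilon X|^2 = \epsilon\,|X-Y|^2 > 0$ for $\epsilon > 0$, hence $|(1+\epsilon)X| > |Y+\epsilon X|$, and in particular $(1+\epsilon)X \neq Y + \epsilon X$.

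There is no real obstacle here — this is a one-line Euclidean geometry computation (the key identity being $\langle X, X-Y\rangle = \tfrac12|X-Y|^2$ under $|X|=|Y|$), and the only thing to be careful about is bookkeeping the cancellations and recording that $X \neq Y$ forces the inequality to be strict rather than just $\geq$. The geometric picture, worth stating for intuition, is that $X$ and $Y$ lie on a common sphere about the origin, the hyperplane bisecting the segment $[X,Y]$ (the mediator hyperplane of Remark~\ref{rmk:focal-equivalents}) passes through the origin, and moving from $Y$ toward $X$ along the fixed $\Gamma$-translation direction $X-Y$ strictly decreases distance to the origin while moving from $X$ in that same direction strictly increases it.
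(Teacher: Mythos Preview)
Your proof is correct. The paper does not supply its own proof of this lemma---it is simply quoted from \cite{SS18}---so there is nothing to compare against; your direct expansion of the squared norms, the cancellation using $|X|=|Y|$, and the identity $\langle X, X-Y\rangle = \tfrac12|X-Y|^2$ under that hypothesis are all accurate and yield exactly $|(1+\epsilon)X|^2 - |Y+\epsilon X|^2 = \epsilon\,|X-Y|^2 > 0$.
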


The Dirichlet domain of $\Gamma$ is given by
\begin{equation}
\label{eq:def-dirichilet}
{\mathcal D} = \{ H \in \t: \, | H | < | H+\gamma |, \, \gamma \in \Gamma - 0 \}
\end{equation}
so that 
$$
\ov{{\mathcal D}} = \{ H \in \t: \, | H | \leq | H+\gamma |, \, \gamma \in \Gamma \}
$$
%and is the intersection of the lower half-spaces of the mediator hyperplanes of the lattice. 
The next result shows that ${\mathcal D}$ is a fundamental domain of $\Gamma$ and characterizes its geometry.

\begin{proposition}[Proposition 4.3 of \cite{SS18}]
\label{prop:reticulado}
We have that
\begin{enumerate}[i)]
\item Given $H \in \t$, the minimal norm among $H + \Gamma$ is attained at $\ov{\mathcal D}$.

\item $\ov{\mathcal D}$ is nonempty, convex and compact.

\item ${\mathcal D}$ is the interior of $\ov{\mathcal D}$ so that $H \in {\mathcal D}$ iff $H$ is the unique element of minimal norm in $H + \Gamma$.  

\item Its boundary is $\partial {\mathcal D} = \{ H \in \ov{\mathcal D}:$ $H$ has nontrivial focal equivalents in $\t\}$.
\end{enumerate}
\end{proposition}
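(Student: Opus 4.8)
\smallskip
\noindent\textbf{Proof idea.} The set $\mathcal{D}$ of \eqref{eq:def-dirichilet} is the open Voronoi cell of the origin for the lattice $\Gamma$ in the Euclidean space $\t$; recall that, since $\t$ is a maximal flat torus, $\Gamma$ is a lattice of full rank in $\t$ and $\exp_p(\t)\cong \t/\Gamma$ is a flat torus in $S$. The plan is to establish (i)--(iv) by the standard elementary arguments for Voronoi cells, the only genuinely lattice-theoretic input being Lemma \ref{lema:reticulado}.

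For (i), note that $H+\Gamma$ is a closed discrete subset of $\t$, so the norm attains a minimum on it, say at $H' = H+\gamma_0$. For every $\gamma\in\Gamma$ one has $H'+\gamma \in H+\Gamma$, hence $|H'|\le|H'+\gamma|$, i.e.\ $H'\in\ov{\mathcal D}$, and $H'$ realizes the minimal norm. For (ii), expanding $|H|^2\le|H+\gamma|^2$ exhibits $\ov{\mathcal D}$ as the intersection over $\gamma\in\Gamma$ of the closed half-spaces $\{\langle H,\gamma\rangle \ge -|\gamma|^2/2\}$, hence closed and convex; it is nonempty by (i); and it is bounded, because for a $\Z$-basis $\gamma_1,\dots,\gamma_n$ of $\Gamma$ membership in $\ov{\mathcal D}$ forces $|\langle H,\gamma_i\rangle|\le|\gamma_i|^2/2$ for all $i$, which confines $H$ to a bounded set. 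So $\ov{\mathcal D}$ is compact.

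For (iii): $\mathcal{D}$ is open, since near any fixed $H_0\in\mathcal{D}$ only finitely many $\gamma\in\Gamma$ can satisfy $|H_0+\gamma|\le 2|H_0|$, so on a neighbourhood of $H_0$ the set $\mathcal{D}$ is a finite intersection of open half-spaces; hence $\mathcal{D}\subseteq\mathrm{int}\,\ov{\mathcal D}$. Conversely, if $H\in\ov{\mathcal D}\setminus\mathcal{D}$, then $H\notin\mathcal{D}$ gives some $\gamma_0\in\Gamma\setminus 0$ with $|H+\gamma_0|\le|H|$, while $H\in\ov{\mathcal D}$ forces $|H+\gamma_0|=|H|$; thus $H$ and $H+\gamma_0$ are distinct focal equivalents. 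Applying Lemma \ref{lema:reticulado} with $X=H$, $Y=H+\gamma_0$, the point $(1+\ep)H$ is $\Gamma$-equivalent to $(1+\ep)H+\gamma_0$ with $|(1+\ep)H|>|(1+\ep)H+\gamma_0|$, so $(1+\ep)H\notin\ov{\mathcal D}$ for all $\ep>0$; letting $\ep\to 0^+$ shows $H\notin\mathrm{int}\,\ov{\mathcal D}$. Hence $\mathcal{D}=\mathrm{int}\,\ov{\mathcal D}$, and the stated equivalence is then immediate from \eqref{eq:def-dirichilet}: $H\in\mathcal{D}$ iff $|H|<|H+\gamma|$ for every $\gamma\neq 0$, i.e.\ iff $H$ is the unique element of minimal norm in $H+\Gamma$.

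For (iv): being closed, convex, and (by (iii)) the closure of its interior $\mathcal{D}$, the set $\ov{\mathcal D}$ satisfies $\partial\mathcal{D}=\ov{\mathcal D}\setminus\mathcal{D}$; and the computation in (iii) shows that $H\in\ov{\mathcal D}$ fails to lie in $\mathcal{D}$ precisely when $|H|=|H+\gamma_0|$ for some $\gamma_0\in\Gamma\setminus 0$, i.e.\ precisely when $H$ has a nontrivial focal equivalent in $\t$. I do not expect a real obstacle: the only non-formal input is Lemma \ref{lema:reticulado}, used to push a boundary point off the cell, and the only geometric fact that must be kept in mind — worth recording explicitly, since $S$ is allowed a Euclidean factor — is that $\Gamma$ is a full-rank lattice in $\t$, which is exactly what makes $\ov{\mathcal D}$ bounded.
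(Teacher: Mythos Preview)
The paper does not prove this proposition; it is quoted verbatim as Proposition~4.3 of \cite{SS18}, so there is no in-paper argument to compare against. Your proof is correct and self-contained: items (i)--(ii) are the standard Voronoi-cell facts, and for (iii)--(iv) you correctly use Lemma~\ref{lema:reticulado} to show that any $H\in\ov{\mathcal D}\setminus\mathcal D$ is a limit of points $(1+\ep)H\notin\ov{\mathcal D}$, which is exactly what is needed to identify $\mathcal D$ with the interior and $\partial\mathcal D$ with the locus of nontrivial focal equivalents. One cosmetic remark: your closing caveat about ``$S$ allowed a Euclidean factor'' is slightly misdirected---in this paper $S$ itself is always compact, so $Tp\cong\t/\Gamma$ is compact and $\Gamma$ is automatically of full rank; the Euclidean-factor issue arises only for the universal cover $\wt S$, not for $S$.
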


Now denote the alcove at the origin of the Stiefel diagram by
\begin{equation}
\label{eq:dirichilet-domain}
{\mathcal D}_0 = \{ H \in \t: \, \alpha(H) < \pi,\, \alpha \in \Pi \}
\end{equation}
so that 
\begin{equation}
\ov{{\mathcal D}_0} = \{ H \in \t: \, \alpha(H) \leq \pi,\, \alpha \in \Pi \}
\end{equation}

\begin{theorem}
\label{thm-steinberg}
If $\Gamma = \Gamma_0$ then 
\begin{enumerate}[i)]
    \item Its Dirichilet domain is ${\mathcal D}_0$,
    \item The focal equivalents of $H \in \ov{{\mathcal D}_0}$ in $\t$ are given by the orbit $W_0^q H$,
    \item $W^q = W^q_0$.
\end{enumerate}
\end{theorem}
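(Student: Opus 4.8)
\emph{Proof plan.} The plan is to recognize, under the hypothesis $\Gamma=\Gamma_0$, the group $\Lambda=W\ltimes\Gamma$ acting on $\t$ by isometries as the affine reflection group generated by the orthogonal reflections in the hyperplanes of the Stiefel diagram $\mathcal S$, and then to deduce the three statements from the classical structure of such groups: the closure of a chamber is a strict fundamental domain, and the stabilizer of a point is generated by the reflections in the hyperplanes through it (Steinberg's theorem). First I would note that $\Gamma=\Gamma_0$ already forces $\t=\wh{\t}$, since $\Gamma$ is a full lattice of $\t$ while $\Gamma_0$ is a lattice of $\wh{\t}=\t\cap\g$; hence the restricted roots span $\t^*$ and $\Lambda$ is a cocompact affine reflection group on $\t$. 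The identification with the reflection group of $\mathcal S$ uses the formula $r_\alpha(H)=H-\alpha(H)H_\alpha^\vee$: the reflection of $\t$ in $\{\alpha=\pi k\}\subseteq\mathcal S$ is $H\mapsto r_\alpha(H)+\pi k H_\alpha^\vee$, the composite of $r_\alpha\in W$ with translation by $\pi k H_\alpha^\vee\in\Gamma_0$; conversely each $r_\alpha$ and each generator $\pi H_\alpha^\vee$ of $\Gamma_0$ is a product of two such reflections. Thus $\Lambda$ is exactly the reflection group of $\mathcal S$, its chambers are the alcoves of $\mathcal S$, its orbit of $0$ is $\Gamma_0=\Gamma$, and, writing $\overline A\subseteq\overline{\t^+}$ for the closed fundamental alcove (so $0\in\overline A$): every $\Lambda$-orbit meets $\overline A$ in exactly one point, and the $\Lambda$-stabilizer of any $H\in\t$ is generated by the reflections in the hyperplanes of $\mathcal S$ through $H$, i.e.\ by the affine reflections above with $\alpha(H)=\pi k$.

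For (i) I would first observe that $\overline{\mathcal{D}}=\{H:|H|\le|H+\gamma|,\ \gamma\in\Gamma_0\}$ is the Voronoi cell $V_0$ of $\Gamma_0$, and that testing its defining inequalities against $\gamma=\pm\pi H_\alpha^\vee$ gives exactly $|\alpha(H)|\le\pi$, so $V_0\subseteq\overline{\mathcal{D}_0}$. For the reverse inclusion I would use a volume count: for $H\in\overline{\t^+}$ one has $H\in\overline{\mathcal{D}_0}$ iff the highest root of each irreducible factor of $\Pi$ takes value $\le\pi$ on $H$, iff $H\in\overline A$; since $\overline{\mathcal{D}_0}$ is $W$-stable this gives $\overline{\mathcal{D}_0}=\bigcup_{w\in W}w\overline A$, a union of $|W|$ distinct alcoves, hence of volume $|W|\cdot\mathrm{vol}(\overline A)=\mathrm{covol}(\Gamma_0)$; as the $\Gamma_0$-translates of $\overline{\mathcal{D}_0}$ cover $\t$ (they already cover every alcove), $\overline{\mathcal{D}_0}$ is a fundamental domain for $\Gamma_0$, and a compact convex fundamental domain containing the compact convex fundamental domain $V_0$ must coincide with it. Therefore $\overline{\mathcal{D}}=\overline{\mathcal{D}_0}$, and passing to interiors (Proposition~\ref{prop:reticulado}) gives $\mathcal{D}=\mathcal{D}_0$.

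For (iii): given $w\in W^q$, write $wH=H+\gamma$ with $\gamma\in\Gamma=\Gamma_0$; then $t_{-\gamma}\circ w\in\Lambda$ fixes $H$, so by the stabilizer statement it is a product of affine reflections as above with $\alpha(H)\in\pi\Z$, and comparing linear parts shows $w$ is the corresponding product of the $r_\alpha$ with $\alpha(H)\in\pi\Z$, i.e.\ $w\in W^q_0$. Since $W^q_0\subseteq W^q$ always (for $\alpha(H)\in\pi\Z$ one has $r_\alpha H-H=-\alpha(H)H_\alpha^\vee\in\Gamma_0\subseteq\Gamma$, and $W^q$ is a subgroup because $\Gamma$ is $W$-invariant), we get $W^q=W^q_0$. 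Finally (ii): a focal equivalent $H'$ of $H\in\overline{\mathcal{D}_0}$ has $H'-H\in\Gamma$ and $|H'|=|H|$; since by (i) $H$ realizes the minimal norm in its $\Gamma$-coset so does $H'$, whence $H'\in\overline{\mathcal{D}}=\overline{\mathcal{D}_0}$ as well. Writing $H=u\overline H$ and $H'=u'\overline{H}'$ with $u,u'\in W$ and $\overline H,\overline{H}'\in\overline A$, the points $\overline H$ and $\overline{H}'$ lie in the same $\Lambda$-orbit (they differ by the element $X\mapsto(u')^{-1}uX+(u')^{-1}\gamma$ of $\Lambda$), hence coincide since $\overline A$ is a strict fundamental domain; thus $H'=u'u^{-1}H$ with $u'u^{-1}\in W^q$, so $u'u^{-1}\in W^q_0$ by (iii). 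Conversely every $w\in W^q_0$ yields the focal equivalent $wH$, so the focal equivalents of $H$ in $\t$ are exactly the orbit $W^q_0 H$.

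The step I expect to be most delicate is the identification in the first paragraph, together with the two supporting claims in (i), in the presence of a possibly reducible and possibly non-reduced restricted root system: that the reflection group of $\mathcal S$ really is $W\ltimes\Gamma_0$ with $\Gamma_0$ the $\Z$-span of \emph{all} the $\pi H_\alpha^\vee$; that $\overline{\mathcal{D}_0}\cap\overline{\t^+}$ is precisely the fundamental alcove for the full hyperplane arrangement; and that the fundamental-domain and Steinberg stabilizer theorems are applied in the generality (cocompact affine reflection groups acting on $\t$) in which they are valid.
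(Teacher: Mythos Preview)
Your proof is correct and rests on the same underlying mechanism as the paper's: recognizing $W\ltimes\Gamma_0$ as the affine reflection group of the Stiefel diagram and invoking the standard chamber/stabilizer theory (Steinberg). The paper itself simply cites Theorem~4.5 of \cite{SS18} for items (i) and (ii) and then derives (iii) from (ii) in two lines (if $w\in W^q$ then $wH$ is a focal equivalent of $H$, hence $wH=vH$ for some $v\in W^q_0$, so $v^{-1}w\in W_H\subseteq W^q_0$). You instead unpack the affine Weyl group argument explicitly, prove (iii) directly from the stabilizer theorem, and then deduce (ii) from (iii); your volume argument for (i) is a clean self-contained substitute for the citation. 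Your preliminary observation that $\Gamma=\Gamma_0$ forces $\t=\wh\t$ is not made explicit in the paper but is exactly what is needed to ensure the affine group is cocompact so the chamber theory applies. The delicate points you flag (non-reduced root systems, identification of $\overline{\mathcal D_0}\cap\overline{\t^+}$ with the fundamental alcove, generality of the stabilizer theorem) are all handled correctly by your setup, since $\Gamma_0$ is defined over \emph{all} roots and the highest-root description of the alcove is what gives $\overline{\mathcal D_0}\cap\overline{\t^+}=\overline A$.
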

\begin{proof}
Items (i) and (ii) are given by Theorem 4.5 of \cite{SS18}, here $\Gamma_0$ is $\pi \Z$-spanned by $H^\vee_\alpha$, instead of $\Z$-spanned.

For item (iii), let $w \in W^q$. Then $wH$ is a focal equivalent of $H$ so that, by the previous item, $wH = vH$, $v \in W^q_0$.  It follows that $v^{-1}w H = H$, so that $u = v^{-1}w$ is the product of reflections by simple roots $\alpha$ such that $\alpha(H) = 0$. Then $w = vu$ is the product of reflections by roots $\alpha$ such that $\alpha(H) \in \pi\Z$, so that $w \in W_h^0$.
\end{proof}

Note that this provides an alternative definition of $W^q_0$ as the centralizer of $H$ modulo the fundamental lattice $\Gamma_0$.

\subsection{Cartan covering}
Consider the following involution of $U$
\begin{equation}
u^* = \sigma(u^{-1})
\end{equation}
We clearly have that
\begin{eqnarray}
u^{**} = u, \qquad (uv)^* = v^* u^*, \qquad 1^* = 1, 
\label{propr-acao*} \\ 
uu^* = u \sigma(u^{-1}) = 1 \quad \Longleftrightarrow \quad u \in {\rm fix}(\sigma).
\label{isotropia-acao*}
\end{eqnarray}
By \eqref{isotropia-acao*} we have in particular that
\begin{equation}
k k^* = 1, \quad  \text{for }k \in K, 
\label{K-acao*}
\end{equation}
since $K \subseteq{\rm fix}(\sigma)$. By \eqref{propr-acao*} we have an action of $U$ on itself given by
\begin{equation}
\label{def-acao*}
u \cdot h = u h u^*
\end{equation}
which restricts to conjugation when $u \in K$.  

A Riemannian geometric symmetric space $S$ with geodesic reflection $s_p$ at $p$ can be (almost) embeeded in its isometry group by the Cartan map
\begin{equation}
\label{def-geom-embeed}
\eta: S \to I(S), \qquad q \mapsto s_q s_p^{-1},
\end{equation}
more precisely, this map is a covering onto its image.
Indeed, consider $U = I(S)_0$ with involution $\sigma(u) = s_p u s_p^{-1}$ and isotropy $K = U_p$. Since $s_{up} = u s_p u^{-1}$, it follows that
$$
\eta(u p) =  u s_p u^{-1} s_p^{-1} = u \sigma(u^{-1}) = u u^*  
$$
so that $\eta$ is $U$-equivariant, where on the right hand side $U$ acts on itself by the action \eqref{def-acao*}.  It follows that the image of $\eta$ is the orbit of $1$ under this action, which by \eqref{isotropia-acao*} has isotropy ${\rm fix}(\sigma)$.
Thus \eqref{def-geom-embeed} is a covering onto its image, with discrete fiber ${\rm fix}(\sigma)/K$.

\section{Weyl group action on $U/K$}
\label{sec:weyl-group-action}

In this section we extend to a maximal (flat) torus of a compact symmetric space $U/K$ the diagonalization result \eqref{eq-lemma-well-known} about the action of the Weyl group on $\t$.

%Adaptar de grupos compactos para espaços simétricos (compactos).
Recall that, since the Weyl group $W$ of the symmetric space $U/K$ is given by infinitesimal data, it is given analytically as either one of the normalizers $M^* = N_K(\t)$ or $N_{K_0}(\t)$.  Also, an element $w \in W$ can be viewed both as a class $kM$ or as an orthogonal map $\Ad(k)|_\t$, where $k \in N_{K_0}(\t)$.
Recall the centralizer $M$ of $\t$ in $K$, the next result shows that this subgroup intersects each connected component of $K$.

\begin{proposition}
\label{propos:m-cc-k}
%We have the following:
%\begin{enumerate}[(i)]
%\item Given a maximal abelian subalgebra $\t' \subseteq\s$, there exists $k \in K_0$ such that $\Ad(k)\t' = \t$.
%
%\item 
$K = K_0 M$.
%\end{enumerate}
\end{proposition}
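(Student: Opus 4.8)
The plan is to show that every connected component of $K$ meets the centralizer $M$ of $\t$ in $K$, which is equivalent to the stated identity $K = K_0 M$. Fix $k \in K$; I want to produce $m \in M$ lying in the same component as $k$, i.e.\ with $m \in K_0 k$. The idea is to use the conjugation action of $k$ on $\s$ together with the diagonalization result \eqref{eq-diag}. Since $\Ad(k)$ is an isometry of $\s$ preserving the set of maximal tori of $\s$, the torus $\t' = \Ad(k)\t$ is again a maximal torus of $\s$; by the conjugacy of maximal tori under $\mathrm{Int}_\u(\k)$ (equivalently, by Theorem~II.2.3 of \cite{borel}), there exists $k_0 \in K_0$ with $\Ad(k_0)\t' = \t$, that is $\Ad(k_0 k)\t = \t$. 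Hence $k_0 k \in N_K(\t) = M^*$, and it lies in the same connected component of $K$ as $k$ (since $k_0 \in K_0$). So, replacing $k$ by $k_0 k$, I may assume from the start that $k \in M^* = N_K(\t)$.

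Now $k$ normalizes $\t$, so $w := \Ad(k)|_\t$ is an element of the Weyl group $W = M^*/M$. The next step is to correct $k$ within $K_0$ so as to kill this Weyl element. Because $W$ is generated by the simple reflections $r_\alpha$, and each simple reflection $r_\alpha$ is realized by an element of $N_{K_0}(\t)$ (indeed $W$ may be presented as $N_{K_0}(\t)/M_0$ where $M_0 = M \cap K_0$, using that simple reflections come from the connected subgroups $\exp \k_\alpha$ acting on $\t$ — this follows from Lemma~\ref{lemma:sakai} and the bracket relations \eqref{eq:bracket-alpha}, which exhibit, for each $\alpha$, a homomorphic image of $S^3$ inside $U$ whose induced action on $\t$ is the reflection $r_\alpha$), there is $k_1 \in K_0 \cap M^*$ with $\Ad(k_1)|_\t = w$. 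Then $k_1^{-1} k$ centralizes $\t$, i.e.\ $k_1^{-1} k \in M$, and $k = k_1 (k_1^{-1} k) \in K_0 M$. This proves $K \subseteq K_0 M$; the reverse inclusion is immediate since $K_0, M \subseteq K$ and $K$ is a group.

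The main obstacle is the claim that the Weyl group $W = M^*/M$ is already realized inside $K_0$, i.e.\ that the natural map $N_{K_0}(\t)/(M \cap K_0) \to N_K(\t)/M = W$ is surjective. This is exactly where one needs that each simple reflection lifts to $K_0$: for that I would use the subalgebra $\u(\alpha)$ spanned by $H^\vee_\alpha/2, Y_\alpha, X_\alpha$ from \eqref{eq:bracket-alpha}, integrate it (as in the proof of Proposition~\ref{propos:gamma-inv}) to a homomorphism $S^3 \to U$ carrying $\k'$ into $\k$, and check that a suitable element of the image — the image of $j \in S^3$, say — lies in $K_0$, normalizes $\t$, and acts on $\t$ as $r_\alpha$. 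Since $S^3$ is connected this element is automatically in $K_0$. Granting this, the argument above goes through verbatim; everything else is a routine assembly of the conjugacy of maximal flat tori \eqref{eq-diag} and the generation of $W$ by simple reflections.
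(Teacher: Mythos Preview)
Your argument is correct and follows essentially the same route as the paper's proof: both first use the $K_0$-conjugacy of maximal flat tori to reduce to $k\in M^*$, then invoke the fact that every Weyl element is already represented in $N_{K_0}(\t)$ to peel off a factor in $K_0$ and land in $M$. The paper simply asserts the existence of $u\in N_{K_0}(\t)$ with $\Ad(u)|_\t=\Ad(lk)|_\t$ (it records this fact at the start of Section~\ref{sec:weyl-group-action}), whereas you supply the standard justification via the $S^3\to U$ homomorphism associated to each root. One small slip: the element of $S^3$ whose image lies in $K_0$ and realizes $r_\alpha$ is the quaternion $k$ (which spans $\k'$), not $j$ (which lies in $\s'$); concretely, the desired representative is $\exp(\pi X_\alpha)\in K_0$.
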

\begin{proof}
Let $k \in K$, then $\t' = \Ad(k)\t$ is maximal abelian in $\s$. Thus, there exists $l \in K_0$ such that $\Ad(l) \t' = \Ad(lk)\t = \t$.  It follows that $lk \in M^*$, so that there exists $u \in N_{K_0}(\t)$ such that $\Ad(lk)H = \Ad(u)H$, for all $H \in \t$.
It follows that $u^{-1}lk \in M$, so that
$$
k = (l^{-1}u)(u^{-1}lk)
$$
where $l^{-1}u \in K_0$, as claimed.
\end{proof}

We first generalize \eqref{eq-lemma-well-known} to $T$, the connected subgroup of $U$ with Lie algebra $\t$. 

\begin{proposition}
\label{lemma:well-know}
% \begin{enumerate}[(i)]
% \item 
Let $k \in K$ and $h \in T$ be such that $khk^{-1} \in T$, then
$$
khk^{-1} = w h w^{-1}
$$
for some $w \in W$.
%
% \item Let $k \in K$ and $H \in \t$ be such that $\Ad(k)H \in \t$, then
% $$
% \Ad(k)H  = w H
% $$
% for some $w \in W$.
% \end{enumerate}
\end{proposition}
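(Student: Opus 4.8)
The plan is to reduce the statement to the known result \eqref{eq-lemma-well-known} by passing from the group elements $h, khk^{-1} \in T$ to their Lie algebra logarithms and to the adjoint action on $\t$. The subtlety is that the exponential map $\t \to T$ is not injective, so a single $h \in T$ has many logarithms in $\t$ (differing by the lattice $\Gamma$), and I must choose logarithms compatibly on both sides.

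First I would pick $H \in \t$ with $\exp(H) = h$. Since $khk^{-1} = k\exp(H)k^{-1} = \exp(\Ad(k)H)$ lies in $T$, the element $\Ad(k)H \in \s$ need not lie in $\t$ a priori — but here is where I use the diagonalization hypothesis more carefully. Actually $\Ad(k)H$ is some vector in $\s$, and $\exp(\Ad(k)H) \in T$; I want to replace $\Ad(k)H$ by something in $\t$. By \eqref{eq-diag} there is $l \in K_0$ with $\Ad(l)\Ad(k)H \in \t$. Then $\Ad(lk)H \in \t$, and by \eqref{eq-lemma-well-known} we get $\Ad(lk)H = wH$ for some $w \in W$. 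Exponentiating, $l(khk^{-1})l^{-1} = \exp(\Ad(lk)H) = \exp(wH) = w h w^{-1}$, where in the last equality I regard $w$ as represented by an element of $N_{K_0}(\t)$ so that conjugation by it agrees with $\Ad(w)$ on $T$. So it remains to absorb the extra factor $l$: I must show $l(khk^{-1})l^{-1}$ can be rewritten so that the $l$-conjugation is itself a Weyl group element fixing the torus appropriately, or better, show directly that conjugation by $l$ on the specific element $khk^{-1} \in T$ equals conjugation by some $w' \in W$.

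For that last point, note $khk^{-1} \in T$ and $l \in K_0$ with $l(khk^{-1})l^{-1} = whw^{-1} \in T$; equivalently $\Ad(l)$ maps the element $whw^{-1}$ of $T$ into $T$. If $whw^{-1}$ were $\exp$ of a vector in $\t$ whose $W$-translate I already control, I could apply \eqref{eq-lemma-well-known} again — indeed $whw^{-1} = \exp(wH)$ with $wH \in \t$, and $l(whw^{-1})l^{-1} \in T$ forces, by the same argument, a further Weyl element. The clean way to organize this is to run the argument once: given $k \in K$ and $H \in \t$ with $\exp(\Ad(k)H) \in T$, use \eqref{eq-diag} to get $l \in K_0$ with $\Ad(lk)H \in \t$, hence $\Ad(lk)H = wH$ by \eqref{eq-lemma-well-known}; then $khk^{-1} = \exp(\Ad(k)H)$ and $\exp(\Ad(lk)H) = l \exp(\Ad(k)H) l^{-1}$, so $l(khk^{-1})l^{-1} = whw^{-1}$. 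Now apply the \emph{same} reduction to the pair $(l^{-1}, whw^{-1})$: since $l^{-1}(whw^{-1})l = khk^{-1} \in T$, there is $l' \in K_0$ with $\Ad(l'l^{-1})(wH) \in \t$, forcing $\Ad(l'l^{-1})(wH) = w'wH$, hence $l'(khk^{-1})(l')^{-1} = w'whw^{-1}(w')^{-1}$. Composing the two and using that the product of the $K_0$-pieces normalizes $\t$, one sees the net conjugation on $khk^{-1}$ lands in $W$.

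The main obstacle I anticipate is precisely this bookkeeping of the auxiliary $K_0$-factor $l$: \eqref{eq-diag} only guarantees conjugating \emph{some} vector into $\t$, not that the conjugating element itself normalizes $\t$, so one cannot immediately read off a Weyl group element. The resolution is that the hypothesis $khk^{-1} \in T$ is strong — both $h$ and its conjugate already lie in the torus — and combining \eqref{eq-diag} with \eqref{eq-lemma-well-known} shows that, after the dust settles, the ambiguous $K_0$-part only contributes an element of $N_{K_0}(\t)$, i.e.\ a Weyl group element. A slicker alternative, which I would try first, is to use Proposition \ref{propos:m-cc-k}: write $k = k_0 m$ with $k_0 \in K_0$ and $m \in M$; since $m$ centralizes $\t \supseteq \langle h\rangle$ and centralizes $T$, conjugation by $k$ on $h$ equals conjugation by $k_0$, reducing immediately to $k \in K_0$, whence $\Ad(k)H \in \s$ and, by the connectedness arguments underlying \eqref{eq-diag}–\eqref{eq-lemma-well-known} applied inside $K_0$, we get $\Ad(k)H = wH$ directly with $w \in W = N_{K_0}(\t)/M$; exponentiating gives the claim.
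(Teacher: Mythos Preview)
Your approach has a genuine gap that neither the iterative argument nor the ``slicker alternative'' closes. The core issue is that the hypothesis $khk^{-1}\in T$ is a \emph{group-level} statement and does \emph{not} imply $\Ad(k)H\in\t$ for any logarithm $H$ of $h$: an element $X\in\s\setminus\t$ can perfectly well satisfy $\exp(X)\in T$. Consequently \eqref{eq-lemma-well-known} is not directly applicable, and your auxiliary $l\in K_0$ from \eqref{eq-diag} is completely uncontrolled with respect to $h'=khk^{-1}$. After one step you have $l h' l^{-1}=whw^{-1}$ with both sides in $T$; to finish you would need $l h' l^{-1}$ and $h'$ to be $W$-conjugate, but that is exactly the proposition you are trying to prove, so the iteration is circular. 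Your ``slicker alternative'' reduces to $k\in K_0$ via Proposition~\ref{propos:m-cc-k}, which is fine, but then the assertion ``we get $\Ad(k)H=wH$ directly'' is unjustified for the same reason: you still only know $\exp(\Ad(k)H)\in T$, not $\Ad(k)H\in\t$.

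The missing idea, which is what the paper supplies, is to choose the conjugating element $l$ \emph{inside the centralizer of} $h'$. Concretely: $Z_U(h')_0$ is $\sigma$-invariant, hence $(Z_U(h')_0,\,Z_K(h')_0)$ is itself a compact symmetric pair; both $T$ and $kTk^{-1}$ are maximal flat tori in it (since $h'\in T$ and $h\in T\subseteq Z_U(h)_0$, so $kTk^{-1}\subseteq kZ_U(h)_0k^{-1}=Z_U(h')_0$). By the conjugacy of maximal tori in this smaller symmetric pair there exists $l\in Z_K(h')_0$ with $l\,kTk^{-1}\,l^{-1}=T$, whence $lk\in N_K(T)$ represents some $w\in W$, and now the crucial point is that $l$ centralizes $h'$, so $whw^{-1}=lkh k^{-1}l^{-1}=l h' l^{-1}=h'$. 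No passage to logarithms is needed, and the argument is not circular because the maximal-torus conjugacy used is the standard one inside the auxiliary symmetric pair, not the proposition itself.
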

\begin{proof}
%For (i), 
Let $khk^{-1} = h' \in T$.  We have that
$$
T \subseteq Z_U(h')_0 = kZ_U(h)_0k^{-1} \supseteq kTk^{-1}.
$$
Furthermore, $Z_U(h')_0$ is $\sigma$-invariant. In fact, if $u \in Z_U(h')$ then, since $Z_U(h') = Z_U(h'^{-1})$, we have that
$$
\sigma(u)h' = \sigma(u)\sigma(h'^{-1}) = \sigma(u h'^{-1}) = \sigma(h'^{-1} u ) 
= \sigma(h'^{-1})\sigma(u)  = h' \sigma(u).
$$
The claim now follows since $Z_U(h') \supseteq \sigma( Z_U(h')_0 )$ is connected and contains the indentity.  Thus $\sigma$ restricts to an involution $\sigma'$ of $Z_U(h')_0$ which, intersected with ${\rm fix}(\sigma)_0 \subseteq K \subseteq{\rm fix}(\sigma)$, furnishes
$$
{\rm fix}(\sigma')_0 \subseteq K \cap Z_U(h')_0 \subseteq{\rm fix}(\sigma')
$$
where $Z_K(h')_0$ is contained in the middle subgroup. It follows that $(Z_U(h')_0, Z_K(h')_0)$ is a symmetric pair, where $T$ and $uTu^{-1}$ are maximal abelian symmetric in $Z_U(h')_0$. 
Thus, there exists $l \in Z_K(h')_0$ such that
$$
l kTk^{-1} l^{-1} = T
$$
so that $lk \in N_K(T)$ and thus $w = lkM \in W$ satisfies
$$
whw^{-1} = lkhk^{-1}l^{-1} = lh'l^{-1} = h'
$$
which proves the item.

% For (ii), let $\Ad(k)H = H' \in \t$.  We have that
% $$
% \t \subseteq \z_\u(H') = \Ad(k) \z_\u(H) \supseteq \Ad(k)\t.
% $$
% Furthermore, $Z_U(H')_0$ is $\sigma$-invariant. In fact, let $u \in Z_U(H')$.  Since $\Ad(\sigma(u))= d \sigma_1 \circ \Ad(u) \circ d \sigma_1^{-1}$, where $\sigma^{-1} = \sigma$ and $d \sigma_1(H') = - H'$, we have that
% $$
% \Ad(\sigma(u))H' = d \sigma_1( - \Ad(u)H' ) = - d \sigma_1( H' ) = H'
% $$
% The claim now follows since $Z_U(H') \supseteq \sigma( Z_U(H')_0 )$ is connected and contains the indentity.  Thus $\sigma$ restricts to an involution $\sigma'$ of $Z_U(H')_0$ which, intersected with ${\rm fix}(\sigma)_0 \subseteq K \subseteq{\rm fix}(\sigma)$, furnishes
% $$
% {\rm fix}(\sigma')_0 \subseteq K \cap Z_U(H')_0 \subseteq{\rm fix}(\sigma')
% $$
% where $Z_K(H')_0$ is contained in the middle subgroup. It follows that $(Z_U(H')_0, Z_K(H')_0)$ is a symmetric pair, where $\t$ and $\Ad(k)\t$ are maximal abelian symmetric in the Lie algebra $\z_\g(H')$ of $Z_U(H')_0$. Thus, there exists $l \in Z_K(H')_0$ such that
% $$
% \Ad(lk) \t = \t
% $$
% so that $lk \in N_K(\t)$ and thus $w = \Ad(lk)|_\t \in W$ satisfies
% $$
% w H = \Ad(lk)H = \Ad(l) H' = H'
% $$
% which proves the item.
\end{proof}

\subsection{Lifting to the universal cover of $U$}
\label{sec:lift}

We now lift some results from the compact connected Lie group $U$ to its universal cover $\wt\pi: \wt U \to U$, which is noncompact whenever $U$ is not semisimple.

Recall from \eqref{eq:centro} that the Lie algebra $\u$ of $\widetilde{U}$ splits as a direct sum of ideals $\u = \g \oplus \z(\u)$, where $\g$ is compact semi-simple and $\z(\u)$ the center of $\u$. It follows the direct product $\widetilde{U} = G \times Z$ where $G$ is the connected subgroup with Lie algebra $\g$, which is compact by a Theorem of Weyl (see Theorem V.1.3 of \cite{borel}), and $Z$ is a connected and central vector subgroup (see Corollary V.1.4 of \cite{borel}).

% The Lie algebra $\u$ of $U$ is a reductive, thus splits as a direct sum of ideals $\u = \g \oplus \z(\u)$, where $\g$ is compact semi-simple and $\z(\u)$ the center of $\u$. Denote by $G$ the connected subgroup of $\widetilde{U}$ with Lie algebra $\g$, by a Theorem of Weyl (see Theorem V.1.3 of \cite{borel}),
% %(see Theorem 12.1.17 of \cite{neeb}), 
% we have that $G$ is compact.  
% Denote by $Z( \widetilde{U} )$ the center of 
% It follows that $\widetilde{U} = G \times Z(\widetilde{U})_0$ (see Corollary V.1.4 of \cite{borel})
% %(see Theorem 12.1.13 of \cite{neeb}).

\begin{proposition}
\label{propos-fix-connected}
If $\widetilde{\sigma}$ is an automorphism of $\widetilde{U}$, then the set ${\rm fix}(\widetilde{\sigma})$ of its fixed points is connected.
\end{proposition}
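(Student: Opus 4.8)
We want to show that $\fix(\wt\sigma)$ is connected, where $\wt\sigma$ is an automorphism of $\wt U = G\times Z$ with $G$ compact semisimple simply connected and $Z\cong\R^n$ a central vector group. First I would reduce to the factors: since $G=[\wt U,\wt U]$ and $Z = Z(\wt U)_0$ are both characteristic, $\wt\sigma$ preserves each, so $\fix(\wt\sigma) = \fix(\wt\sigma|_G)\times\fix(\wt\sigma|_Z)$ and it suffices to treat $G$ and $Z$ separately. On $Z\cong\R^n$ the automorphism $\wt\sigma|_Z$ is linear, so its fixed set is a linear subspace, hence connected. Thus the whole problem is the classical fact that the fixed point set of an automorphism of a \emph{compact, connected, simply connected} Lie group is connected.

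\textbf{Main step: $\fix(\wt\sigma|_G)$ is connected.} Here I would invoke the standard structure-theory argument. Given $g\in\fix(\wt\sigma|_G)$, I want to connect it to the identity inside the fixed set. Choose a maximal torus $T_g$ of the identity component $Z_G(g)_0$ of the centralizer; since $g$ is central in $Z_G(g)_0$ and $G$ is compact connected, $g$ lies in $T_g$ (every element of a compact connected group lies in a maximal torus of any connected subgroup containing it in its center — concretely, $g\in Z_G(g)_0$ and maximal tori cover a compact connected group). Now $Z_G(g)$ is $\wt\sigma$-invariant (the same computation as in the proof of Proposition \ref{lemma:well-know}: if $u$ centralizes $g$ then $\wt\sigma(u)$ centralizes $\wt\sigma(g)=g$), and its identity component $Z_G(g)_0$ is preserved; moreover, since $G$ is simply connected, a theorem of Borel–de Siebenthal guarantees $Z_G(g)$ is connected, so $Z_G(g)=Z_G(g)_0$ is a connected compact group on which $\wt\sigma$ acts fixing $g$. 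Pick a $\wt\sigma$-invariant maximal torus $T$ of $Z_G(g)_0$ containing $g$ — one exists because $\wt\sigma$ permutes the maximal tori containing $g$ and a standard averaging/conjugacy argument (the set of such tori is a homogeneous space on which the finite-order element $\wt\sigma$ acts, and it has a fixed point by, e.g., considering the $\wt\sigma$-fixed subgroup acting) produces an invariant one; alternatively one fixes a $\wt\sigma$-stable maximal torus of $G$ from the outset and reduces to it. On the torus $T\cong(\R/\Z)^k$, write $g=\exp(\xi)$; the path $t\mapsto\exp(t\xi)$, $t\in[0,1]$, stays in $T$. The subtlety is that this path need not lie in $\fix(\wt\sigma)$ unless we choose $\xi$ well: using that $\wt\sigma|_T$ has finite order and $T$ is divisible, I would replace $g$ by a suitable representative so that $\xi$ can be chosen in the fixed subspace of $d\wt\sigma$ on $\Lie(T)$ — equivalently, project the generator onto the $\wt\sigma$-fixed part and use simple connectivity of $G$ to lift. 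Then $\exp(t\xi)\in\fix(\wt\sigma)$ for all $t$, giving a path in the fixed set from $1$ to $g$.

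\textbf{Where the obstacle is.} The genuinely delicate point is the second paragraph's claim that one can choose the generator $\xi$ of the one-parameter subgroup through $g$ inside the $+1$-eigenspace of $d\wt\sigma$ while still having $\exp(\xi)=g$; this is exactly where simple connectedness of $G$ is used, and it is the crux of the whole proposition (for non-simply-connected $G$ the statement is false, e.g. $\SO(3)$ and many involutions). I expect the cleanest route is: take $T$ a $\wt\sigma$-stable maximal torus of $G$ with $g\in T$ (reducing via $Z_G(g)$ as above, then enlarging to a maximal torus of $G$ using that centralizers of tori are connected in simply connected groups), decompose $\Lie(T) = \Lie(T)^{+}\oplus\Lie(T)^{-}$ into $\pm1$-eigenspaces of $d\wt\sigma$; the condition $\wt\sigma(g)=g$ together with the fact that on a simply connected $G$ the exponential $\Lie(T)\to T$ has kernel the full coweight lattice (no torsion phenomena obstructing the splitting) forces $g\in\exp(\Lie(T)^+)$, and then $g$ is joined to $1$ by $t\mapsto\exp(t\xi)$ with $\xi\in\Lie(T)^+$. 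I would present this as the single substantive lemma and then assemble the proof of Proposition \ref{propos-fix-connected} from it together with the trivial $Z$-factor.
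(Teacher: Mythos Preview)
Your reduction to the factors $G$ and $Z$ via characteristicity, and the treatment of the $Z$-factor as a linear fixed subspace, is exactly what the paper does. The difference is that for the $G$-factor the paper simply invokes Borel's theorem (Theorem V.3.3 of \cite{borel}) that the fixed-point set of an automorphism of a compact, connected, simply connected Lie group is connected, whereas you attempt to prove this theorem from scratch.

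Your sketch of Borel's theorem, however, has a genuine gap at precisely the point you flag as ``the crux.'' Granting a $\wt\sigma$-stable maximal torus $T$ containing $g$ (itself only loosely argued), write $g=\exp(\xi)$ and decompose $\xi=\xi^{+}+\xi^{-}$ along the $\pm1$-eigenspaces of $d\wt\sigma$ on $\Lie(T)$. The condition $\wt\sigma(g)=g$ gives only $2\xi^{-}\in\Lambda$, where $\Lambda=\ker(\exp|_{\Lie(T)})$; to conclude $g\in\exp(\Lie(T)^{+})$ you need $\xi^{-}$ to lie in the image of the projection $\Lambda\to\Lie(T)^{-}$, which does \emph{not} follow from the bare fact that $\Lambda$ is the coroot lattice and has ``no torsion phenomena.'' A $d\wt\sigma$-invariant lattice need not split as $\Lambda^{+}\oplus\Lambda^{-}$, and the required statement about the coroot lattice is itself essentially equivalent in difficulty to Borel's theorem; it requires a genuine structural argument (e.g.\ via the action on the root system, or Steinberg's approach through the algebraic group). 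Your proposal identifies the right obstacle but does not overcome it, so as written the $G$-factor step is incomplete. Since the paper's context only needs the involutive case and cites the result, the cleanest fix is to do likewise; if you insist on a self-contained proof, you must supply the missing lattice argument in full.
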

\begin{proof}
We have that $\widetilde{\sigma}\left(G\right) = G$ and that $\widetilde{\sigma}\left(Z\right) = Z$, since these subgroups are characteristic. It follows that $\widetilde{\sigma} = \widetilde{\sigma}|_G \times \widetilde{\sigma}|_{Z}$, which implies that ${\rm fix}(\widetilde{\sigma}) = {\rm fix}\left(\widetilde{\sigma}|_G\right) \times {\rm fix}\left(\widetilde{\sigma}|_{Z}\right)$, which is connected, since ${\rm fix}\left(\widetilde{\sigma}|_{Z}\right)$ is obviously connected and since ${\rm fix}\left(\widetilde{\sigma}|_G\right)$ is also connected, by a Theorem of Borel (see Theorem V.3.3 of \cite{borel}).
\end{proof}

%Denote by $\k$ the eigenspace of $d\sigma_1$ associated to the eigenvalue $1$ and by $\s$ the eigenspace of $d\sigma_1$ associated to the eigenvalue $-1$.

Recall from \eqref{eq:k-s-g} that 
\begin{equation}
\label{eq:u-g}
\u = (\k \cap \g) \oplus (\s \cap \g) \oplus 
(\k \cap \z(\u)) \oplus (\s \cap \z(\u))
\end{equation}

\begin{proposition}
We can always assume that $\k \subseteq\g$ and that $\z(\u) \subseteq\s$.
\end{proposition}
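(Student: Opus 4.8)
The plan is to quotient out the part of the centre of $\u$ that already sits inside $\k$, which acts trivially on $S$ and so carries no geometric information. Write $\z_\k=\z(\u)\cap\k$ and $\z_\s=\z(\u)\cap\s$; since $\sigma$ restricts to a linear involution of the vector space $\z(\u)$ one has $\z(\u)=\z_\k\oplus\z_\s$, and both summands are ideals of $\u$, being subspaces of the centre. I would first check that $T_\k:=\ov{\exp(\z_\k)}$ is a torus contained in $Z(U)\cap K$ with $\mathrm{Lie}(T_\k)=\z_\k$. Indeed $\exp(\z_\k)$ is a connected abelian subgroup which is central in $U$, since $\z_\k$ is central in $\u$, and which lies in $K$, since $\z_\k\subseteq\k$; passing to the closure keeps both properties, so $T_\k$ is a torus with $\mathrm{Lie}(T_\k)\subseteq\z(\u)\cap\k=\z_\k\subseteq\mathrm{Lie}(T_\k)$, whence $\mathrm{Lie}(T_\k)=\z_\k$ (in particular $\exp(\z_\k)$ was already closed).

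Next I would pass to the quotient. As $T_\k$ is central it is normal in $U$, and since $\sigma|_{\z_\k}=\mathrm{id}$ we have $T_\k\subseteq\fix(\sigma)$, so $T_\k$ is $\sigma$-invariant and $\sigma$ descends to an involution $\bar\sigma$ of $\bar U:=U/T_\k$. Putting $\bar K:=K/T_\k$, a short verification gives $\fix(\bar\sigma)_0=\bar K_0\subseteq\bar K\subseteq\fix(\bar\sigma)$, so $(\bar U,\bar K)$ is again a compact symmetric pair; and because $T_\k\subseteq K$ acts trivially on $S=U/K$, the canonical identification $U/K=(U/T_\k)/(K/T_\k)$ presents $S$ as $\bar U/\bar K$ with the same basepoint, the same tangent space and the same invariant metric. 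Its compact symmetric Lie algebra is $\bar\u=\u/\z_\k\cong\g\oplus\z_\s$, for which $[\bar\u,\bar\u]=\g$, $\z(\bar\u)=\z_\s\subseteq\bar\s$, and $\bar\k\cong\k\cap\g\subseteq\g=[\bar\u,\bar\u]$. Replacing $(U,K)$ by $(\bar U,\bar K)$ therefore yields $\k\subseteq\g$ and $\z(\u)\subseteq\s$, as claimed. (Equivalently, on the cover $\widetilde U=G\times Z$ one simply discards the factor with Lie algebra $\z_\k$.)

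Finally I would record that this replacement changes none of the data appearing in our theorems: the restricted root spaces $\u_\alpha$ with $\alpha\neq0$ are unchanged, since they already avoid $\z(\u)$, so the multiplicities, the Stiefel diagram and the Weyl group are unchanged; the Riemannian exponential of $S$ is literally the same, so the lattice $\Gamma$ of $S$ is unchanged; and since $T_\k\subseteq K^q$ acts trivially on $\s$, each focal orbit $K^qH$ coincides with $\bar K^qH$. The whole argument is routine bookkeeping with the decomposition $\u=\g\oplus\z_\k\oplus\z_\s$ into ideals; the one step deserving care is the first, namely that $\ov{\exp(\z_\k)}$ has Lie algebra exactly $\z_\k$, which is what lets us pass to the quotient without removing more than $\z_\k$.
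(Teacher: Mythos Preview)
Your proof is correct and follows essentially the same idea as the paper: quotient out the central piece of $K$. The only cosmetic difference is that you divide by the connected torus $T_\k=(K\cap Z(U))_0=\exp(\z_\k)$, whereas the paper divides by the possibly disconnected group $K\cap Z(U)$; at the Lie-algebra level these give the same $\bar\u\cong\g\oplus\z_\s$, and either quotient realises $S$ as $\bar U/\bar K$ with $\bar\k\subseteq\g$ and $\z(\bar\u)\subseteq\bar\s$. Your closing paragraph, checking that roots, multiplicities, diagram, Weyl group, lattice and focal orbits are unaffected, is a useful addition that the paper leaves implicit.
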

\begin{proof}
We have that
\[
\overline{\u} = (\k \cap \g) \oplus (\s \cap \g) \oplus (\s \cap \z(\u))
\]
is isomorphic to  $\u / (\k \cap \z(\u))$ and denote by $\overline{\sigma}$ the restriction of the involution $\sigma$ to $\overline{\u}$, and denote by an overline the image of the projection $\u \to \overline{\u}$ parallel to $\k \cap \z(\u)$. Thus
\[
\overline{\g} = \g, \qquad
\overline{\z(\u)} = \s \cap \z(\u) , \qquad
\overline{\k} = \k \cap \g, \qquad
\overline{\s} = \s
\]
and hence
\[
\overline{\u} = \overline{\g} \oplus \overline{\z(\u)} = \overline{\k} \oplus \overline{\s}
\]
so that
\[
\z(\overline{\u}) = \s \cap \z(\u) = \overline{\z(\u)}
\]
and
\begin{equation}
\label{eq-lifting-esp-sim}
{\rm fix}(\overline{\sigma}) = \k \cap \g = \overline{\k} = 
\overline{{\rm fix}(\sigma)}
\end{equation}

Now denote by $Z(U)$ the center of $U$,  let
\[
\overline{U} = U/(K \cap Z(U))
\]
$\overline{\sigma}$ the induced involution by $\sigma$ on $\overline{U}$, and denote by an overline the image of the projection $U \to \overline{U}$. We thus get the subgroups
$$
\overline{K} \subseteq \overline{{\rm fix}(\sigma)} 
\subseteq {\rm fix}(\overline{\sigma}) 
$$
which by \eqref{eq-lifting-esp-sim} have the same Lie algebra. It follows that $\overline{U}/\overline{K}$ is a compact Riemannian symmetric space such that 
\[
\frac{U}{K} \simeq \frac{\overline{U}}{\overline{K}}
\]
and such that $\overline{\k} \subseteq \overline{\g}$ and $\z(\overline{\u}) \subseteq \overline{\s}$, as claimed.
\end{proof}

% \begin{proof}
% Denote by $Z(U)$ the center of $U$ and by $G^*$ the connected subgroup of $U$ with Lie algebra $\g$. Consider
% \[
% \overline{U} = \frac{U}{K \cap Z(U)}
% \]
% $\overline{\sigma}$ the induced involution by $\sigma$ on $\overline{U}$, and the subgroups
% \[
% \overline{K} = \frac{K}{K \cap Z(U)} \qquad \overline{G} = \frac{G}{K \cap Z(U)}
% \]
% with Lie algebras $\overline{\k}$, $\overline{\g}$, respectively. It follows that
% \[
% \frac{U}{K} \simeq \frac{\overline{U}}{\overline{K}}
% \]
% %From $\u = \g \oplus \z(\u) =  \k \oplus \s$, since $d\sigma_1\g = \g$ and $d\sigma_1\z(\u) = \z(\u)$, it follows that
% From $\u = \g \oplus \z(\u) =  \k \oplus \s$, since $\sigma \g = \g$ and $\sigma \z(\u) = \z(\u)$, it follows that
% \[
% \u = (\g \cap \k) \oplus (\g \cap \s) \oplus (\z(\u) \cap \k) \oplus (\z(\u) \cap \s)
% \]
% Thus we get that $\overline{\u} \simeq (\g \cap \k) \oplus (\g \cap \s) \oplus (\z(\u) \cap \s)$, that $\overline{\g} \simeq \g$, that $\z(\overline{\u}) \simeq \z(\u) \cap \s$, that $\overline{\k} \simeq \g \cap \k$, and that $\overline{\s} \simeq \s$. 
% \end{proof}

Let $\widetilde{T}$ be the connected subgroup of $\widetilde{U}$ with Lie algebra $\t$. Denote by $\wt\sigma$ the lift of the involution $\sigma$ of $U$ to $\wt U$ so that
$\wt\pi \circ \wt\sigma = \sigma \circ \wt \pi$.

\begin{proposition}
\label{propos-polar-til}
%Assume that $d\widetilde{\sigma}_1 = d\sigma_1$, 
Assume that $\k \subseteq\g$, and that $\z(\u) \subseteq\s$. Then we have that $\widetilde{T} = \left(\widetilde{T} \cap G \right) \times Z$, that $\widetilde{K} = {\rm fix}(\widetilde{\sigma}) \subseteq G$, and we have the polar decomposition
\[
\widetilde{U} = \widetilde{K}\widetilde{T}\widetilde{K}
\]
\end{proposition}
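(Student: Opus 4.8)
The plan is to decompose everything along the splitting $\widetilde{U} = G \times Z$ and reduce the statement to the compact semisimple factor $G$. First I would verify the decomposition $\widetilde{T} = (\widetilde{T}\cap G)\times Z$: since $\t = \wh{\t}\oplus(\t\cap\z(\u))$ with $\wh{\t}=\t\cap\g$ and, under the hypothesis $\z(\u)\subseteq\s$, we have $\t\cap\z(\u)=\z(\u)$, the Lie algebra of $\widetilde{T}$ is $\wh{\t}\oplus\z(\u)$. Because $G$ and $Z$ are closed connected subgroups with complementary ideals as Lie algebras and $\widetilde{U}$ is their direct product, the connected subgroup integrating $\wh{\t}\oplus\z(\u)$ is exactly $(\widetilde{T}\cap G)\times Z$, where $\widetilde{T}\cap G$ is the connected subgroup of $G$ with Lie algebra $\wh{\t}$ (a genuine torus, since $G$ is compact).

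Next I would identify $\widetilde{K}$. Under the hypotheses $\k\subseteq\g$ and $\z(\u)\subseteq\s$, the involution splits as $\widetilde{\sigma}=\widetilde{\sigma}|_G\times\widetilde{\sigma}|_Z$, and since $\z(\u)\subseteq\s$ means $\widetilde{\sigma}|_Z = -\mathrm{id}$ on the vector group $Z$, which has only the identity as fixed point, we get ${\rm fix}(\widetilde{\sigma}) = {\rm fix}(\widetilde{\sigma}|_G)\times\{1\}\subseteq G$. By Proposition \ref{propos-fix-connected} this fixed-point set is connected, and it has Lie algebra $\k$ (using \eqref{eq-lifting-esp-sim}), so it equals $\widetilde{K}$; in particular $\widetilde{K}\subseteq G$.

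For the polar decomposition, the point is that $G$ with the involution $\widetilde{\sigma}|_G$ and fixed-point subgroup $\widetilde{K}$ is a compact symmetric space of compact type whose $(-1)$-eigenspace is $\s\cap\g$ with maximal torus $\wh{\t}$, i.e.\ $\widetilde{T}\cap G$. For compact symmetric spaces of compact type the polar (Cartan) decomposition $G = \widetilde{K}\,(\widetilde{T}\cap G)\,\widetilde{K}$ is standard: it follows from \eqref{eq-diag}, since every element of $G$ can be written $k_1\exp(X)$ with $X\in\s\cap\g$, and $\Ad(l)X\in\wh{\t}$ for some $l\in\widetilde{K}$ gives $\exp(X)=l^{-1}\exp(\Ad(l)X)l$ with $\exp(\Ad(l)X)\in\widetilde{T}\cap G$. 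Now writing an arbitrary element of $\widetilde{U} = G\times Z$ as $(g,z)$ with $g = k_1 t_0 k_2$, $k_i\in\widetilde{K}$, $t_0\in\widetilde{T}\cap G$, and using that $Z$ is central and $\widetilde{K}\subseteq G$, we get $(g,z) = k_1\,(t_0 z)\,k_2$ with $t_0 z\in(\widetilde{T}\cap G)\times Z = \widetilde{T}$, giving $\widetilde{U}=\widetilde{K}\widetilde{T}\widetilde{K}$.

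**Main obstacle.** The genuinely substantive input is the polar decomposition for the compact-type factor $G = \widetilde{K}\,(\widetilde{T}\cap G)\,\widetilde{K}$, i.e.\ surjectivity of $\widetilde{K}\times(\widetilde{T}\cap G)\to G$. The subtlety is that this requires knowing that $G = \widetilde{K}\exp(\s\cap\g)$ — that the Cartan decomposition at the group level is surjective on the compact semisimple factor — which is where connectedness of $G$ and $\widetilde{K}$ (the latter from Proposition \ref{propos-fix-connected}) is essential; working with the disconnected $K\subseteq U$ directly would fail, which is precisely why the reduction to $\widetilde{U}$ and to the case $\k\subseteq\g$ was set up in the preceding propositions. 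Everything else is bookkeeping with the direct-product structure.
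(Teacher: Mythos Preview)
Your proposal is correct and follows essentially the same route as the paper: split along $\widetilde{U}=G\times Z$, use $\k\subseteq\g$ together with Proposition~\ref{propos-fix-connected} to get $\widetilde{K}\subseteq G$, invoke the standard polar decomposition $G=\widetilde{K}(\widetilde{T}\cap G)\widetilde{K}$ on the compact semisimple factor, and then absorb the central $Z$ into $\widetilde{T}$. The only cosmetic difference is that you argue $\widetilde{K}\subseteq G$ via ${\rm fix}(\widetilde{\sigma}|_Z)=\{1\}$ while the paper argues it via connectedness of $\widetilde{K}$ plus $\k\subseteq\g$; both are immediate.
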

\begin{proof}
It is immediate that $\z(\u) \subseteq\t$, which implies that $\t = (\t \cap \g) \oplus \z(\u)$ and thus that $\widetilde{T} = \left(\widetilde{T} \cap G \right) \times Z$. Since $\wt U \to U$ is a covering, the Lie algebra of $\widetilde{K} = {\rm fix}(\widetilde{\sigma})$ is given by $\k$, which implies that  $\widetilde{K} \subseteq G$, since $\widetilde{K}$ is connected by Proposition \ref{propos-fix-connected}. Finally, since $G$ is compact and semi-simple and since $\t \cap \g \subseteq\s \cap \g$ is maximal abelian, it follows that
\[
G = \widetilde{K}\left(\widetilde{T} \cap G\right)\widetilde{K}
\]
Let $\pi : \widetilde{U} \to G$ be the natural projection homomorphism. It follows that $\pi(\widetilde{K}) = \widetilde{K}$, that $\pi(\widetilde{T}) = \widetilde{T} \cap G$, and thus that
\[
\pi(\widetilde{U}) = \pi\left(\widetilde{K}\widetilde{T}\widetilde{K}\right)
\]
This implies that, given $u \in \widetilde{U}$, there exist $k, l \in \widetilde{K}$, $h \in \widetilde{T}$, and $z \in Z$ such that $u = khlz$. Hence $u = khzl \in \widetilde{K}\widetilde{T}\widetilde{K}$, since $Z \subseteq\widetilde{T}$.
\end{proof}

We finish this section generalizing \eqref{eq-lemma-well-known} to $\wt T$.
Let $\wt W = N_{\wt K}(\wt T)$.

\begin{proposition}
\label{lemma:well-know-utilde}
% \begin{enumerate}[(i)]
% \item 
Let $\wt k \in \wt K$ and $\wt h \in \wt T$ be such that $\wt k \wt h \wt k^{-1} \in \wt T$, then
$$
\wt k \wt h \wt k^{-1} = \wt w \wt h \wt w^{-1}
$$
for some $\wt w \in \wt W$. Furthermore, $\wt\pi(\wt W) \subseteq W$.
\end{proposition}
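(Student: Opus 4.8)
The plan is to route the problem through the compact semisimple factor $G$ of $\wt U=G\times Z$, where Proposition~\ref{lemma:well-know} applies verbatim, and then to check that the central vector factor $Z$ is harmless. I work under the standing assumptions $\k\subseteq\g$ and $\z(\u)\subseteq\s$ of this subsection, so that by Proposition~\ref{propos-polar-til} we have $\wt K={\rm fix}(\wt\sigma)\subseteq G$ and $\wt T=(\wt T\cap G)\times Z$ with $Z$ central in $\wt U$. First I would observe that restricting $\wt\sigma$ to $G$ makes $(G,\wt K)$ a compact symmetric pair of compact type whose maximal flat torus is $\wh\t=\t\cap\g$ (which is maximal abelian in $\s\cap\g$), with corresponding torus subgroup $\wt T\cap G$. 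Since $Z$ is central, an element of $\wt K\subseteq G$ normalizes $\wt T\cap G$ iff it normalizes $(\wt T\cap G)\times Z=\wt T$; hence $N_{\wt K}(\wt T\cap G)=N_{\wt K}(\wt T)=\wt W$, so the normalizer representing the Weyl group of $(G,\wt K)$ is exactly $\wt W$.

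Given $\wt k\in\wt K$ and $\wt h\in\wt T$ with $\wt k\wt h\wt k^{-1}\in\wt T$, I would write $\wt h=\wt h_G z$ with $\wt h_G\in\wt T\cap G$ and $z\in Z$. Since $\wt k\in G$ and $z$ is central, $\wt k\wt h\wt k^{-1}=(\wt k\wt h_G\wt k^{-1})z$ with $\wt k\wt h_G\wt k^{-1}\in G$, and comparison with $\wt T=(\wt T\cap G)\times Z$ forces $\wt k\wt h_G\wt k^{-1}\in\wt T\cap G$. Applying Proposition~\ref{lemma:well-know} to the compact symmetric pair $(G,\wt K)$ produces $\wt w\in N_{\wt K}(\wt T\cap G)=\wt W$ with $\wt k\wt h_G\wt k^{-1}=\wt w\wt h_G\wt w^{-1}$, whence, $z$ being central,
\[
\wt w\wt h\wt w^{-1}=(\wt w\wt h_G\wt w^{-1})z=(\wt k\wt h_G\wt k^{-1})z=\wt k\wt h\wt k^{-1},
\]
which is the first claim. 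For the last claim, note that $\wt\pi(\wt K)$ is the connected subgroup of $U$ with Lie algebra $\k$, namely $K_0\subseteq K$, and $\wt\pi(\wt T)$ is the connected subgroup of $U$ with Lie algebra $\t$, namely $T$; hence for $\wt w\in\wt W$ one gets $\wt\pi(\wt w)\in K_0$ with $\wt\pi(\wt w)\,T\,\wt\pi(\wt w)^{-1}=T$, so $\wt\pi(\wt w)\in N_K(T)\subseteq N_K(\t)=M^*$ and acts on $\t$ as an element of $W=M^*/M$. Thus $\wt\pi(\wt W)\subseteq W$.

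The one genuinely delicate point is that such an argument cannot be run directly inside $\wt U$, which is non-compact whenever $U$ is not semisimple: the conjugacy of maximal flat tori underlying Proposition~\ref{lemma:well-know} is a compactness statement, so everything must first be pushed into $G$. The remaining verifications — that $(G,\wt K)$ is a compact symmetric pair to which Proposition~\ref{lemma:well-know} applies, and that $N_{\wt K}(\wt T\cap G)=\wt W$ — follow at once from the centrality of $Z$ together with the fact, recorded in Section~\ref{sec:prelim}, that $\wh\t$ is a maximal flat torus of the compact type symmetric Lie algebra $(\g,\k)$.
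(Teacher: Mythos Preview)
Your proof is correct and follows essentially the same route as the paper: split $\wt h$ along $\wt T=(\wt T\cap G)\times Z$, push the conjugation into the compact semisimple factor $G$, apply Proposition~\ref{lemma:well-know} to the compact symmetric pair $(G,\wt K)$, and use centrality of $Z$ to reassemble. Your explicit remark that $N_{\wt K}(\wt T\cap G)=N_{\wt K}(\wt T)=\wt W$ is a point the paper uses tacitly, and your justification of $\wt\pi(\wt W)\subseteq W$ matches the paper's one-line argument via $\wt\pi(\wt K)\subseteq K$ and $\wt\pi(\wt T)=T$.
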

\begin{proof}
Denote by $\wt T' = \widetilde{T} \cap G$.  By the previous proposition, we have that $\widetilde{T} = \wt T' \times Z$ and $\widetilde{K} \subseteq G$. Restricting $\wt\sigma$ to $G$, we get the compact Riemannian symmetric pair $(G,\widetilde{K})$ with maximal torus $\wt T'$. 

Let $\wt k \wt h \wt k^{-1} = \wt t \in \widetilde{T}$.  Decompose $\wt h = \wt h' z$, $\wt t = \wt t' w$, where $\wt h', \wt t' \in \wt T'$ and $z, w \in Z$. Since $Z$ is contained in the center of $\wt U$ we get that
$$
\wt k \wt h' \wt k^{-1} z = \wt t' w
$$
Since $\widetilde{U} = G \times Z$, it follows that $\wt k \wt h' \wt k^{-1}= \wt t'$. Applying Proposition \ref{lemma:well-know} to $(G,\widetilde{K})$ we get $\wt w \in \wt W$ such that $\wt k \wt h' \wt k^{-1} = \wt w \wt h' \wt w^{-1} $. It follows that
$$
\wt k \wt h \wt k^{-1} = \wt k \wt h' \wt k^{-1} z = 
\wt w \wt h' \wt w^{-1} z = \wt w \wt h'z \wt w^{-1} = \wt w \wt h \wt w^{-1}
$$
as claimed. Furthermore, since $\wt\pi(\wt K) \subseteq K$ and $\wt\pi(\wt T) = T$, it follows that $\wt\pi( N_{\wt K}(\wt T) ) \subseteq N_K(T)$.
\end{proof}

\subsection{Lifting to the Klein universal cover of $U/K$}

We now realize the Klein universal cover from $\widetilde{U}$ to $S$: a good understanding of this covering allows us to transfer some results from $U/K$ to $U$ and vice versa. For $U$ compact semisimple this is standard material (see Chapter V of \cite{borel}) and here we extend this for $U$ compact.

Note that the Cartan covering \eqref{def-geom-embeed} factors through
\begin{equation}
\label{def-alg-embeed}
\eta: U/K \to U, \quad uK \mapsto uu^* \quad (u \in U),
\end{equation}
and the natural homomorphism from to $U$ to the isometry group $I(U/K)$,
where now $u^*$ is defined in terms of the involution $\sigma$ of $U$.  
This map is well defined by \eqref{K-acao*} and is automatically $U$-equivariant, where on the right hand side $U$ acts on itself by the action \eqref{def-acao*}. By the same reasoning as in Section \ref{sec:prelim}, \eqref{def-alg-embeed} is a covering onto its image, with fiber ${\rm fix}(\sigma)/K$, which we will also call the Cartan covering.
%
% Note that \eqref{def-alg-embeed} is totally geodesic. In fact, for $X \in \s$, we have that
% \begin{equation}
% \label{geodes-mergulho}
% \eta(\exp_p(X)) = \eta(\exp(X)K) = \exp(2X)
% \end{equation}
% so that the geodesics of $S$ at $p$ on the left hand side, which are given by $\exp_p(tX)$, are mapped on the right hand side to $\exp(2tX)$, which are geodesics of $U$ at $1$ with double the speed. Since....
% %Argumentar com o grupo de isometria transitivo $U$. 

%Recall the following description of $\pi_1(U/K)$. 
Denote by  $\wt\pi: \wt U \to U$ the universal cover.
Let $\wt K = {\rm fix}(\wt \sigma)$ which is connected by Proposition \ref{propos-fix-connected}, so that
\begin{equation}
\label{def:covering0}
\wt\pi(\wt K) = {\rm fix}(\sigma)_0 \subseteq K.
\end{equation}
and $\wt U/ \wt K$ is simply connected, by the homotopy exact sequence.  It follows that $\wt\pi: \wt U \to U$ descends to the universal cover of $U/K$, which we denote by the same symbol
\begin{equation}
\label{def:covering1}
\wt\pi: \wt U/ \wt K \to U / K, \qquad   u \wt K \mapsto \wt \pi( u ) K \quad (u \in \wt U).
\end{equation}
Since $\wt K = {\rm fix}(\wt \sigma)$, the Cartan covering \eqref{def-alg-embeed} becomes the embedding
$$
\wt \eta: \wt U/ \wt K \to \wt U, \quad u \wt K \mapsto uu^*  \quad (u \in \wt U)
$$
Denoting by $\wt S$ the image of $\wt\eta$ in $\wt U$, we have the following commutative diagram of universal covers
$$
\begin{tikzcd}
\widetilde{U}/\widetilde{K}
\arrow[rr, "\wt\eta", "\sim"{below}]
\arrow[rd, "\wt\pi"{below}]
& & \arrow{ld}{\lambda}  \widetilde S \, \subseteq\widetilde U \\
& S  &                                             
\end{tikzcd}
$$
which defines the {Klein universal cover $\lambda$.}

% OK - PI JÁ ESTAVA SENDO USADO PARA A PROJEÇÃO DE U EM S

%Recall the following description of $\pi_1(U/K)$. 
We now furnish a group theoretic description of the fibers of these coverings.
Let $\wt T$ be the connected subgroup of $\wt U$ with Lie algebra $\t$ and let
$$
\wt F = \wt T \cap Z(\wt U)
$$
%Since $\wt U$ is semisimple we have that $Z(\wt U)$ is discrete, hence finite, thus $\wt F$ is finite. 
Below we adapt ideas from Sections 6.10 of  \cite{HI12} and Theorem V.4.5 of \cite{borel} to prove the following.
%(for completeness, we provide proof).  

\begin{theorem}
\label{thm:pi1}
%Let $M = U/K$ be a symmetric space with $U$ compact semisimple,  basepoint $o = K$ and infinitesimal data $(\g, \sigma)$. 
%We have the following commutative diagram of universal covers
%$$
%\begin{tikzcd}
%\widetilde{U}/\widetilde{K}
%\arrow[rr, "\wt\eta", "\sim"{below}]
%\arrow[rd, "\wt\pi"{below}]
%& & \widetilde M \subseteq\widetilde U \arrow{ld}{p} \\
%& M  &                                             
%\end{tikzcd}
%$$
%which defines the covering $p$,
%%$$
%%p(gg^*) = \wt \pi(g) o, \qquad (g \in \wt U)
%%$$
We have that $\wt \eta$ restricts to a group isomorphism
\begin{equation}
\label{thm:pi1:eq1}
\wt \eta: N_{\wt U}(\wt K)/\wt K \to \wt F
\end{equation}
where the corresponding groups act effectively by isometries on the right of the respective universal cover.

Furthermore, $\wt \eta$ restricts to an isomorphism between the corresponding fibers $\wt\pi^{-1}(p)$, $\lambda^{-1}(p)$, which are subgroups isomorphic to $\pi_1(U/K)$.
\end{theorem}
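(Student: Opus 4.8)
The goal is to produce the isomorphism $\wt\eta\colon N_{\wt U}(\wt K)/\wt K \to \wt F$ together with the effectiveness and the identification of fibers. The overall approach is to exploit that $\wt\eta\colon \wt U/\wt K \to \wt S$ is a diffeomorphism (since $\wt K = \fix(\wt\sigma)$, by the discussion preceding the statement), so that questions about the deck group of the Klein cover $\lambda$ can be transported to questions about subgroups of $\wt U$ acting by the $*$-action \eqref{def-acao*}. I would first show that $u\wt K \mapsto uu^*$ intertwines the right action of $N_{\wt U}(\wt K)/\wt K$ on $\wt U/\wt K$ (by $u\wt K \cdot n = un\wt K$, well-defined precisely because $n$ normalizes $\wt K$) with right translation by $nn^*$ on $\wt S$; concretely, $(un)(un)^* = u\,(nn^*)\,u^{*\,}$ using $u^*{}^{-1}=u$ is false in general, so one must be careful: instead use $(un)(un)^* = u\,n\,\sigma(n^{-1})\sigma(u^{-1}) = u\,(nn^*)\,u^*$, and this equals $(uu^*)\cdot$(something) only after checking $nn^*$ is central. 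That centrality is the crux: for $n \in N_{\wt U}(\wt K)$, one shows $nn^* \in Z(\wt U)$ by noting $nn^* \in \fix(\sigma)$ forces — wait, $nn^*=1$ iff $n\in\fix(\sigma)$, so instead the argument is that $n$ conjugates $\wt K$ to itself, hence $\sigma(n)\wt K\sigma(n)^{-1}=\wt K$ too (applying $\sigma$), and from $n^{-1}\sigma(n) = (n^* n)^{-1}\cdot(\text{stuff})$ one extracts that $n^{-1}\sigma(n)$ centralizes $\wt K$; since $\wt K$ is connected with Lie algebra $\k$, and $\k$ generates — modulo the center — one deduces $n^{-1}\sigma(n)\in Z(\wt U)$, i.e. $nn^* = \sigma(n^{-1}\sigma(n))^{-1}$ is central. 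This lands $nn^*$ in $Z(\wt U)$, and since $n\in\wt T$ can be arranged (by the polar decomposition $\wt U = \wt K\wt T\wt K$ of Proposition \ref{propos-polar-til} applied to the $\wt K$-coset, so every class in $N_{\wt U}(\wt K)/\wt K$ has a representative in $\wt T$), we get $nn^* \in \wt T \cap Z(\wt U) = \wt F$.

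Next I would check the map $\bar\eta\colon N_{\wt U}(\wt K)/\wt K \to \wt F$, $n\wt K \mapsto nn^*$, is a homomorphism: for $n$ a $\wt T$-representative, $n^* = \sigma(n^{-1}) = n^{-1}$ since $\wt T\subseteq\s$-side — more precisely $\sigma$ acts as inversion on $\wt T$ because $\t\subseteq\s$, hence $nn^* = n^2$, and on $\wt T$ (an abelian group) $n\mapsto n^2$ is a homomorphism. Then for general classes one reduces to $\wt T$-representatives using that the group law on $N_{\wt U}(\wt K)/\wt K$ restricted to $\wt T\cap N_{\wt U}(\wt K) = \wt T$ (every element of $\wt T$ normalizes $\wt K$? — this needs the observation that $\wt T$ commutes with $\wt K$ only up to $\wt W$, so actually one must show $\wt F = \wt T\cap Z(\wt U)$ genuinely normalizes $\wt K$, which is clear since central elements normalize everything, and that the $\wt T$-representative of a class is unique mod $\wt F$). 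Injectivity: $nn^* = 1 \Rightarrow n\in\fix(\sigma)$ by \eqref{isotropia-acao*}, and $n\in\fix(\wt\sigma)=\wt K$, so $n\wt K$ is trivial. Surjectivity: given $z\in\wt F$, it is central so normalizes $\wt K$, and since $\sigma$ inverts $\wt T$ we need $z = nn^*$ for some $n$; taking $n$ a square root of $z$ in $\wt T$ works if $z$ is a square, but in general one instead observes $\wt\eta(z\wt K) = zz^* = z^2$ is not obviously $z$ — so the correct surjectivity argument is: $\wt\eta$ is a diffeomorphism $\wt U/\wt K \to \wt S$, $\wt F$ acts on $\wt S$ by right translation (it preserves $\wt S$ since $\wt S = \{uu^*\}$ and $z\cdot uu^* = (uu^*)z$ with $z$ central equals $(uz^{1/2})(uz^{1/2})^*$... again needs square roots). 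I will instead argue surjectivity via the deck-transformation count: $\wt F$ and $N_{\wt U}(\wt K)/\wt K$ both inject into the isometry group of the simply connected $\wt U/\wt K$ as the full deck group of $\lambda$, so equality of the two images as the deck group forces the injection $\bar\eta$ to be onto.

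**Effectiveness and fibers.** For effectiveness: $N_{\wt U}(\wt K)/\wt K$ acts on $\wt U/\wt K$ by $u\wt K\cdot n\wt K = un\wt K$; if $un\wt K = u\wt K$ for all $u$ then $n\in\wt K$, so the action is effective, and correspondingly $\wt F$ acts effectively by right translation on $\wt S\subseteq\wt U$ because $\wt F\subseteq Z(\wt U)$ and right translation by a nontrivial group element is nontrivial. For the fiber statement: $\wt\pi^{-1}(p)$ is, by definition of $p=\wt K\in\wt U/\wt K$ and \eqref{def:covering1}, the set of $u\wt K$ with $\wt\pi(u)\in K$, i.e. $u\in\wt\pi^{-1}(K) = \wt\pi^{-1}(\fix(\sigma)_0)$ after \eqref{def:covering0}; one identifies this with the deck group of $\lambda$ over $p$, which is a subgroup of $N_{\wt U}(\wt K)/\wt K$, and under $\wt\eta$ maps isomorphically onto $\lambda^{-1}(p)\subseteq\wt S$. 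That this fiber is isomorphic to $\pi_1(U/K)$ is the standard fact that the deck group of a universal cover over a point is $\pi_1$ of the base; since $\wt U/\wt K$ is simply connected (noted before the statement) and $\lambda$ is a covering, $\lambda^{-1}(p)\cong\pi_1(S)$. The main obstacle I anticipate is the centrality claim $nn^*\in Z(\wt U)$ for $n\in N_{\wt U}(\wt K)$ — handling this cleanly requires the polar decomposition to reduce to $n\in\wt T$ (where $\sigma$ acts by inversion, making everything transparent) together with the connectedness of $\wt K$ from Proposition \ref{propos-fix-connected}; once $n\in\wt T$ one has $nn^* = n^2\in\wt T$ and centrality follows from $n^2$ commuting with $\wt K$, which in turn follows because $n$ normalizes $\wt K$ and $\sigma(n)=n^{-1}$ also normalizes it, so $n^2 = n\cdot\sigma(n)^{-1}$ acts trivially by conjugation on the Lie algebra $\k$ — combined with $n^2\in\wt T$ and $\t\cap\k = 0$ this pins $n^2$ into the center.
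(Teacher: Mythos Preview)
Your approach is essentially the paper's: reduce to $\wt T$-representatives via the polar decomposition (Proposition \ref{propos-polar-til}), use that $\sigma$ inverts $\wt T$ so $\wt\eta(h\wt K)=h^2$ for $h\in\wt T$, and characterize $\wt T\cap N_{\wt U}(\wt K)$ as those $h$ with $h^2\in\wt F$. Your final centrality argument is correct in substance (for $h\in\wt T\cap N_{\wt U}(\wt K)$ and $X\in\k$ one has $\Ad(h)X\in\k$, hence $\Ad(h)X=\sigma(\Ad(h)X)=\Ad(\sigma(h))X=\Ad(h^{-1})X$, so $\Ad(h^2)|_\k=\mathrm{id}$), though the earlier attempts in your outline are garbled.

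The genuine gap is surjectivity. Your fallback deck-group argument is circular: at this point in the proof neither $N_{\wt U}(\wt K)/\wt K$ nor $\wt F$ has been identified with the full deck group of any covering, so you cannot deduce that your injection is onto by comparing cardinalities of deck groups. The square-root approach you abandoned is in fact the correct one and does work: $\wt T$ is a connected abelian Lie group (by Proposition \ref{propos-polar-til} it is $(\wt T\cap G)\times Z$, a compact torus times a vector group), so every element of $\wt T$ has a square root in $\wt T$. Given $f\in\wt F$, pick $h\in\wt T$ with $h^2=f$; then $h^2\in Z(\wt U)$ centralizes $\wt K$, and the computation above run backwards shows $h\in N_{\wt U}(\wt K)$ with $\wt\eta(h\wt K)=f$. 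This is exactly how the paper closes the argument.

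A secondary remark: your homomorphism argument via $\wt T$-representatives can be made to work (using that $h_2\in N_{\wt U}(\wt K)$ lets you commute $k_1$ past $h_2$ modulo $\wt K$), but the paper's route is cleaner. Once you know $\wt\eta(n\wt K)\in\wt F\subseteq Z(\wt U)$, equivariance of $\wt\eta$ under the $*$-action gives directly
\[
\wt\eta(un\wt K)=u\,\wt\eta(n\wt K)\,u^{*}=uu^{*}\,\wt\eta(n\wt K)=\wt\eta(u\wt K)\,\wt\eta(n\wt K),
\]
avoiding all representative bookkeeping.
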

\begin{proof}
Let $\wt\pi: \wt U \to U$, we claim that $\wt\pi^{-1}(K) \subseteq N_{\wt U}(\wt K)$.  Indeed, let $\wt\pi(u) \in K$. By \eqref{def:covering0} we have that
$$
\wt\pi(u \wt K u^{-1}) \subseteq\wt\pi(u) K \wt\pi(u)^{-1} = K,
$$
so that $u \wt K u^{-1}$ is connected with the same Lie algebra as $\wt K$, thus $u \wt K u^{-1} = \wt K$, as claimed.  
Going back to $\wt\pi: \wt U/\wt K \to U/K$, it follows that $\wt\pi^{-1}(p)  \leqslant N_{\wt U}(\wt K)/\wt K$.  By the commutativity of the diagram, we have that
\begin{equation}
\label{proof:thm-pi1:eq0}
\lambda^{-1}(p) = \wt \eta( \wt\pi^{-1}(p) ) \subseteq\wt \eta( N_{\wt U}(\wt K) /  \wt K ).
\end{equation}

%v. Borel p.102

Now we prove that
\begin{equation}
\label{proof:thm-pi1:eq1}
N_{\wt U}(\wt K) = (\wt T \cap N_{\wt U}(\wt K)) \wt K,
\end{equation}
where
\begin{equation}
\label{proof:thm-pi1:eq2}
\wt T \cap N_{\wt U}(\wt K) = \{ h \in \wt T:\, h^2 \in \wt F \}.
\end{equation}
For \eqref{proof:thm-pi1:eq1}, let $u \in N_{\wt U}(\wt K)$ and consider the polar decomposition $u = k h l$, where $k, l \in \wt K$ and $h \in \wt T$, given by Proposition \ref{propos-polar-til}. Since $u, k, l$ normalize $\wt K$, it follows that $h \in \wt T \cap N_{\wt U}(\wt K)$.  Furthermore, $k^{-1} u = hl$, where $k^{-1} u = u m$, for $m \in \wt K$, so that $u = h (lm^{-1})$, with $lm^{-1} \in \wt K$, as claimed.
For \eqref{proof:thm-pi1:eq2}, first note that
\begin{equation}
\label{proof:thm-pi1:eq3}
\wt T \cap Z_{\wt U}(\wt K) = \wt T \cap Z(\wt U),
\end{equation}
which follows from the polar decomposition of $u = khl  \in \wt U$, since $\wt h \in \wt T \cap Z_{\wt U}(\wt K)$ commutes with $h$, $k$ and $l$.
Now let $h \in \wt T$ and $k \in \wt K$, then $h^{-1} k h \in \wt K$ iff 
$$
h^{-1} k h = \wt \sigma( h^{-1} k h ) = h k h^{-1}
$$
iff $h^2 k h^{-2} = k$. It follows that
$$
\wt T \cap N_{\wt U}(\wt K) = \{ h \in \wt T:\, h^2 \in \wt T \cap Z_{\wt U}(\wt K)  \}
$$
and the claim follows from \eqref{proof:thm-pi1:eq3}.

For $h \in \wt T$ we have that $\wt \eta(h \wt K) = h^2$, so that \eqref{proof:thm-pi1:eq1} and \eqref{proof:thm-pi1:eq2} imply $\wt \eta( N_{\wt U}(\wt K) / \wt K ) \subseteq\wt F$.  
Since elements in $\wt T$ have square roots in $\wt T$, equality follows from \eqref{proof:thm-pi1:eq1} and \eqref{proof:thm-pi1:eq2}.  It follows that \eqref{thm:pi1:eq1} is a bijection. To prove that it is a group homomorphism, let $u \in \wt U$ and $n \in N_{\wt U}(\wt K)$.
Since $\wt \eta(n \wt K) \in \wt F \subseteq Z(\wt U)$, it follows by $\wt U$-equivariance that
$$
\wt \eta(un \wt K) = u \wt \eta(n \wt K) u^* = uu^* \wt \eta(n \wt K) = \wt \eta(u \wt K) \wt \eta(n \wt K), 
$$
which implies the claim.  From \eqref{proof:thm-pi1:eq0} it then follows that $\lambda^{-1}(p) \leqslant \wt F$ so that $\wt \eta$ restricts to a group isomorphism between $\wt\pi^{-1}(p)$ and 
$\lambda^{-1}(p)$, as claimed.

By the homotopy exact sequence $\pi_1(\wt U/ \wt K)$ is trivial, since $\wt K$ is connected. It follows that both $\wt \pi$ and $\lambda$ are coverings, so that $\pi_1(U/K)$ is isomorphic to both $\wt \pi^{-1}(p)$ and $\lambda^{-1}(p)$.
\end{proof}

% \begin{remark}
% It follows from the previous result that all compact symmetric spaces with infinitesimal data $(\u,\k)$ are given by the quotient of the Klein universal cover $\wt M$ by a subgroup $\wt E$ of $\wt F$, 
% where $\pi_1(\wt M/ \wt E) = \wt E$ (see Theorem V.4.5 of \cite{borel}).
% %By the homotopy exact sequence, it follows that $\pi_0(K) \leqslant \wt F$.
% \end{remark}

\begin{remark}
Note that $h \in \wt T \cap \wt K$ iff $h^* = h^{-1} = h$, iff $h^2 = 1$.  Thus, a square root $a \in \wt T$ of $h \in \wt T$, that is, such that $h = a^2$, is defined mod $\wt K$.  Indeed, if $h = a^2 = b^2$, for $a, b \in \wt T$, then $(ab^{-1})^2 = 1$ so that $a = b$ mod $\wt K$.  Denote $\sqrt{h} = a$ mod $\wt K$, then by \eqref{proof:thm-pi1:eq1} and  \eqref{proof:thm-pi1:eq2} we have the isomorphism
$$
\wt F \to \wt T \cap N_{\wt U}(\wt K)/\wt K, \qquad f \mapsto \sqrt{f},
$$
which is the inverse to $\wt \eta$, through which $\wt F$ acts in the universal cover $\wt U/\wt K$.
\end{remark}

% % Qual o melhor lugar dessa Proposição? Talvez nas Preliminares, com o básico de covering.
% \begin{proposition}
% Let $\gamma \in \Gamma$, then $\alpha(2\gamma) \in \Z$ for all $\alpha \in \Pi$.
% \end{proposition}
% \begin{proof}
% $\exp(\gamma) \in K$ implies $\eta( \exp(\gamma) ) = \exp(2\gamma) = 1$, thus
% $\Ad(\exp(2\gamma)) = e^{\ad(2\gamma)} = 1$ has eigenvalues $e^{2\pi i \alpha(2\gamma)} = 1$. 
% \end{proof}

\subsection{Projecting back to $U/K$}
We finish this section by extending \eqref{eq-lemma-well-known} to the Weyl group $W$ of a compact symmetric space $U/K$.
%This is the main result of this section.

\begin{theorem}
\label{theorem:well-know}
Let $M = U/K$ be a symmetric space with $U$ compact and  basepoint $p = K$. If $k \in K$ and $h \in T$ are such that $kh p \in T p$, then
$$kh p = wh p$$
for some $w \in W$ in the Weyl group of $U/K$.
\end{theorem}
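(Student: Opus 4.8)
The plan is to lift the hypothesis to the universal cover $\wt U$, apply the already-established version of the statement there (Proposition \ref{lemma:well-know-utilde}), and project back down to $U/K$. First I would reduce to the normalized situation $\k \subseteq \g$, $\z(\u) \subseteq \s$, which is harmless by the Proposition following \eqref{eq:u-g} since it does not change $U/K$, the torus $\t$, the roots, or the Weyl group $W$. In this setting Proposition \ref{propos-polar-til} gives the polar decomposition $\wt U = \wt K \wt T \wt K$ and $\wt K = {\rm fix}(\wt\sigma) \subseteq G$ is connected.

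Next I would choose lifts. The hypothesis $kh p \in Tp$ means there is $h' \in T$ with $khK = h'K$ in $U/K$, i.e.\ $h' = khk'$ for some $k' \in K$; equivalently $kh(k')^{-1}$ normalizes nothing in particular but we can phrase it as: there exists $k'' \in K$ with $k'' k h = h'$ modulo... — cleaner is to work with the element $g = khk'^{-1}$ if $khp \in Tp$ forces $khk'^{-1} \in T\,{\rm fix}(\sigma)/\!\sim$; to avoid this subtlety I would instead pass directly to $\wt U/\wt K$. Lift $h \in T \subseteq U$ to $\wt h \in \wt T$ and $k \in K$: since $\wt\pi(\wt K) = {\rm fix}(\sigma)_0 \subseteq K$ but $K$ may be disconnected, I would first use Proposition \ref{propos:m-cc-k}, writing $k = k_0 m$ with $k_0 \in K_0$ and $m \in M$; then $khk^{-1}$ acts on $\t$ the same way as $k_0 h k_0^{-1}$ modulo the centralizer $M$ of $\t$, so it suffices to treat $k \in K_0 = \wt\pi(\wt K_0)$ — wait, $\wt\pi(\wt K) = {\rm fix}(\sigma)_0$ which need not be all of $K_0$; but $K_0 = {\rm fix}(\sigma)_0$ is false in general too. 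The safe route: the Weyl group $W = M^*/M$ can be computed as $N_{K_0}(\t)/M_0'$ and, as recalled at the start of Section \ref{sec:weyl-group-action}, $W = N_{K_0}(\t)M/M$, so any element acting on $\t$ by an element of $K$ is realized by $K_0$ up to $M$ (which acts trivially on $\t$). Thus I may assume $k \in K_0 = \wt\pi(\wt K)$ after the normalization — here I would invoke that on the normalized symmetric space $K_0 = {\rm fix}(\sigma)_0$ since $K$ lies between ${\rm fix}(\sigma)_0$ and ${\rm fix}(\sigma)$ and the connected component is ${\rm fix}(\sigma)_0$.

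With $k \in K_0$ lifted to $\wt k \in \wt K$ and $h \in T$ lifted to $\wt h \in \wt T$, the hypothesis $kh p \in Tp$ translates, via the covering \eqref{def:covering1} and the polar decomposition, into the statement that $\wt k \wt h \wt k^{-1}$ lies in $\wt T \cdot (\wt T \cap \wt K)$ — more precisely, $khp \in Tp$ lifts to $\wt k \wt h \wt K \in \wt T \wt K$, which by Proposition \ref{propos-polar-til} and the fact that $\wt T \cap \wt K$ consists of involutions means $\wt k \wt h \wt k^{-1} \cdot (\text{element of }\wt T\cap\wt K) \in \wt T$, hence after absorbing that finite-order element (which lies in $\wt T$) we get $\wt k \wt h \wt k^{-1} \in \wt T$ outright, possibly after replacing $\wt h$ by $\wt h$ times an element of $\wt T \cap \wt K$. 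Now Proposition \ref{lemma:well-know-utilde} applies: $\wt k \wt h \wt k^{-1} = \wt w \wt h \wt w^{-1}$ for some $\wt w \in \wt W = N_{\wt K}(\wt T)$, and $\wt\pi(\wt W) \subseteq W$. Projecting by $\wt\pi$ and then to $U/K$: $\wt\pi(\wt w) =: w$ lies in (a representative of) $W$, and $\wt\pi(\wt k \wt h \wt k^{-1}) p = khp$ while $\wt\pi(\wt w \wt h \wt w^{-1})p = whp$, giving $khp = whp$ as required. The main obstacle, and the step I would write most carefully, is the bookkeeping in the previous step: correctly translating the orbit condition $khp \in Tp$ on $U/K$ into a conjugacy condition $\wt k\wt h\wt k^{-1} \in \wt T$ upstairs, since the fibers of $\lambda$ and $\wt\pi$ involve $\wt F = \wt T \cap Z(\wt U)$ and the square-root ambiguity mod $\wt K$ described in the Remark after Theorem \ref{thm:pi1}; one must check that the element of $\wt T \cap \wt K$ that appears can indeed be reabsorbed into the torus element without affecting the conclusion, which works precisely because $\wt T \cap \wt K \subseteq \wt T$ and $W$ already contains the reflections needed.
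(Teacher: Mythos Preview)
Your overall architecture --- reduce to $k\in K_0$ via Proposition~\ref{propos:m-cc-k}, lift to $\wt U$, apply Proposition~\ref{lemma:well-know-utilde}, project --- is exactly the paper's. The gap is precisely the step you flag as ``the main obstacle'', and your proposed bookkeeping there does not work.

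Concretely: even granting that $\wt k\,\wt h\,\wt p\in\wt T\wt p$ (which is true but already requires the structural result $\wt\pi^{-1}(K)\subseteq N_{\wt U}(\wt K)=(\wt T\cap N_{\wt U}(\wt K))\wt K$ from the proof of Theorem~\ref{thm:pi1}), all you obtain is $\wt k\,\wt h=\wt t'\,\wt l$ for some $\wt t'\in\wt T$ and $\wt l\in\wt K$. This gives $\wt k\,\wt h\,\wt k^{-1}=\wt t'(\wt l\,\wt k^{-1})$, and there is no reason for $\wt l\,\wt k^{-1}$ to lie in $\wt T$, nor can it be absorbed by modifying $\wt h$ by an element of $\wt T\cap\wt K$: the obstruction is an arbitrary element of $\wt K$, not a torus element. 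So Proposition~\ref{lemma:well-know-utilde}, whose hypothesis is $\wt k\,\wt h\,\wt k^{-1}\in\wt T$, cannot be applied to $\wt h$.

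The paper resolves this by passing through the Cartan embedding $\wt\eta(u\wt K)=uu^*$, which \emph{squares} torus elements: $\wt\eta(\wt k\,\wt h\,\wt p)=\wt k\,\wt h^{2}\,\wt k^{-1}$. Since $\wt k\,\wt h\,\wt p$ and $\wt h'\wt p$ lie in the same $\wt\pi$-fibre, Theorem~\ref{thm:pi1} says their $\wt\eta$-images differ by an element of $\wt F\subseteq\wt T$, whence $\wt k\,\wt h^{2}\,\wt k^{-1}=\wt h'^{2}f\in\wt T$. Now Proposition~\ref{lemma:well-know-utilde} applies to $\wt h^{2}$, giving $\wt k\,\wt h^{2}\,\wt k^{-1}=\wt w\,\wt h^{2}\,\wt w^{-1}$, i.e.\ $\wt\eta(\wt k\,\wt h\,\wt p)=\wt\eta(\wt w\,\wt h\,\wt p)$; injectivity of $\wt\eta$ on $\wt U/\wt K$ then yields $\wt k\,\wt h\,\wt p=\wt w\,\wt h\,\wt p$, and projecting gives $khp=whp$. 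The squaring is the missing idea: it is what turns the coset condition $\wt k\,\wt h\in\wt T\wt K$ into the genuine conjugacy condition required by Proposition~\ref{lemma:well-know-utilde}.
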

\begin{proof}
Let $k h p = h' p$, for some $h' \in T$.  By Proposition \ref{propos:m-cc-k}, since $M \subseteq K$ centralizes $T$, we can assume w.l.o.g that $k \in K_0$.
We now use the notation of and results of Subsection \ref{sec:lift}.
Since $\wt K$ is connected, we have that $\wt\pi(\wt K) = K_0$. Let then $\wt k \in \wt K$, 
$\wt h \in \wt T$, 
$\wt h' \in \wt T$ such that 
$$
\wt \pi( \wt k ) = k, \quad \wt \pi( \wt h ) = h, \quad \wt \pi( \wt h' ) = h'.
$$
Denote the basepoint of $\wt U/\wt K$ by $\wt p = \wt K$, since 
$$
\wt \pi( \wt k \wt h \wt p ) = k h p = h' p = \wt \pi( \wt h' \wt p ) 
$$
it follows that $\wt k \wt h \wt p$  and $\wt h' \wt p$ in $\wt U/ \wt K$ are in the same fiber of $\wt \pi$. By Theorem \ref{thm:pi1}, their image by $\wt \eta$ are in the same fiber of the Klein universal cover $\lambda$, so that there exists $f \in \wt F \subseteq\wt T$ such that 
$$
\wt k \wt h^2  \wt k^{-1} = \wt \eta( \wt k \wt h \wt p )
= \wt \eta( h' \wt p )f = h'^2 f 
$$
where $h'^2 f  \in \wt T$. By Proposition \ref{lemma:well-know-utilde}, there exists $\wt w \in \wt W$, such that
$$
\wt k \wt h^2  \wt k^{-1} = \wt w \wt h^2  \wt w^{-1} 
$$
Thus
$$
\wt \eta( \wt k \wt h \wt p ) =\wt \eta( \wt w \wt h \wt p ) 
\quad \Longrightarrow \quad \wt k \wt h \wt p  = \wt w \wt h \wt p,
$$
since $\wt \eta$ is an embedding. Projecting back to $U/K$ we get that $khp = w hp$, for $w = \wt\pi(\wt w) \in W$, as claimed.
\end{proof}

\section{Inverse image of the exponential map}

In this section we will describe the inverse image of the exponential map $\exp_p: \s \to S$ given by \eqref{eq-exp-riemanniana-S}, as a disjoint union of focal orbits passing through $\t$, and will count their dimensions and connected components by inspecting data in $\t$.

Given $H \in \s$, recall that by \eqref{eq-diag} we can assume w.l.o.g.\ that $H \in \t$. Let $h = \exp(H)$ and $q = hp \in S$. Recall the focal orbit ${\mathcal F}(H) = K^q H$ given by \eqref{eq-def-focal-orbit}. Recall the lattice $\Gamma$ given by \eqref{def:lattice} and the centralizer $W^q$ of $H$ mod $\Gamma$ given by \eqref{def:centralizer}. 

%OK - está na introdução, botei referência.
%PRECISAMOS DEFINIR A ÓRBITA FOCAL EM ALGUM LUGAR, ANTES ESTAVA DEFINIDA, MAS AGORA NÃO ESTOU ACHANDO. 

% in the same manner as \cite{SS18}...
% for a class of compact symmetric spaces.
%We say that a compact symmetric space $M$ is {\em embeedable} in $U$
%when $M = U/K$ and the totally geodesic Cartan covering \eqref{def-alg-embeed} of $M$ into $U$ is an embeeding.  Equivalently, when $M = U/K$ with $K = {\rm fix}(\sigma)$.

% FEITO: SEÇÕES 4 E 5 DEVERIAM SER FUNDIDAS E ALGUMAS DEFINIÇÕES VIREM PARA O INÍCIO DA SEÇÃO UNIFICADA

\begin{theorem}
\label{thm:inverse-image}
Given $q = \exp(H)p$, we have that
$$
\exp_p^{-1}(q) = \bigcup_{\gamma \in \Gamma} {\mathcal F}(H + \gamma)
$$
where two such focal orbits intersect, and then coincide, if and only of their respective $H + \gamma$ lie in the same $W^q$-orbit.
\end{theorem}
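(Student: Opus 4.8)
The plan is to establish the set equality first and then the intersection/coincidence statement. For the equality $\exp_p^{-1}(q) = \bigcup_{\gamma \in \Gamma} \mathcal{F}(H+\gamma)$, I would argue both inclusions. For ``$\supseteq$'', note that if $X = k(H+\gamma)$ for $k \in K^q$ and $\gamma \in \Gamma$, then $\exp_p(X) = \exp(kX)p$ reduces (by $K^q$-equivariance of $\exp_p$ and the fact that $k$ fixes $q$) to showing $\exp_p(H+\gamma) = q$; and $\exp_p(H+\gamma) = \exp(H+\gamma)p = \exp(H)\exp(\gamma)p = \exp(H)p = q$ using that $\t$ is abelian and that $\exp(\gamma)p = p$ by the definition \eqref{def:lattice} of $\Gamma$. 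For ``$\subseteq$'', take $X \in \s$ with $\exp_p(X) = q$; by the diagonalization \eqref{eq-diag} we may conjugate $X$ by some $l \in K_0$ into $\t$, but we must be careful: conjugating $X$ also moves $q$. So instead I would first show that any $X \in \exp_p^{-1}(q)$ lies in the $K^q$-orbit of some element of $\t$. Writing $X' = \Ad(l)X \in \t$ and $q' = \exp_p(X') = \exp(l)q$, we have $\exp(X')p = \exp(l)\exp(X)p$. The key point is to compare $q'$ and $q$: both are images under $\exp_p$ of vectors in $\t$, and one shows $q' = q$ up to the $W$-action, hence up to $K^q$ — this is exactly where Theorem \ref{theorem:well-know} (the Weyl-group diagonalization on $U/K$) enters. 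Concretely, having $\exp(X')p = kq$ for suitable $k$, one uses Theorem \ref{theorem:well-know} to replace $k$ by a Weyl element, after which $X'$ and the relevant torus elements differ by an element of $\Gamma$ and a Weyl reflection. I would then conclude $X \in \mathcal{F}(H+\gamma)$ for some $\gamma$.

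For the intersection statement, the forward direction is the substantive one. Suppose $\mathcal{F}(H+\gamma) \cap \mathcal{F}(H+\gamma') \neq \emptyset$, say $k(H+\gamma) = k'(H+\gamma')$ for $k, k' \in K^q$. Then $k''(H+\gamma) = H+\gamma'$ for $k'' = k'^{-1}k \in K^q$. Now $k''$ maps an element of $\t$ into $\t$, and both $\exp(H+\gamma)p = q$ and $\exp(H+\gamma')p = q$, so $k''$ fixes $q = \exp(H+\gamma)p$ and sends $H+\gamma \in \t$ to $H+\gamma' \in \t$. Applying \eqref{eq-lemma-well-known} (or Theorem \ref{theorem:well-know}) gives $w \in W$ with $w(H+\gamma) = H+\gamma'$; and then $w(H+\gamma) - (H+\gamma) = \gamma' - \gamma \in \Gamma$, so $w \in W^q$ by definition \eqref{def:centralizer}. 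Hence $H+\gamma$ and $H+\gamma'$ are in the same $W^q$-orbit. Conversely, if $H+\gamma' = w(H+\gamma)$ for $w \in W^q$, lift $w$ to $\tilde m \in M^* \subseteq K$; since $wH \in H + \Gamma$ one checks $\tilde m \in K^q$ — indeed $\tilde m q = \exp(wH)p$, and $wH - H \in \Gamma$ together with $\exp$ on the abelian $\t$ gives $\tilde m q = q$ — so $H + \gamma' = \tilde m(H+\gamma) \in \mathcal{F}(H+\gamma)$, forcing the two focal orbits (both $K^q$-orbits) to coincide.

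The main obstacle I anticipate is the ``$\subseteq$'' inclusion, specifically the bookkeeping that when we diagonalize $X \in \exp_p^{-1}(q)$ into $\t$ using an element $l \in K_0$ we move the target point to $\exp(l)q$ rather than keeping it at $q$; reconciling this requires invoking Theorem \ref{theorem:well-know} to convert the isometry $l$ (composed with whatever brings $\exp(l)q$ back to $q$) into a genuine Weyl-group element acting on $\t$, and only then can one read off the $\Gamma$-translate. Everything else — the $\supseteq$ inclusion, and both directions of the intersection dichotomy — follows fairly directly from equivariance of $\exp_p$, the abelianness of $\t$, the definition \eqref{def:lattice} of $\Gamma$, the definition \eqref{def:centralizer} of $W^q$, and the already-established diagonalization results \eqref{eq-lemma-well-known} and Theorem \ref{theorem:well-know}.
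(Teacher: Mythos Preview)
Your proposal is correct and follows essentially the same route as the paper: the $\supseteq$ inclusion by direct equivariance, the $\subseteq$ inclusion by diagonalizing $X$ into $\t$ via some $k\in K_0$ and then invoking Theorem~\ref{theorem:well-know} to replace $k$ by a Weyl element (so that $k^{-1}w\in K^q$), and the intersection dichotomy via \eqref{eq-lemma-well-known} together with $W$-invariance of $\Gamma$. One small slip: in the forward intersection argument you write $w(H+\gamma)-(H+\gamma)=\gamma'-\gamma$, but what you need is $wH-H\in\Gamma$; this follows since $w(H+\gamma)-(H+\gamma)=(wH-H)+(w\gamma-\gamma)$ and $w\gamma-\gamma\in\Gamma$ by $W$-invariance of the lattice, so the conclusion $w\in W^q$ still holds.
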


%We will now present two proofs for the first part: one is extrinsic and the other intrinsic.
%\end{proof}
%
%
%\begin{proof}[Extrinsic Proof:]
%Embed $M = U/K$ in $U$.
%% a maximal abelian in s, is a maximal torus in g?
%and note that $\Gamma = 2 \Gamma$.  Indeed, $\gamma \in \Gamma$ iff $\exp(\gamma) \in K$ iff
%$$
%\eta(\exp(\gamma)) = \exp(2\gamma) = 1
%$$
%iff $2\gamma \in \Gamma$. 
%Let $X \in \s$ such that $\exp_o(X) = \exp(H) o$, applying $\eta$ we get that $\exp(2X) = \exp(2H)$. From the result in $U$ (Theorem \ref{thm-grps}), we then get that $2X = \Ad(g)(2H + 2\gamma)$, so that
%$$
%X = \Ad(g)(H + \gamma)
%$$
%for $\gamma \in \Gamma$, $h = \exp(H)$ and $g \in U_h$.  
%Let us show that in the above equation, we can replace $g \in U_h$ by an element of $K_{ho}$.
%In fact, there exists $k \in K$ such that $H' = \Ad(k)X \in \t$, thus
%$$
%H' = \Ad(kg)(H + \gamma)
%$$
%and it follows that
%$$
%H' = \Ad(kg)(H + \gamma) = \Ad(w)(H + \gamma)
%$$
%for an appropriate $w \in K$ representing an element of $W$.  Then
%$$
%\Ad(g)(H + \gamma) = \Ad(k^{-1}w)(H + \gamma)
%$$
%since $h o = \exp(H + \gamma) o$, we have that
%\begin{eqnarray*}
%ho & = & ghg^{-1} o \\
%& = & \exp(\Ad(g)(H + \gamma) ) o \\
%\end{eqnarray*}
%k^{-1}w h o = k^{-1}w \exp(H + \gamma) o = \exp( \Ad(k^{-1}w)(H + \gamma) )o 
%= \exp( \Ad(g)(H + \gamma) ) o  = g h g^{-1} o = ho
%$$
%thus, $k^{-1}w \in K_{ho}$, as claimed.
%\end{proof}
%
%
%... herdar o resultado de U, 

%... com dimensões divididas por 2 por conta de \eqref{geodes-mergulho}.

\begin{proof}
First recall that $h = \exp(H)$ and note that the whole focal orbit
$$
{\mathcal F}(H + \gamma)
$$
exponentiates to $h p$ since, for $k \in K^q$, we have
$$
\exp( \Ad(k)(H + \gamma) )p = k\exp(H + \gamma)k^{-1}p = k\exp(H)p =kq = q 
$$
Reciprocally, let $\exp(X)p = h p$. There exists $k \in K$ such that $\Ad(k)X \in \t$ so that $\exp(\Ad(k)X)p = k h p \in Tp$.  
By Theorem \ref{theorem:well-know}, we have that $khp$ can be realized by 
$$
k h p = w h p
$$
for some $w \in W$, which we can consider belonging to $K$. It follows that $k^{-1} w \in K^q$.  In the above equation, replacing $hp$ by $\exp(X)p$ in the left hand side and by $\exp(H)p$ in the right hand side, we get that 
$$
\exp( \Ad(k) X ) p  = \exp( w H )p
$$
Since both $\Ad(k) X$ and $w H$ belong to $\t$, it follows that
$$
\Ad(k) X = w H + \gamma' = w( H + \gamma ) 
$$
for $\gamma = w^{-1} \gamma' \in \Gamma$, since $\Gamma$ is $W$-invariant. Then 
$$
X = \Ad(k^{-1})w(H + \gamma ) = 
\Ad(k^{-1}w) (H + \gamma )
$$
where we have seen that $k^{-1} w \in K^q$.  This proves thet $X$ is in the focal orbit ${\mathcal F}(H + \gamma)$, as claimed.

For the second part, if ${\mathcal F}(H + \gamma)$ and ${\mathcal F}(H + \gamma')$ intersect, then $H  + \gamma' = \Ad(u)(H + \gamma)$ for some $u \in K^q$.
By \eqref{eq-lemma-well-known}, it follows that $\Ad(u)(H + \gamma) = w(H + \gamma)$ for some $w \in W$, so that 
$$
H + \gamma' = w(H + \gamma) = wH + \gamma''
$$
where $\gamma'' = w \gamma \in \Gamma$, since $\Gamma$ is $W$-invariant.  Thus we get 
$$
wH = H + \gamma' - \gamma'',
$$
from which it follows that $w \in W^q$, as claimed.
Reciprocally, let $H + \gamma' = w(H + \gamma)$ for some $w \in W^q$.  Since $w$ can be realized by $u \in K$, we have that $H + \gamma' = \Ad(u)(H + \gamma)$. We get that $q = hp = u hp = uq$ and then $u \in K^q$. It follows that $H + \gamma' \in  \Ad(K^q)(H + \gamma) = {\mathcal F}(H + \gamma)$, as claimed.
\end{proof}

%\begin{remark}
%Note that $h = \exp(H)$ in $U$ does not play a role here, only the point $ho \in To$ matters. Perhaps it is better for notation to use a point $h_o = \exp(H) o \in To$ instead.
%\end{remark}
%

Consider the centralizers $K_H$ of $H$ and $K_h$ of $h$, so that
\begin{equation}
\label{eq:centralizers}
K_H \subseteq K_h \subseteq K^q
\end{equation}
Note that ${\mathcal F}(H)$ is diffeomorphic to $K^q/K_H$, where $K^q$ fixes the endpoints $p$ and $q$ while $K_H$ fixes pointwise the geodesic $\exp_p(tH)$, $t \in [0,1]$.
%Now let $h = \exp(H)$ and consider the centralizers $K_H$ of $H$ and $K_h$ of $h$, so that $K_H \subseteq K_h \subseteq K^q$, 
Recall from Equation \eqref{eq-algebra-centralizador} the Lie algebras of the centralizers
\begin{equation}
\label{eq-algebra-centralizador-2}
\k_H = \m \oplus \sum_{\alpha(H) = 0} \k_\alpha
\qquad
\k_h = \m \oplus \sum_{\alpha(H) \in 2\pi\Z} \k_\alpha
\end{equation}
where $\alpha$ ranges through the positive restricted roots.
Since the Cartan covering \eqref{def-alg-embeed} takes $k q = q$ to $kh^2k^{-1} = h^2$, it follows that the isotropy $K^q$ is contained in $K_{h^2}$ with the same Lie algebra
\begin{equation}
\label{eq-algebra-isotropia}
\k^q = \k_{h^2} = \m \oplus \sum_{\alpha(H) \in \pi\Z} \k_\alpha
\end{equation}
Note that the sum runs through the diagram hyperplanes that cross $H$.

Recall that elements of the Weyl group $W$ are represented by the centralizer $M_*$ of $\t$ in $K$, that is, $W = \Ad(M_*)|_{\t}$. Let
$$
M_*^q = M_* \cap K^q
$$
The next result shows that this subgroup intersects each connected component of $K^q$. Recall the subgroup $W^q_0$ generated by reflections around root hyperplanes paralell to the diagrams hyperplanes which cross $H$.

\begin{proposition}\label{propos:compon-conexas-k-q}
$K^q = K^q_0 M_*^q$ where
$$
W^q = \Ad(M_*^q)|_{\t} \qquad\text{and}\qquad  
W^q_0 = \Ad(K^q_0 \cap M_*^q)|_{\t}
$$
\end{proposition}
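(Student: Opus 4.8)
The plan is to realize $K^q$, up to connected components, as the isotropy group of a compact symmetric subpair of $(U,K)$, and then to run the argument of Proposition \ref{propos:m-cc-k} and the Weyl group theory of Subsection \ref{Weyl-group} inside that subpair. Recall from \eqref{eq-algebra-isotropia} that $K^q = Z_K(h^2)$, with Lie algebra $\k^q = \m \oplus \sum_{\alpha(H)\in\pi\Z}\k_\alpha$. Since $\sigma(h^2) = h^{-2}$, the connected centralizer $L := Z_U(h^2)_0$ is $\sigma$-invariant, so $\sigma$ restricts to an involution $\sigma^q$ of $L$; I would set $L^K = \mathrm{fix}(\sigma^q)$. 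Using that $\Ad(h^2) = e^{\ad(2H)}$ acts on $\u_\alpha$ by $e^{2i\alpha(H)}$, the decomposition \eqref{eq:root-decompos-ks} gives $\l = \m \oplus \t \oplus \sum_{\alpha(H)\in\pi\Z}(\k_\alpha \oplus \s_\alpha)$, with $\sigma$-eigenspaces $\k^q$ and $\s^q := \t \oplus \sum_{\alpha(H)\in\pi\Z}\s_\alpha$. So $(L, L^K_0)$ is a compact symmetric pair, $\t$ is a maximal torus in $\s^q$ (being maximal abelian in $\s \supseteq \s^q$), one has $L^K_0 = K^q_0$ (both are connected with Lie algebra $\k^q$, and $L^K_0 \subseteq \mathrm{fix}(\sigma)_0 \cap Z_U(h^2) \subseteq K^q$ while $K^q_0 \subseteq L^K$), and the restricted root system of $(\l,\k^q)$ relative to $\t$ is $\Pi^q := \{\alpha \in \Pi : \alpha(H)\in\pi\Z\}$, with the same root vectors as for $(\u,\k)$.

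With this in hand I would first prove $K^q = K^q_0 M_*^q$, mimicking the proof of Proposition \ref{propos:m-cc-k}: given $k \in K^q$, the subspace $\Ad(k)\t$ is abelian in $\s$, and since $k$ centralizes $h^2$ it preserves $\l$ while, being in $K$, it preserves $\s$, so $\Ad(k)\t \subseteq \l \cap \s = \s^q$ is maximal abelian there. Applying the conjugacy of maximal tori (Theorem II.2.3 of \cite{borel}, cf.\ \eqref{eq-diag}) to the symmetric pair $(L, K^q_0)$ produces $l \in K^q_0$ with $\Ad(l)\Ad(k)\t = \t$, whence $lk \in N_K(\t)\cap K^q = M_*^q$ and $k = l^{-1}(lk) \in K^q_0 M_*^q$; the reverse inclusion is immediate.

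Next I would identify the Weyl groups. Since $M_*^q \subseteq N_K(\t)$, restricting $\Ad$ to $\t$ maps $M_*^q$ into $W$, and I claim the image is $W^q$: if $m \in M_*^q$ and $w = \Ad(m)|_\t$, then $\exp(wH)p = mhp = hp = \exp(H)p$, so $\exp(wH-H)\in K$, i.e.\ $wH-H \in \Gamma$ and $w \in W^q$ by \eqref{def:centralizer}; conversely, if $w \in W^q$ is realized by $m \in N_K(\t)$, then $wH = H+\gamma$ with $\exp(\gamma)\in K$, so $mhp = \exp(wH)p = \exp(H)p = hp$ and $m \in M_*^q$. For the second equality I would use that $K^q_0 \cap M_*^q = N_{K^q_0}(\t)$, and that by the Weyl group description of Subsection \ref{Weyl-group} applied to $(L, K^q_0)$ the group $\Ad(N_{K^q_0}(\t))|_\t$ is generated by the reflections $r_\alpha$ with $\alpha \in \Pi^q$, that is, by the reflections around the root hyperplanes parallel to the diagram hyperplanes $\alpha = \alpha(H)$ crossing $H$, which is precisely $W^q_0$; hence $W^q_0 = \Ad(K^q_0\cap M_*^q)|_\t$.

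The step I expect to be the main obstacle is the bookkeeping in the first paragraph: checking carefully that $(L, L^K_0)$ is a genuine compact symmetric pair, that $L^K_0$ really coincides with $K^q_0$ even though both $L$ and $K^q$ may be disconnected, and that the restricted root system of this subpair relative to $\t$ is exactly $\Pi^q$, so that its reflection group is $W^q_0$. Once those identifications are secured, the conjugacy argument and the two inclusions for $W^q$ are routine verifications.
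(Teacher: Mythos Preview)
Your approach is essentially the same as the paper's: build an auxiliary compact symmetric pair from the centralizer of $h^2$ in $U$, observe that its restricted root system relative to $\t$ is $\Pi^q=\{\alpha\in\Pi:\alpha(H)\in\pi\Z\}$, and then run the conjugacy-of-maximal-tori argument of Proposition~\ref{propos:m-cc-k} inside it; the identifications of $W^q$ and $W^q_0$ are verified exactly as you outline. One small correction: equation~\eqref{eq-algebra-isotropia} only gives the inclusion $K^q\subseteq K_{h^2}=Z_K(h^2)$ (via the Cartan covering), not equality---but only that inclusion is needed, so your argument that $\Ad(k)$ preserves $\l$ for $k\in K^q$ goes through unchanged. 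The paper works with the full (possibly disconnected) centralizer $U'=Z_U(h^{\pm 2})$ and first argues $K^q=K^{q'}$ to see $\sigma$-invariance, whereas you take the connected centralizer $L=Z_U(h^2)_0$ and check $\sigma$-invariance directly from $\sigma(h^2)=h^{-2}$; this is a harmless and slightly cleaner variant of the same construction.
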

\begin{proof}
For the fisrt statement, let $q' = \exp(-H)p$, we claim that $K^q \subseteq K^{q'}$. Indeed, let $u \in K$ be such that $uq = q = \exp(H)p$. Applying the reflection of $U/K$ at $p$ we get
$$
s(u \exp(H)p) = \sigma(u)\exp(-H)p = u q' = s(\exp(H)p) = \exp(-H)p = q'
$$
Analogously we have $K^{q} \supseteq K^{q'}$, and thus $K^q = K^{q'}$.
Thus, by the covering $\eta: U/K \to U$ \eqref{def-alg-embeed}, it follows that $K^q$ is contained in
$$
\text{$U'=$ centralizer of $\eta(q) = h^2$ and $\eta(q') = h^{-2}$ in $U$}
$$
which is clearly closed and $\sigma$-invariant. Its Lie algebra then splits as 
$\u' = \k' \oplus \s'$, where $\k' = \u' \cap \k$ and $\s' = \u' \cap \s$.
It follows that $\k' = \k_{h^{\pm 2}} = \k_{h^2} = \k^q$, the Lie algebra $\k^q$ of $K^q$. Clearly, $\s'$ contains $\t$.  Let $u \in K^q$, then $\Ad(u)\t \subseteq\s'$ is a maximal torus. By Proposition \ref{propos:m-cc-k} applied to the symmetric pair $(U', K^q)$, there exists $v \in K^q_0$ such that $\Ad(vu)\t = \t$. Thus $vu = w \in K^q \cap M_* = M_*^q$ and $u = v^{-1} w$, with $v \in K^q_0$, which proves the first statement.

For the second statement, let $w \in W^q$ so that $wH = H + \gamma$, $\gamma \in \Gamma$. 
Let $v \in M_*$ represent $w$.
Thus, $vq = v\exp(H)v^{-1}p = \exp(wH)p = \exp(H + \gamma)p = \exp(H)p = q$, so that $v \in K^q$. 
Reciprocally, for $v \in M_*^q$ we have $\exp(\Ad(v)H)p = v\exp(H)p = vq = q = \exp(H)p$. Thus, $\Ad(v)H = H + \gamma$, $\gamma \in \Gamma$, so that $\Ad(v) \in W^q$,
as claimed.

% !!!!!!!
% OK - ESCREVER MELHOR ESSA PARTE DE QUEM É u', usando que U' é centralizador de h^2 e h^{-2}
% !!!!!!
For the third statement, since $\u'=\u_{h^{\pm 2}}$ it follows from 
\eqref{eq:centralizador-u} that
$$
(\u')_\C = \u'_0 \oplus \sum \{ \u'_\alpha:\, \alpha(2H) \in 2\pi\Z \} 
$$
Since $\t \subseteq \s'$, it follows that the restricted roots of $(\u', \k')$ are $\Pi' = \{ \alpha \in \Pi:\, \alpha(H) \in \pi\Z \}$, so that the Weyl group of $(\u', \k')$ is $W^q_0$. On the other hand, since $\k' = \k^q$, it follows that the Weyl group of $(\u', \k')$ is given by $\Ad$ of
$
K^q_0 \cap M_* = (K^q_0 \cap K^q) \cap M_* = K^q_0 \cap M_*^q
$,
as claimed.
\end{proof}

% It follows that $W^q_0$ is a normal subgroup of $W^q$. Furthermore, the connected component group of $K^q$ is 
% \begin{equation}
% \label{cc-isotropia}
% \frac{K^q}{K^q_0} = \frac{K^q_0 W^q}{K^q_0} = \frac{W^q}{K^q_0 \cap W^q} = 
% \frac{W^q}{W^q_0}
% \end{equation}

The dimension and connected components of a focal orbit $\mathcal{F}(H) = K^q H$ can be read in $\t$ as follows.

\begin{theorem}
\label{propos:cc-focal}
$\mathcal{F}(H)$ intersects $\t$ in the orbit $W^q H$ and its connected components are $K^q_0wH$, $w \in W^q$. They are in bijection with the quotient group $W^q/W^q_0$ where each connected component corresponds to a left $W^q_0$-orbit in $W^q H$ and is diffeomorphic to $K^q_0 H$, which has dimension
\begin{equation}
\label{eq:dim-focalorbit}
\dim {\mathcal F}(H)  = \sum m_\alpha
\end{equation}
where $m_\alpha$ is the multiplicity of the root $\alpha$ and the sum runs through the non-root hyperplanes of the Stiefel diagram that cross $H$. 
\end{theorem}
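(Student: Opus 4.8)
The plan is to analyze the focal orbit $\mathcal{F}(H) = K^q H$ by combining the structural results already established about $K^q$ and its relation to the Weyl group data. First I would establish that $\mathcal{F}(H)$ meets $\t$ precisely in $W^q H$: one inclusion is clear since $W^q = \Ad(M_*^q)|_\t$ by Proposition \ref{propos:compon-conexas-k-q} and $M_*^q \subseteq K^q$; for the reverse inclusion, if $\Ad(u)H \in \t$ with $u \in K^q$, then by \eqref{eq-lemma-well-known} we have $\Ad(u)H = wH$ for some $w \in W$, and the computation $\exp(wH)p = \exp(\Ad(u)H)p = uq = q = \exp(H)p$ forces $wH \in H + \Gamma$, i.e. $w \in W^q$.

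Next I would identify the connected components. Since $K^q = K^q_0 M_*^q$ by Proposition \ref{propos:compon-conexas-k-q}, the orbit decomposes as $\mathcal{F}(H) = K^q H = \bigcup_{w \in W^q} K^q_0 \, wH$, so the sets $K^q_0 wH$ (which are connected, being continuous images of the connected $K^q_0$) are the connected components — provided distinct ones are disjoint. Two of them, $K^q_0 w H$ and $K^q_0 w' H$, coincide exactly when $w' H \in K^q_0 wH$, equivalently $(w')^{-1}$ composed appropriately lands $wH$ in the $K^q_0$-orbit; intersecting with $\t$ and using \eqref{eq-lemma-well-known} together with the third statement of Proposition \ref{propos:compon-conexas-k-q} — which says $W^q_0 = \Ad(K^q_0 \cap M_*^q)|_\t$ — this happens exactly when $w^{-1}w' \in W^q_0$. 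Hence the components are in bijection with the left cosets $W^q/W^q_0$, each corresponding to a left $W^q_0$-orbit in $W^q H$, and all are diffeomorphic to the identity component $K^q_0 H \cong K^q_0/(K^q_0 \cap K_H)$.

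For the dimension formula I would compute $\dim K^q_0 H = \dim \k^q - \dim \k_H$ using the root-space decompositions \eqref{eq-algebra-isotropia} and \eqref{eq-algebra-centralizador-2}: $\k^q = \m \oplus \sum_{\alpha(H) \in \pi\Z} \k_\alpha$ and $\k_H = \m \oplus \sum_{\alpha(H)=0}\k_\alpha$, so the difference is $\sum_{\alpha(H) \in \pi\Z,\ \alpha(H)\neq 0} \dim \k_\alpha = \sum m_\alpha$ where $\alpha$ runs over positive restricted roots with $\alpha(H) \in \pi\Z \setminus \{0\}$. Such a root contributes precisely a non-root hyperplane of the Stiefel diagram passing through $H$ (the hyperplane $\alpha = \alpha(H)$, which is parallel to but distinct from the root hyperplane $\alpha = 0$), and conversely every non-root diagram hyperplane through $H$ arises this way; since $\dim \k_\alpha = m_\alpha$, this gives \eqref{eq:dim-focalorbit}.

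The main obstacle I anticipate is the bookkeeping in the second step: carefully showing that distinct cosets $wW^q_0$ give genuinely disjoint connected components, which requires passing back and forth between the orbit in $\s$, its intersection with $\t$, and the group-level statements of Proposition \ref{propos:compon-conexas-k-q}, and invoking \eqref{eq-lemma-well-known} to control which $K^q$-translates of $wH$ remain in $\t$. One must be slightly careful that the normal subgroup $W^q_0 \trianglelefteq W^q$ makes left and right cosets agree so that "$W^q/W^q_0$" is unambiguous, which is exactly the content of the earlier proposition that $W^q_0$ is normal in $W^q$; the diffeomorphism between different components then follows because left translation by a representative of $w$ in $M_*^q$ carries $K^q_0 H$ equivariantly onto $K^q_0 wH$.
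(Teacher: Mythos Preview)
Your proposal is correct and follows essentially the same route as the paper: both use Proposition \ref{propos:compon-conexas-k-q} to write $K^q = K^q_0 M_*^q$, identify $\mathcal{F}(H)\cap\t = W^q H$ via \eqref{eq-lemma-well-known}, and compute the dimension from the root decompositions \eqref{eq-algebra-centralizador-2}--\eqref{eq-algebra-isotropia}. The only point the paper makes explicit that you leave implicit is the inclusion $W_H \subseteq W^q_0$ (needed so that equality of \emph{orbits} $W^q_0 wH = W^q_0 w'H$ collapses to equality of \emph{cosets} $wW^q_0 = w'W^q_0$, via the double coset $W^q_0\backslash W^q/W_H$); and when you ``intersect with $\t$'' to get $K^q_0 wH \cap \t = W^q_0 wH$, note that \eqref{eq-lemma-well-known} must be applied to the auxiliary symmetric pair $(U',K^q_0)$ built inside the proof of Proposition \ref{propos:compon-conexas-k-q} (whose Weyl group is $W^q_0$), not to $(U,K)$ directly.
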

\begin{proof}
For the first statement, let $\Ad(k)H \in \t$, for $k \in K^q$. By \eqref{eq-lemma-well-known} there exists $w \in W$ such that $\Ad(k)H = wH$:
since the left hand side exponentiates to $q = \exp_p(H)$, it follows that $\exp_p(wH) = \exp_p(H)$. Thus, there exists $\gamma \in \Gamma$ such that 
$wH = H + \gamma$, so that $w \in W^q$, as claimed.
%%%

For the second statement, note that the centralizer $W_H$ is generated by reflections around root hyperplanes that cross $H$, so that $W_H \subseteq W^q_0$.  It follows from the previous Theorem that the connected components of $\mathcal{F}(H)$ are $K^q_0 w H$, $w \in W^q$. Thus they are in bijection with the double coset $W^q_0 \backslash W^q / W_H$. 
Since $W^q_0$ is normal, it follows that
$$
W^q_0 w W_H = w W^q_0 W_H = w W^q_0
$$
so that the double coset is in bijection with $W^q/W^q_0$.
For the last statement, let $w \in W^q$ be represente by $v \in  M_*^q$. Since conjugation preserves the identity component, it follows that each connected component of $\mathcal{F}(H)$ satisfies
$$
K^q_0 w H = v( v^{-1} K^q_0 v H ) = v( K^q_0 H )
$$
since conjugation by $v$ leaves invariant the connected component of the identity, thus is diffeomorphic to $K^q_0 H$, as claimed. Their dimension is then
$\dim \k^q/\k_H$, which by \eqref{eq-algebra-centralizador-2} and \eqref{eq-algebra-isotropia} is given by
$$
\dim \k^q/\k_H = \sum_{\alpha(H) \in \pi\Z - 0} m_\alpha 
$$
as claimed.
\end{proof}

%Homotopy types of geodesics... (Fazer primeiro seção anterior).

\begin{remark}
For a compact Lie group $U$, the isotropy of the focal orbit $\mathcal{F}(H)$ is the Lie algebra centralizer $U_H$, which is connected, while the isotropy of the symmetric space $U$ is the Lie group centralizer $U_h$ (see Example \ref{ex:grupos}), thus the connected components of $\mathcal{F}(H)$ are in bijection with those of the isotropy $U_h$.
This does not hold in general for compact symmetric spaces (see Example \ref{ex:rank1}). What does hold in general is the above established bijection between the connected components of the focal orbit $\mathcal{F}(H)$ and the quotient group $W^q/W^q_0$.

% In the compact group case \cite{SS18}, the isotropy $K_H$ of $\mathcal{F}(H)$ is connected, so that the connected components of $\mathcal{F}(H)$ are in bijection with those of $K^q$, but this is not the case for general compact symmetric spaces: see Example \ref{ex:rank1}. What holds in general is the bijection between the connected components of $\mathcal{F}(H)$ and the quotient group $W^q/W^q_0$.

% In the compact group case \cite{SS18}, the isotropy $K_H$ of $\mathcal{F}(H)$ is connected, so that the connected components of $\mathcal{F}(H)$ are in bijection with those of $K^q$.  This is not the case in general since $K$ itself can be disconnected. What holds in general is the bijection between the connected components of $\mathcal{F}(H)$ and the quotient group $W^q/W^q_0$.

% Note also that $\mathcal{F}(H)$ is a flag manifold.
\end{remark}

\section{Fundamental group}
%Mencionar resultado do Neeb: Section 7.1 of 
%2022-From Euler elements and 3-gradings to non-compactly causal symmetric spaces

In this section we generalize to compact symmetric spaces a well known description of the fundamental group of compact type symmetric spaces as a quotient of lattices: this is not immediate from the compact type case since the lattices in $\t$ do not need to be adapted to the splitting into center and its complement. We then use this result to investigate the connected components of $K^q$.

Let $\t_{\mathrm{reg}}$ be the complement in $\t$ of the Stiefel diagram and denote by $S_{\mathrm{reg}} = K \exp_p(\t_{\mathrm{reg}})$ be the regular elements of $S$. Note that the left action by $W$ and translation by $\Gamma$ leaves $\t_{\mathrm{reg}}$ invariant (see Proposition \ref{propos:gamma-inv}), thus so does the affine action of the semidirect product $\Gamma \rtimes W$. Also, $W$ acts on the right of $K/M$: it is the right action of $W = M_*/M$ on the principal bundle $K/M \to K/M_*$. 
It follows that $\Gamma \rtimes W$ acts in the right of $ \t_{\mathrm{reg}} \times K/M$ by the right $W$-action on $K/M$ and by the affine action on $\t_{\mathrm{reg}}$:
$$
(H, kM) \cdot (\gamma,w)= (\gamma + w^{-1}H, kwM)
$$
%action on the associated bundle
This action is free since $(\gamma + w^{-1}H, kwM) = (H, kM)$ implies $wM = M$, so that $w \in M$, thus $w^{-1}H = H$, which implies $\gamma + H = H$ so that $\gamma = 0$.

\begin{proposition}
\label{prop:pi1-reticulado}
The map
$$
\Psi: \t_{\mathrm{reg}} \times K/M \to S_{\mathrm{reg}},
\qquad
(H, kM) \mapsto k\exp_p(H)
$$
is the quotient map of the right $\Gamma \rtimes W$-action.
In particular, $\Psi$ is a covering of $S_{\mathrm{reg}}$ with fiber $\Gamma \rtimes W$.
\end{proposition}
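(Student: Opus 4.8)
The plan is to show that $\Psi$ is a surjective local diffeomorphism whose fibers are precisely the $\Gamma\rtimes W$-orbits; since that action is free and properly discontinuous, $\Psi$ then factors as the orbit projection followed by a diffeomorphism of the quotient onto $S_{\mathrm{reg}}$, which is the assertion. Well-definedness on the quotient is a short computation: writing $w\in W$ via a representative $n\in M_*\subseteq K$ (so $\Ad(n)$ induces $w$ on $\t$ and $np=p$) and using $\exp(\delta)p=p$ for $\delta\in\Gamma$ together with the $W$-invariance of $\Gamma$, one gets
\[
\Psi\big(\gamma+w^{-1}H,\,kwM\big)=kn\exp(\gamma)\exp(w^{-1}H)p=k\exp(w\gamma)\exp(H)p=k\exp(H)p=\Psi(H,kM),
\]
where we used $n\exp(\gamma)n^{-1}=\exp(w\gamma)$, $n\exp(w^{-1}H)n^{-1}=\exp(H)$, $np=p$, that $\t$ is abelian, and $\exp(w\gamma)p=p$. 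Surjectivity is immediate from the definition $S_{\mathrm{reg}}=K\exp_p(\t_{\mathrm{reg}})$, and $k\exp_p(H)$ depends only on $kM$ because $M$ centralizes $\t$.

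The core step is the identification of fibers with orbits. Suppose $\Psi(H,kM)=\Psi(H',k'M)$ with $H,H'\in\t_{\mathrm{reg}}$; set $u=(k')^{-1}k\in K$, so $u\exp(H)p=\exp(H')p\in Tp$. By Theorem~\ref{theorem:well-know} there is $w\in W$, with representative $n\in M_*$, such that $u\exp(H)p=n\exp(H)p$, hence $n^{-1}u$ lies in $K^q$, the isotropy of $q=\exp_p(H)$. Here regularity of $H$ is essential: by \eqref{eq-algebra-isotropia} it forces $\k^q=\m$, so by Proposition~\ref{propos:compon-conexas-k-q} one has $K^q=K^q_0 M_*^q\subseteq M_*$, whence $u\in M_*$. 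Putting $w'=\Ad(u)|_\t\in W$ and reading $u\exp(H)p=\exp(w'H)p=\exp(H')p$ off the equality $u\exp(H)p=\exp(H')p$ gives $H'-w'H\in\Gamma$, while $k=k'u$ gives $k'M=kwM$ for $w=(w')^{-1}$; thus $(H',k'M)=(H,kM)\cdot(\gamma,w)$ with $\gamma=H'-w'H\in\Gamma$. Combined with freeness (already verified) and well-definedness, $\Psi$ descends to a bijection of $(\t_{\mathrm{reg}}\times K/M)/(\Gamma\rtimes W)$ onto $S_{\mathrm{reg}}$.

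Next I would verify that $\Psi$ is a local diffeomorphism, by factoring $\Psi=\exp_p\circ\phi$ with $\phi(H,kM)=\Ad(k)H$. At $(H,eM)$ the differential is $d\phi(X,Z)=X+[Z,H]$ for $X\in\t$, $Z\in\sum_\alpha\k_\alpha$; by Lemma~\ref{lemma:sakai} the map $Z\mapsto[Z,H]$ sends $\k_\alpha$ isomorphically onto $\s_\alpha$ whenever $\alpha(H)\neq 0$, which holds for regular $H$, so $d\phi$ surjects onto $\t\oplus\sum_\alpha\s_\alpha=\s$ and, since $\dim(\t_{\mathrm{reg}}\times K/M)=\dim S$, is an isomorphism; $K$-equivariance of $\phi$ covers all base points, so $\phi$ is a local diffeomorphism onto an open $K$-invariant set $\Omega\subseteq\s$ of regular vectors. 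On $\Omega$ the Riemannian exponential $\exp_p$ is a local diffeomorphism: $d(\exp_p)_H$ is invertible for $H\in\t_{\mathrm{reg}}$ by the standard diagonalization of the Jacobi equation along geodesics in $\t$ (the relevant factors $\sin(\alpha(H))/\alpha(H)$ over the roots being nonzero off $\mathcal{S}$), and the general point of $\Omega$ is $\Ad(k)$ of such an $H$, so invertibility propagates by equivariance. Hence $\Psi$ is a surjective local diffeomorphism.

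Finally, since $T=\exp(\t)$ is a compact torus, $\Gamma$ contains its unit lattice and so is a full lattice in $\t$; therefore $\Gamma\rtimes W$ acts freely and properly discontinuously on $\t_{\mathrm{reg}}\times K/M$, the orbit projection $\t_{\mathrm{reg}}\times K/M\to(\t_{\mathrm{reg}}\times K/M)/(\Gamma\rtimes W)$ is a covering, and the induced bijective local diffeomorphism of the quotient onto $S_{\mathrm{reg}}$ is a diffeomorphism. Composing, $\Psi$ is a covering with fiber $\Gamma\rtimes W$, as claimed. I expect the main obstacle to be the fiber computation: promoting the plain equality $\Psi(H,kM)=\Psi(H',k'M)$ in $S$ to the exact relation $(H',k'M)=(H,kM)\cdot(\gamma,w)$, where the inclusion $K^q\subseteq M_*$ forced by regularity of $H$, together with Theorem~\ref{theorem:well-know}, is precisely what makes it work.
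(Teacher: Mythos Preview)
Your proof is correct and in fact more complete than the paper's. The main structural difference is in the fiber computation: the paper invokes the full inverse-image description (Theorem~\ref{thm:inverse-image}) to write $\Ad(k^{-1}k')H'=\Ad(u)(H+\gamma)$ for some $u\in K^q$, then uses regularity to collapse the focal orbit to a single $\Gamma$-translate, and finally applies \eqref{eq-lemma-well-known} to $\Ad(k^{-1}k')$. You instead appeal directly to Theorem~\ref{theorem:well-know} at the level of $Tp$ to land $u=(k')^{-1}k$ in $M_*K^q$, and then use regularity plus Proposition~\ref{propos:compon-conexas-k-q} to force $K^q\subseteq M_*$, so $u\in M_*$ outright. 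This is a cleaner shortcut: it bypasses the focal-orbit machinery, which is overkill here since for regular $H$ those orbits are zero-dimensional anyway.

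You also supply two ingredients the paper leaves implicit: the verification that $\Psi$ is a local diffeomorphism (via the factorization $\exp_p\circ\phi$ and the root-space computation of $d\phi$), and the observation that $\Gamma\rtimes W$ acts properly discontinuously so that the induced bijection on the quotient is a diffeomorphism. The paper's proof establishes only that fibers coincide with orbits and tacitly treats the ``in particular'' covering claim as automatic; your argument actually closes that gap.
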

\begin{proof}
Clearly, $\Psi(\gamma + w^{-1}H, k wM) = \Psi(H,kM)$.  Reciprocally, let $\Psi(H,kM) = \Psi(H',k'M)$, with $H, H' \in \t_{\mathrm{reg}}$. Then $k\exp_p(H) = k'\exp_p(H')$ so that
$$
q = \exp_p(H) = \exp_p(\Ad(k^{-1}k') H')
$$
where $q \in S_{\mathrm{reg}}$.
By Theorem \ref{thm:inverse-image} it follows that
$$
\Ad(k^{-1}k') H' = \Ad(u)(H + \gamma)
$$
for some $\gamma \in \Gamma$ and $u \in K^q$.
Since $q \in S_{\mathrm{reg}}$, it follows from 
equation \eqref{eq-algebra-isotropia} that $\k^q = \m$, so that $(K^q)_0 = M_0$. 
By Proposition \ref{propos:compon-conexas-k-q} it then follows that 
$$
K^q = M_0 M_*^q
$$
where $M_*^q$ acts in $\t$ as $W_q$ while $M$ centralizes $\t$, so that
$$
\Ad(u)(H + \gamma) = H + \gamma'
$$
for some $\gamma' \in \Gamma$, by the definition $W^q$.
%By two previous equations it follows that
It follows that
$$
\Ad(k^{-1}k') H' = H + \gamma'
$$
so that by \eqref{eq-lemma-well-known} we have $\Ad(k^{-1}k') H' = w H'$, for some $w \in W$. Then
$$
wH' = H + \gamma' \quad \Longrightarrow \quad H' = \gamma'' + w^{-1}H
$$
where $\gamma'' = w^{-1} \gamma' \in \Gamma$. Now $\Ad(w^{-1}k^{-1}k')H' = H'$ with $H'$ regular implies that $w^{-1}k^{-1}k' \in M$, so that $k'M = kwM$, and then
$$
(H', k'M) = (H, kM) \cdot (\gamma'', w)
$$
as claimed.
%Remeter a parte topológica ao Theorem 13.35. de Hall? Eu acho que não precisa...
\end{proof}

The determination of the fundamental group of compact symmetric spaces follows using the same strategy which is used in the proof of the analogous result for compact Lie groups in Chapter 13 of \cite{Hall} apart from the first and, specially, the last item, which we addressed above. 

\begin{theorem}
\label{thm:pi1-reticulado}
Is $S$ is a compact symmetric space, then 
\[
\pi_1(S) = \Gamma/\Gamma_0
\]
\end{theorem}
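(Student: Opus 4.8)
The plan is to compute $\pi_1(S)$ from the covering $\Psi\colon \t_{\mathrm{reg}} \times K/M \to S_{\mathrm{reg}}$ established in Proposition \ref{prop:pi1-reticulado}, whose deck group is the semidirect product $\Gamma \rtimes W$. Since $S_{\mathrm{reg}}$ is the complement in $S$ of the singular set $K\exp_p(\mathcal{S})$ (a stratified subset of codimension at least $2$ inside a manifold — because the singular orbits through diagram hyperplanes have codimension $\geq 1$ in $\t$ and the isotropy jumps accordingly), the inclusion $S_{\mathrm{reg}} \hookrightarrow S$ induces a surjection on fundamental groups. Similarly $\t_{\mathrm{reg}} \times K/M$ maps into the total space of a bundle and its fundamental group is controlled by that of $K/M$. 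This is precisely the strategy of Chapter 13 of \cite{Hall} for compact Lie groups, and the bulk of the argument transfers verbatim.

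\textbf{Key steps.} First I would record the homotopy exact sequence of the covering $\Psi$: since $\t_{\mathrm{reg}}$ is an open subset of the vector space $\t$ and is a disjoint union of convex alcoves (hence each component is contractible), one has $\pi_1(\t_{\mathrm{reg}} \times K/M) \cong \pi_1(K/M)$ on each component, and $\pi_1(S_{\mathrm{reg}})$ sits in an extension $1 \to \pi_1(\t_{\mathrm{reg}} \times K/M) \to \pi_1(S_{\mathrm{reg}}) \to \Gamma \rtimes W \to 1$ — more carefully, one passes to the component of the basepoint and to the subgroup of $\Gamma \rtimes W$ preserving it, which is $\Gamma \rtimes 1 = \Gamma$ together with the stabilizer of an alcove, but the stabilizer of an alcove in the affine Weyl group $\Gamma_0 \rtimes W$ is trivial. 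The cleanest route is: restrict attention to one alcove $A \subseteq \t_{\mathrm{reg}}$, note $\Psi|_{A \times K/M}$ is still a covering of $S_{\mathrm{reg}}$ (by transitivity of $\Gamma\rtimes W$ on alcove-components, using $\Gamma_0 \subseteq \Gamma$ and Steinberg's theorem implicit in Theorem \ref{thm-steinberg}), whose deck group is now the full preimage; comparing with $\Gamma \rtimes W$ and the (honest) deck group of $K/M \to K/M_*$, which is $W$, one extracts that the "extra" deck transformations are exactly $\Gamma$ acting by translation. Second I would invoke that $S_{\mathrm{reg}} \hookrightarrow S$ and $A \times K/M \to \wt S$ (the universal-type cover, via the Klein covering of Section on the Cartan covering, or more simply $\wt U/\wt K$) are $\pi_1$-surjective because the removed sets have real codimension $\geq 2$; this identifies $\pi_1(S)$ as the quotient of $\Gamma \rtimes W$ by the image of $\pi_1$ of the cover of $S_{\mathrm{reg}}$ sitting inside the universal cover. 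Third, running the same analysis for the simply connected model forces the normal closure of $W$ together with $\Gamma_0$ to be exactly the kernel, leaving $\pi_1(S) = \Gamma/\Gamma_0$.

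\textbf{Where the real work is.} The codimension-$\geq 2$ claim needs a short justification: a root hyperplane $\alpha = c\pi$ has codimension $1$ in $\t$, but the singular locus in $S$ is $K\cdot\exp_p(\mathcal{S})$, and moving off the hyperplane inside $S$ one can also move transversally within the $K$-orbit directions, so the codimension of $S \setminus S_{\mathrm{reg}}$ in $S$ is $1 + m_\alpha \geq 2$ as soon as some multiplicity is $\geq 1$ (and for a torus factor there is no diagram at all); this is the standard slice-theorem computation and I would cite it rather than redo it. The genuine subtlety — and the reason the compact-type proof does not apply verbatim — is that $\Gamma$ need not be adapted to the splitting $\t = \wh\t \oplus (\t\cap\z(\u))$, so one cannot simply split off a torus factor and argue by products; the remedy is that Proposition \ref{propos:gamma-inv} still gives $\Gamma_0 \subseteq \Gamma \subseteq \Gamma_1$ with $\Gamma_0$ a lattice of $\wh\t$, which is exactly enough for the alcove-stabilizer argument to go through because $W$ acts on the alcoves of the full diagram (which lives in $\wh\t$) with the quotient by $\Gamma_0 \rtimes W$ being simply transitive on those alcoves. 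So the main obstacle is bookkeeping the mismatch between $\Gamma$ and the center-complement splitting, and the fix is to push everything into $\wh\t$ where Theorem \ref{thm-steinberg} applies, then lift back along the projection $\t \to \wh\t$; this is the one place where I would slow down and check that no information about the Euclidean factor is lost (it isn't, since the torus factor contributes $\Gamma\cap\z(\u)$ to both $\Gamma$ and, trivially, to $\Gamma_0$'s complement, and a torus has $\pi_1 = \Gamma$ already consistent with $\Gamma/\Gamma_0$ when $\Gamma_0 = 0$ there).
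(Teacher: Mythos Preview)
Your overall strategy matches the paper's: both follow Hall's Chapter~13, using the covering $\Psi$ of Proposition~\ref{prop:pi1-reticulado} together with a codimension argument for $S_{\mathrm{reg}} \hookrightarrow S$. You also correctly observe that restricting $\Psi$ to a single alcove $A$ yields a connected cover $A \times K/M \to S_{\mathrm{reg}}$ whose deck group is the alcove-stabilizer in $\Gamma \rtimes W$, and since $\Gamma_0 \rtimes W$ acts simply transitively on alcoves this stabilizer is indeed isomorphic to $\Gamma/\Gamma_0$.

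The gap is in your ``Third'' step. To deduce $\pi_1(S) = \Gamma/\Gamma_0$ from the two surjections $\pi_1(S_{\mathrm{reg}}) \twoheadrightarrow \pi_1(S)$ and $\pi_1(S_{\mathrm{reg}}) \twoheadrightarrow \Gamma/\Gamma_0$, you must show their kernels agree, and your comparison with the simply connected model does not do this: running your own analysis for $\wt S$ only yields a surjection $\wt\Gamma/\Gamma_0 \twoheadrightarrow \pi_1(\wt S)=1$, which carries no information, while \emph{assuming} $\wt\Gamma = \Gamma_0$ is exactly the $S = \wt S$ instance of the theorem you are proving, so the comparison is circular. The paper closes this with two geometric inputs you omit: a curve-shortening argument showing every class in $\pi_1(S)$ is represented by a closed geodesic in $Tp$ (giving $\Gamma \twoheadrightarrow \pi_1(S)$ directly), and the immersed $3$-sphere construction from Proposition~\ref{propos:gamma-inv} showing each loop $t \mapsto \exp_p(t\,\pi H^\vee_\alpha)$ bounds a disk in $S$ (so $\Gamma_0$ maps to zero). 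The paper explicitly flags that Hall's version of the surjectivity step relies on the simple connectivity of the full flag manifold $G/T$, which \emph{fails} for the symmetric-space analogue $K/M$; this is precisely the obstruction your covering-only approach runs into, and it is why the curve-shortening substitute is needed rather than a bookkeeping fix.
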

\begin{proof}
The proof is given by the following items.  

%  -LUCAS: REESCREVI OS ITENS ABAIXO ACRESCENTANDO REFERÊNCIAS E DANDO MAIS DETALHES. 
 
%  -LUCAS: COLOQUEI MAIS DETALHES NO ITEM 3) 
 
\begin{enumerate}[1)]
    \item By a standard curve-shortening process, one shows that any nontrivial class in $\pi_1(S)$ can be represented by a closed geodesic through $p$ (see Lemma 8.3.10 of \cite{wolf}), say $\gamma(t) = \exp_p(tX)$, $X \in \s$, with $\gamma(1) = \exp_p(X) = p$.  By \eqref{eq-diag} there exists $k \in K_0$ such that $kX = H \in \t$. Note that $\exp_p(H) = k\exp_p(X) = kp = p$, thus $H \in \Gamma$. Since $K_0$ is connected, it follows that $\gamma(t)$ is homotopic the the closed geodesic $\exp_p(tH)$ on $Tp$. Thus, the inclusion
    $$\pi_1(Tp) \to \pi_1(S)$$
    is surjective.
    Note that \cite{Hall} uses the simply connectivity of the full flag manifold of $K$ to prove this (Proposition 13.37 of \cite{Hall}), but here the corresponding object is the full flag manifold $K/M$ of $S$, which is not always simply connected. 

    \item Since $Tp$ is a compact abelian Lie group, $\exp_p: \t \to Tp$ is a homomorphism with kernel $\Gamma$, which induces an isomorphism $\t/\Gamma \to Tp$. It follows that the inclusion of the closed geodesics $\Gamma \to \pi_1(Tp)$ induces an isomorphism.
    
    \item By the standard construction given in the proof of Proposition \ref{propos:gamma-inv} we can realize each closed geodesic of $S$ corresponding to a coroot vector as the equator of a corresponding 3-sphere immersed in $U$ and acting in $S$. By a standard argument (entirely analogous to {\it Proof of Theorem 13.17, one direction}, p.382 of \cite{Hall}), it follows that every closed geodesic that comes from the fundamental lattice $\Gamma_0$ is nullhomotopic in $S$. 
    
    \item By a standard dimension argument (entirely analogous to the Proof of Theorem 13.27, p.386 of \cite{Hall}) it follows that the fundamental group of $S$ equals the fundamental group of $S_{\mathrm{reg}}$.
    
    \item The proof that every loop that does not come from $\Gamma_0$ is not nullhomotopic ({\it Proof of Theorem 13.17, the other direction}, p.398 of \cite{Hall}) depends on the construction of a covering of $S_{\mathrm{reg}}$. \cite{Hall} obtains the universal covering, but a covering is enough for the proof. This construction is the only step whose proof fully departs from the compact group case of \cite{Hall} and it is provided by Proposition \ref{prop:pi1-reticulado}.
\end{enumerate}
\end{proof}

\begin{corollary}
\label{corol:simply-connected}
If $S$ is simply connected then
$$
\Gamma = \Gamma_0 \qquad {\mathcal D} = {\mathcal D}_0 \qquad W^q = W^q_0
$$ 
In particular, all focal orbits are connected.
\end{corollary}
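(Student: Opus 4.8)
The plan is to chain together three results already at our disposal: the computation $\pi_1(S)=\Gamma/\Gamma_0$ of Theorem \ref{thm:pi1-reticulado}, the lattice dichotomy of Theorem \ref{thm-steinberg}, and the count of connected components of a focal orbit from Theorem \ref{propos:cc-focal}. There is no genuine obstacle: the corollary is essentially a bookkeeping consequence of these three statements.

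First I would observe that, $S$ being simply connected, Theorem \ref{thm:pi1-reticulado} makes $\Gamma/\Gamma_0=\pi_1(S)$ trivial; together with the inclusion $\Gamma_0\subseteq\Gamma$ from Proposition \ref{propos:gamma-inv}, this forces $\Gamma=\Gamma_0$. Feeding $\Gamma=\Gamma_0$ into Theorem \ref{thm-steinberg} then yields the remaining two equalities with no further work: $\mathcal D=\mathcal D_0$ is item (i) and $W^q=W^q_0$ is item (iii) of that theorem.

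For the connectedness of the focal orbits I would argue as follows. Given a focal orbit $\mathcal F(H)=K^q H$ with $H\in\s$, equation \eqref{eq-diag} provides $k_0\in K_0$ with $\Ad(k_0)H\in\t$; since $\Ad(k_0)$ carries $q=\exp_p(H)$ to $\exp_p(\Ad(k_0)H)$ and conjugates the associated isotropy subgroups, it maps $\mathcal F(H)$ diffeomorphically onto $\mathcal F(\Ad(k_0)H)$, so I may assume $H\in\t$. Theorem \ref{propos:cc-focal} then identifies the set of connected components of $\mathcal F(H)$ with the group $W^q/W^q_0$, which is trivial by the previous step, whence $\mathcal F(H)$ is connected.

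The only point asking for a line of care is the reduction to $H\in\t$ in the last step, and even that is immediate once one notes that conjugation by $k_0\in K_0$ intertwines $\mathcal F(H)$ with $\mathcal F(\Ad(k_0)H)$ together with their isotropy groups, so that the two orbits have the same number of connected components.
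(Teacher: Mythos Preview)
Your proof is correct and follows essentially the same route as the paper: invoke Theorem~\ref{thm:pi1-reticulado} to get $\Gamma=\Gamma_0$, then Theorem~\ref{thm-steinberg} for $\mathcal D=\mathcal D_0$ and $W^q=W^q_0$, and finally Theorem~\ref{propos:cc-focal} for connectedness of the focal orbits. The paper's version is simply more terse, omitting the explicit mention of Proposition~\ref{propos:gamma-inv} and the reduction to $H\in\t$ that you spell out.
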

\begin{proof}
Since $\pi_1(S)$ is trivial, Theorem \ref{thm:pi1-reticulado} implies that $\Gamma = \Gamma_0$, then Theorem \ref{thm-steinberg} implies that ${\mathcal D} = {\mathcal D}_0$ and $W^q = W^q_0$. The connectedness of the focal orbits then follows from Theorem \ref{propos:cc-focal}.
\end{proof}

\begin{proposition}
Each connected component of a focal orbit of corresponds to a distinct homotopy class of curves between $p$ and $q$ by the correspondence
\begin{eqnarray}
\label{eq-injection-pi1}
\pi_0(\mathcal{F}(H)) = W^q/W^q_0 &\,\,\hookrightarrow\,\,& \pi_1(S) = \Gamma/\Gamma_0 \\ 
w \text{ \rm mod } W^q_0 & \,\,\mapsto\,\,& wH - H \text{ \rm mod } \Gamma_0 
\nonumber
\end{eqnarray}
\end{proposition}
\begin{proof}
Fixing the geodesic $\gamma_H(t) = \exp_p(tH)$ from $p$ to $q = \exp_p(H)$, $H \in \t$, gives us the map
$$
\pi_{p,q}(S) \to \pi_1(S), \qquad [\gamma] \mapsto [\gamma * \gamma_{H}^{-1}]
$$
between the homotopy classes of curves between $p$ and $q$ and the fundamental group of loops at $p$. This map is a bijection with inverse  
$\pi_1(S) \to \pi_{p,q}(S)$, $[\gamma] \mapsto [\gamma * \gamma_{H}]$.
Restricting this to the geodesics corresponding to 
%(the directions on) 
a connected component of a focal orbit $\mathcal{F}(H)$, by Theorem \ref{propos:cc-focal} we get the injection \eqref{eq-injection-pi1}, as claimed.
\end{proof}

Using that $W_0^q$ is the centralizer of $H$ modulo $\Gamma_0$ (see remark after Theorem \ref{thm-steinberg}), one can prove directly, by taking quotients, that the map \eqref{eq-injection-pi1} is an injection.

\section{Conjugate and cut loci}

Crittenden \cite{crittenden} and Sakai \cite{sakai} characterized, respectively, the first conjugate locus and the cut locus of compact symmetric spaces showing that they are determined by a maximal flat torus. In this section we use our results to reobtain their results, with short and independent proofs.  

Recall that a tangent vector $X \in T_pS$ is a (first tangent) conjugate point at $p$ when $tX$ has null index for $0 \leq t < 1$ and positive index for $t=1$, where the index of $Y \in T_pS$ along the geodesic $c_Y(t) = \exp_p(tY)$ is the dimension of the kernel of the differential $d_Y \exp_p: T_p S \to T_{\exp_p(Y)} S$. 
The (first tangent) conjugate locus at $p$ is the set of first tangent conjugate points in $T_pS$.
%and its image by $\exp_p$ is the (first) conjugate locus at $p$. 
Conjugate points are related to variations by geodesic as follows: $X \in T_pS$ is a tangent conjugate point at $p$ if and only if there exists a nontrivial variation of geodesics around $c_X$ with fixed start point $p$ at $t=0$ and, {\em up to first order}, fixed end point $q=\exp_p(X)$ at $t=1$. 
Now, recall that a tangent vector $X \in T_pS$ is a (tangent) cut point at $p$ when the geodesic $c_X(t) = \exp_p(tX)$ is minimizing for $0 \leq t \leq 1$ and ceases to be minimizing for $t>1$. The (tangent) cut locus at $p$ is the set of tangent cut points in $T_pS$.
%and its image by $\exp_p$ is the cut locus at $p$.

The following result is well known and shows that tangent conjugate points of a compact symmetric space $S = U/K$ are intimately related to the focal orbits that appear in Theorem \ref{thm:inverse-image}.

\begin{lemma}
\label{propos:indice}
The index of $H \in \t$ along the geodesic $c_H(t) = \exp_p(tH)$ equals
$\dim {\mathcal F}(H)$.  Furthermore, $H$ is a tangent conjugate point of the geodesic $c_H$ iff there exists a nontrivial variation of geodesics around $c_H$ with fixed start point $p$ and fixed end point $q=\exp_p(H)$.  Note that such a variation corresponds to a curve in ${\mathcal F}(H)$ that passes through $H$.
\end{lemma}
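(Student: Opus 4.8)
The plan is to break the statement into three parts: (a) the index of $H$ along $c_H$ equals $\dim\mathcal{F}(H)$; (b) the equivalence with the existence of a nontrivial variation by geodesics with both endpoints fixed; and (c) the observation that such a variation traces out a curve in $\mathcal{F}(H)$ through $H$. The key input for (a) is the Jacobi equation on a symmetric space, where the curvature operator $R_H = \ad(H)^2$ (restricted to $\s$ in the appropriate sense) has eigenvalues $-\alpha(H)^2$ on the root spaces $\s_\alpha$ and $0$ on $\t$. The kernel of $d_H\exp_p$ is then spanned by the Jacobi fields vanishing at $t=0$ and $t=1$, and a standard computation shows the Jacobi field along $\s_\alpha$ vanishing at $0$ is $\sin(t\alpha(H))$ times a parallel field; this vanishes at $t=1$ precisely when $\alpha(H)\in\pi\Z\setminus 0$. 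Summing the multiplicities $m_\alpha$ over those roots gives $\dim\ker d_H\exp_p = \sum_{\alpha(H)\in\pi\Z - 0} m_\alpha$, which by \eqref{eq:dim-focalorbit} of Theorem \ref{propos:cc-focal} equals $\dim\mathcal{F}(H)$.

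For (b), the general principle is that $H$ is a conjugate point along $c_H$ if and only if there is a nontrivial Jacobi field along $c_H$ vanishing at both $t=0$ and $t=1$; such a Jacobi field is exactly the variation field of a (first-order) variation by geodesics fixing $p$ at $t=0$ and fixing $q$ \emph{to first order} at $t=1$ (this is the content of the paragraph just before the lemma). The sharpened claim in the lemma is that for symmetric spaces one can actually realize this by an \emph{honest} variation by geodesics with $q$ fixed exactly, not just to first order. The point is that on a symmetric space Jacobi fields are explicit trigonometric expressions, so a Jacobi field vanishing at $0$ and $1$ integrates to a genuine one-parameter family: concretely, if $J(t)$ comes from the $\sin(t\alpha(H))$-type field in $\s_\alpha$, one writes down the variation $c_s(t) = \exp_p\big(t\,\mathrm{Ad}(\exp(sY))H\big)$ for a suitable $Y\in\k$ (an element of $\k_\alpha$, using Lemma \ref{lemma:sakai}), and checks directly that $\exp_p(\mathrm{Ad}(\exp(sY))H)=q$ for all $s$ because $\mathrm{Ad}(\exp(sY))H = H + (\text{something in }\Gamma\text{-direction})$ — more precisely because $\exp(sY)\in K^q$ when $\alpha(H)\in\pi\Z$, which is exactly the content of \eqref{eq-algebra-isotropia}: $\k_\alpha\subseteq\k^q$ precisely when $\alpha(H)\in\pi\Z$.

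For (c), once the variation $c_s$ has both endpoints fixed, the curve $s\mapsto c_s'(0) = \mathrm{Ad}(\exp(sY))H$ lies in $K^q H = \mathcal{F}(H)$ and passes through $H$ at $s=0$; conversely any smooth curve in $\mathcal{F}(H)=K^q H$ through $H$, say $s\mapsto \mathrm{Ad}(k_s)H$ with $k_0=1$, $k_s\in K^q$, gives the variation $c_s(t)=\exp_p(t\,\mathrm{Ad}(k_s)H)$ with $c_s(0)=p$ and $c_s(1)=\mathrm{Ad}(k_s)\exp_p(H)=\mathrm{Ad}(k_s)q=q$, hence is nontrivial iff the curve in $\mathcal{F}(H)$ is, i.e. iff $\dim\mathcal{F}(H)>0$, i.e. iff $H$ is conjugate.

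\textbf{Main obstacle.} The delicate point is part (b): upgrading a first-order variation (a Jacobi field vanishing at both ends) to a genuine variation by geodesics keeping $q$ \emph{exactly} fixed. In a general Riemannian manifold this upgrade fails, so the proof must genuinely exploit the symmetric-space structure — specifically the correspondence between Jacobi fields along $c_H$ and the tangent space at $H$ of the focal orbit $\mathcal{F}(H)$, which follows from differentiating the $K^q$-action. I would present this by identifying $T_H\mathcal{F}(H) = \{\,[\![Y,H]\!] : Y\in\k^q\,\}$ with the span of the Jacobi fields in question (using the explicit brackets of Lemma \ref{lemma:sakai} and the eigenvalue computation above), so that the ``first-order = genuine'' passage is automatic because the orbit itself provides the genuine family. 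Everything else — the Jacobi equation computation, the trigonometric solutions, the bookkeeping of multiplicities — is routine.
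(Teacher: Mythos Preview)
Your proposal is correct and follows essentially the same approach as the paper: compute the kernel of $d_H\exp_p$ via the eigenvalues $\sin(\alpha(H))/\alpha(H)$ on $\s_\alpha$ (the paper cites Helgason for this, you redo the Jacobi field computation), compare with the dimension formula \eqref{eq:dim-focalorbit}, and use the focal orbit itself to produce the genuine fixed-endpoint variation.

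One minor simplification worth noting: for part (b) the paper's argument is shorter than yours. You work to match individual Jacobi fields with tangent vectors to $\mathcal{F}(H)$ via $\k_\alpha\subseteq\k^q$ and Lemma~\ref{lemma:sakai}, effectively proving $T_H\mathcal{F}(H)\cong\ker d_H\exp_p$. The paper bypasses this: once (a) gives $\mathrm{index}(H)=\dim\mathcal{F}(H)$, the forward direction of (b) is just ``$H$ conjugate $\Rightarrow$ index $>0$ $\Rightarrow$ $\dim\mathcal{F}(H)>0$ $\Rightarrow$ there is a nontrivial smooth curve in $\mathcal{F}(H)$ through $H$'', and that curve \emph{is} the genuine variation. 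The converse is, as you say, immediate since a genuine fixed-endpoint variation is in particular a first-order one. So your identification of $T_H\mathcal{F}(H)$ with the Jacobi kernel, while true and conceptually illuminating, is not needed for the lemma as stated.
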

\begin{proof}
By Theorems IV.4.1 and Lemma VII.2.9 of \cite{helgason}, the eigenvalues of $d_H \exp_p(H)$ are given by $\sin(\alpha(H))/\alpha(H)$ with multiplicity $\dim \s_\alpha = m_\alpha$,  for $\alpha \in \Pi^+$, thus is zero iff $\alpha(H) \in  \pi\Z - 0$. 
This, together with equation \eqref{eq:dim-focalorbit} proves the first part.

For the the second part, if $H$ is tangent conjugate then, by the previous paragraph, $\dim {\mathcal F}(H) > 0$, so that there exists a nontrivial variation of geodesics around $c_H$ with fixed endpoints $p$ and $q$, which corresponds to a nontrivial curve in ${\mathcal F}(H)$ passing through $H$. The reciprocal is immediate from the second equivalent definition of conjugate point.
\end{proof}

For the cut points of a compact symmetric space $S = U/K$, we first investigate the submanifold geometry of a maximal flat torus $Tp$. It is immediate that $Tp$ is totally geodesic in $S$, moreover, we have the following.

\begin{proposition}
\label{lema-metrica}
$Tp$ is geodesically convex in $S$, that is, the Riemannian distance of $S$ induced in $Tp$ coincides with its intrinsic Riemannian distance.
\end{proposition}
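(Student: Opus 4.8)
The plan is to reduce to the case in which one of the two points is the basepoint $p$, and then to identify both the ambient distance and the intrinsic distance with one and the same ``minimal norm in a coset of $\Gamma$'' formula. The inequality $d_S(x,y)\le d_{Tp}(x,y)$ for $x,y\in Tp$ is trivial, since any curve in $Tp$ is in particular a curve in $S$. For the reverse inequality I would invoke homogeneity: the torus $Tp$ is the orbit of $p$ under $T$, and $T\subseteq U$ acts by isometries of $S$ that preserve $Tp$, hence by isometries of $Tp$ equipped with the induced metric. Thus it is enough to prove $d_S(p,q)=d_{Tp}(p,q)$ for $q=\exp_p(H)$ with $H\in\t$.

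Next I would compute $d_S(p,q)$ using Theorem \ref{thm:inverse-image}. Since $S$ is compact, hence complete, one has $d_S(p,q)=\min\{\,|X|:X\in\s,\ \exp_p(X)=q\,\}$, the minimum being attained along a minimizing geodesic. By Theorem \ref{thm:inverse-image}, $\exp_p^{-1}(q)=\bigcup_{\gamma\in\Gamma}\mathcal F(H+\gamma)$, and each focal orbit $\mathcal F(H+\gamma)=\Ad(K^q)(H+\gamma)$ consists entirely of vectors of norm $|H+\gamma|$, because the inner product on $\s$ is $\Ad(K)$-invariant. Hence the set of norms occurring in $\exp_p^{-1}(q)$ is exactly $\{\,|H+\gamma|:\gamma\in\Gamma\,\}$, so that $d_S(p,q)=\min_{\gamma\in\Gamma}|H+\gamma|$.

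Finally I would compute $d_{Tp}(p,q)$ and compare. Since $Tp$ is totally geodesic in $S$ and $\t$ is flat, $Tp$ is a compact flat manifold whose Riemannian exponential at $p$ is $\exp_p|_\t\colon\t\to Tp$; being the exponential map of a flat manifold, it is a local isometry, hence a Riemannian covering whose deck group is the group of translations by $\Gamma=\ker(\exp_p|_\t)$. Therefore $(Tp,\text{induced metric})$ is isometric to the flat torus $\t/\Gamma$, and $d_{Tp}(p,q)=\min_{\gamma\in\Gamma}|H+\gamma|$ — which is precisely the minimal norm in $H+\Gamma$ described, via the Dirichlet domain, by Proposition \ref{prop:reticulado}. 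Comparing with the previous paragraph gives $d_S(p,q)=d_{Tp}(p,q)$, as desired. The only point that requires a little care is this last identification of $(Tp,\text{induced metric})$ with $\t/\Gamma$, i.e.\ that $\exp_p|_\t$ is a Riemannian covering with deck group $\Gamma$; everything else is a direct consequence of Theorem \ref{thm:inverse-image}, the completeness of $S$, and the $\Ad(K)$-invariance of the metric.
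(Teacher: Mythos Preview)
Your proof is correct and follows essentially the same route as the paper: both reduce by homogeneity to the pair $(p,q)$ with $q=\exp_p(H)$, invoke Theorem~\ref{thm:inverse-image} to see that every $X\in\exp_p^{-1}(q)$ lies in some $\mathcal F(H+\gamma)$ and hence has norm $|H+\gamma|$, and compare with the intrinsic distance on $Tp$ computed as $\min_{\gamma\in\Gamma}|H+\gamma|$. The only cosmetic difference is that the paper phrases the comparison as the inequality $|X|=|H+\gamma|\ge|H|$ for a minimal $X\in\s$ and a minimal $H\in\t$, whereas you compute both distances as the same minimum and equate them.
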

\begin{proof}
Denote by $d_S$, $d_T$ the Riemmanian distances of $S$ and $Tp$, respectively.
For $x, y \in Tp$ it is immediate that $d_S(x,y) \leq d_T(x,y)$.  For the converse inequality, since $T$ acts transitively by isometries on $T$ and $S$, we can assume that $x=hp$, $y=p$, where $h \in T$.
Let $X$ of minimal norm in $\s$ such that $hp = \exp_p(X)$, then $d_S(hp,p) = |X|$, since $\exp_p(tX)$, $0 \leq t \leq 1$, is a geodesic of length $|X|$ between $p$ and $\exp_p(X)$ in $S$.
Now let $H$ of minimal norm in $\t$ such that $hp = \exp_p(H)$, then $d_T(p,hp) = |H|$, arguing as before, now in $Tp$.
Since $\exp_p(X) = \exp_p(H) = hp$, it follows from Theorem \ref{thm:inverse-image} that $X \in {\mathcal F}(H + \gamma)$, for some $\gamma \in \Gamma$.  Hence $|X| = |H + \gamma| \geq |H|$, by the minimality of $|H|$. Thus $d_S(hp,p) \geq d_T(hp,p)$, as claimed.
\end{proof}

Consider the Dirichilet domain ${\mathcal D}$ of the lattice $\Gamma$ and the Dirichilet domain ${\mathcal D}_0$ of the fundamental lattice $\Gamma_0$ (see Section \ref{Weyl-group}).

\begin{theorem}
\label{teo:loci}
Let $S = U/K$ be a compact symmetric space, then
\begin{enumerate}[i)]
% \item The cut locus of $Tp$ is  the boundary of ${\mathcal D}$.
 
% \item The conjugate locus of $Tp$ is the boundary of ${\mathcal D}_0$.

\item (Sakai) The cut locus of $S$ is the $K$ orbit of $\partial {\mathcal D}$.

\item (Crittenden) The conjugate locus of $S$ is the $K$ orbit of $\partial {\mathcal D}_0$. 
\end{enumerate}
Furthermore, $S$ is simply connected iff its cut locus equals its first conjugate locus.
\end{theorem}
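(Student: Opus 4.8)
\emph{Plan.} The whole argument reduces both loci to the maximal flat torus $\t$ by $K$-equivariance and then reads them off from the geometry of the Dirichlet domains, the cut locus being governed by Theorem \ref{thm:inverse-image} (when a geodesic from $p$ stops minimizing) and the conjugate locus by Lemma \ref{propos:indice} (when it first acquires a conjugate point). Since $\Ad(K)$ acts on $\s=T_pS$ by linear isometries realizing isometries of $S$ fixing $p$, both the index of a tangent vector and the property of being a tangent cut point are $\Ad(K)$-invariant, and by \eqref{eq-diag} every vector of $\s$ is $K$-conjugate into $\t$; so in each case it suffices to compute the trace of the locus on $\t$ and take its $K$-orbit. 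For (i), given $H\in\t$ and $t\ge0$ the geodesic segment $c_H|_{[0,t]}$ has length $|tH|$, hence is minimizing iff $|tH|=d_S(p,\exp_p(tH))$; by Proposition \ref{lema-metrica} together with the elementary description of distance in the flat torus $Tp\cong\t/\Gamma$ (or directly from Theorem \ref{thm:inverse-image}, since every geodesic from $p$ to $\exp_p(tH)$ has initial velocity in a focal orbit $K^q(tH+\gamma)$ all of whose vectors have norm $|tH+\gamma|$) we get $d_S(p,\exp_p(tH))=\min_{\gamma\in\Gamma}|tH+\gamma|$, so $c_H|_{[0,t]}$ is minimizing iff $tH\in\overline{\mathcal D}$. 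As $\overline{\mathcal D}$ is compact, convex and a neighbourhood of the origin (Proposition \ref{prop:reticulado}), the set of such $t$ is an interval $[0,t_0]$, so $c_H$ minimizes exactly up to $t_0$ and $H$ is a tangent cut point iff it lies on the boundary of $\overline{\mathcal D}$, which is $\partial\mathcal D$ because $\mathcal D=\mathrm{int}\,\overline{\mathcal D}$; taking $K$-orbits yields Sakai's statement.

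For (ii), by the eigenvalue computation in the proof of Lemma \ref{propos:indice} the index of $tH$ along $c_{tH}$ equals $\sum\{m_\alpha:\alpha\in\Pi^+,\ t\alpha(H)\in\pi\Z\setminus 0\}$. This vanishes for every $t\in[0,1)$ precisely when no root functional reaches $\pm\pi$ before $t=1$, i.e.\ when $|\alpha(H)|\le\pi$ for all $\alpha\in\Pi^+$, which is exactly $H\in\overline{\mathcal D_0}$ (recall $\mathcal D_0=\{H:\alpha(H)<\pi,\ \alpha\in\Pi\}=\{H:|\alpha(H)|<\pi,\ \alpha\in\Pi^+\}$, so $\mathcal D_0=\mathrm{int}\,\overline{\mathcal D_0}$); and, assuming $H\in\overline{\mathcal D_0}$, the index is positive at $t=1$ precisely when $|\alpha(H)|=\pi$ for some $\alpha$, i.e.\ when $H\in\partial\mathcal D_0=\overline{\mathcal D_0}\setminus\mathcal D_0$. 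Hence the first tangent conjugate points in $\t$ are exactly $\partial\mathcal D_0$, and taking $K$-orbits yields Crittenden's statement.

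For the equivalence: by (i) and (ii) the cut and conjugate loci coincide iff $K\partial\mathcal D=K\partial\mathcal D_0$, and intersecting with $\t$, using \eqref{eq-lemma-well-known} and the $W$-invariance of $\mathcal D$ and of $\mathcal D_0$, this is equivalent to $\partial\mathcal D=\partial\mathcal D_0$. Now $\overline{\mathcal D}\subseteq\overline{\mathcal D_0}$ always: this follows from $\Gamma_0\subseteq\Gamma$ (Proposition \ref{propos:gamma-inv}) since the Dirichlet domain of $\Gamma_0$ is $\overline{\mathcal D_0}$ (Theorem \ref{thm-steinberg}), or more geometrically from the fact that a conjugate point never strictly precedes the cut point along a geodesic, so each ray of $\t$ leaves $\overline{\mathcal D}$ no later than $\overline{\mathcal D_0}$. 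Given this inclusion and $\mathcal D=\mathrm{int}\,\overline{\mathcal D}$, $\mathcal D_0=\mathrm{int}\,\overline{\mathcal D_0}$, an elementary convexity argument gives $\partial\mathcal D=\partial\mathcal D_0\iff\overline{\mathcal D}=\overline{\mathcal D_0}$ (a point of $\overline{\mathcal D_0}\setminus\overline{\mathcal D}$ would force the segment from the origin to it to cross $\partial\mathcal D$ at an interior point of $\overline{\mathcal D_0}$). Finally $\overline{\mathcal D}=\overline{\mathcal D_0}\iff\Gamma=\Gamma_0$: if $S$ is simply connected then already $\mathcal D=\mathcal D_0$ by Corollary \ref{corol:simply-connected}; conversely if $\overline{\mathcal D}=\overline{\mathcal D_0}$ then this common set is a fundamental domain for both $\Gamma$ and its sublattice $\Gamma_0$ (the latter by Theorem \ref{thm-steinberg}), and comparing covolumes forces $\Gamma=\Gamma_0$, whence $\pi_1(S)=0$ by Theorem \ref{thm:pi1-reticulado}. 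The genuinely delicate point is part (ii): one must rule out any coincidence among the root functionals that could shift the first conjugate point, and it is precisely the explicit spectrum of $d_H\exp_p$ in Lemma \ref{propos:indice} that secures this. A secondary bookkeeping issue, absorbed uniformly above, is the possible euclidean factor, where $\overline{\mathcal D_0}$ is unbounded while $\overline{\mathcal D}$ is always compact, so $\overline{\mathcal D}\subsetneq\overline{\mathcal D_0}$ and neither side of the equivalence holds, consistently with $\pi_1(S)\ne0$.
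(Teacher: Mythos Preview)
Your proof is correct and follows the same architecture as the paper's: reduce both loci to $\t$ by $K$-equivariance, identify the trace on $\t$ with $\partial\mathcal D$ (via Proposition~\ref{lema-metrica}/Theorem~\ref{thm:inverse-image}) and $\partial\mathcal D_0$ (via Lemma~\ref{propos:indice}), then compare the two domains using $\Gamma_0\subseteq\Gamma$ and Theorem~\ref{thm:pi1-reticulado}. The only tactical differences are that for (i) you argue directly from the convexity and compactness of $\overline{\mathcal D}$ where the paper invokes Lemma~\ref{lema:reticulado} to show the geodesic stops minimizing past $\partial\mathcal D$, and for the equivalence you use a covolume comparison (both lattices sharing the same fundamental domain) where the paper argues that $\mathcal D=\mathcal D_0$ forces $\min_{\Gamma_0}|H+\,\cdot\,|=\min_{\Gamma}|H+\,\cdot\,|$ and then specializes $H\in\Gamma$; both routes are valid and of comparable length.
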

\begin{proof}
We will use freely the results of Proposition \ref{prop:reticulado}.
We first claim that the cut locus of $Tp$ is the boundary of ${\mathcal D}$. Indeed, recall that $\ov{\mathcal D}$ is the set of vectors of $\t$ with minimal norm among its $\Gamma$ equivalents. 
Let $H$ be in the tangent cut locus of $T$, then $tH \in \ov{\mathcal D}$ for $0 \leq t \leq 1$, and $tH \not\in \ov{\mathcal D}$ for $t > 1$ so that $H \in \partial {\mathcal D}$.
Reciprocally, let $H$ be in the boundary of ${\mathcal D}$. Since ${\mathcal D}$ is a convex neighbourhood of the origin, it follows that $tH \in \ov{\mathcal D}$ is minimal for $0 \leq t \leq 1$. For $t=1$ we have $H \in \partial {\mathcal D}$ so that there exists $\gamma \neq 0$ in $\Gamma$ such that $|H + \gamma| = |H|$. Thus $Y = H + \gamma$ is a nontrivial focal equivalent to $H$ and by Lemma \ref{lema:reticulado} it follows that $(1+\epsilon)H$ has norm greater than $Y + \epsilon H$, for $\epsilon > 0$, while $\exp((1+\epsilon)H) = \exp(Y + \epsilon H)$. It follows that $\exp(tH)$ is not a minimizing geodesic for $t > 1$, which proves our claim.

For item (i), from Proposition \ref{lema-metrica} it follows that the cut locus of $Tp$ is contained in the cut locus of $S$, and so its adjoint orbit, since $\Ad(K)$ acts by isometries in $\s$.  By using that $\Ad(K)$ conjugates every $X \in \s$ to some $H \in \t$, it follows from Proposition \ref{lema-metrica} that the cut locus of $S$ is contained in the adjoint orbit of the cut locus of $Tp$, which proves item (i).

We now claim that the conjugate locus of $Tp$ is the boundary of ${\mathcal D}_0$. 
Indeed, recall that $\ov{{\mathcal D}_0} = \{ H \in \t: \, \alpha(H) \leq \pi,\, \alpha \in \Pi \}$. Thus its boundary $\partial {\mathcal D}_0$ is contained in the union of hyperplanes $\alpha(H) = \pi,\, \alpha \in \Pi$. The claim now follows since ${\mathcal D}_0$ consists of regular elements and ${\mathcal D}_0$ is a convex neighbourhood of the origin so that, by Lemma \ref{propos:indice} the first conjugate point of any ray $tH$ occurs when it hits the boundary of ${\mathcal D}_0$.

For item (ii), it is obvious that the conjugate locus of $Tp$ is contained in the tangent conjugate locus of $S$, and so its adjoint orbit, since $\Ad(K)$ acts by isometries in $\s$.  By using that $\Ad(K)$ conjugates every $X \in \s$ to some $H \in \t$ and that $\Ad(K)$ acts by isometries, it follows that the tangent conjugate locus of $S$ is contained in the $\Ad(K)$ orbit of the tangent conjugate locus of $T$, which proves the first claim.
%Let $H \in \t$ be in the first conjugate locus of $T$. Since ${\mathcal D}_0$ is a convex neighbourhood of the origin we have that $tH \in {\mathcal D}_0$ for $t$ sufficiently small. 
%It follows that $tH$ is regular until it hits its first nonroot hyperplane $\alpha(H)=1$ at $t=1$ in $\partial {\mathcal D}_0$.
%Reciprocally, let $H \in \partial {\mathcal D}_0$ so that $H$ is a conjugate point. Since ${\mathcal D}_0$ is a convex neighbourhood of the origin we have that $tH \in {\mathcal D}_0$ for $t \in [0,1)$. It follows that that $tH$ is regular for $t \in [0,1)$ so that $H$ is in the first conjugate locus of $T$.
For the second claim, by the previous items, the cut locus of $S$ equals its first conjugate locus iff the cut locus of $Tp$ equals its first conjugate locus iff ${\mathcal D} = {\mathcal D}_0$ since both are open convex sets whose closure is the convex closure of its boundary points.  If $\Gamma = \Gamma_0$ then it is immediate that ${\mathcal D} = {\mathcal D}_0$.  Reciprocally, if ${\mathcal D} = {\mathcal D}_0$ we claim that 
$$
\min_{\gamma_0 \in \Gamma_0} |H + \gamma_0| = 
\min_{\gamma \in \Gamma} |H + \gamma|
$$ 
Indeed, we have that $\min_{\gamma_0 \in \Gamma_0} |H + \gamma_0|$ is attained at a $H + \gamma_0 \in \ov{\mathcal D}_0$, for some $\gamma_0 \in \Gamma_0$. Since
${\mathcal D}_0 = {\mathcal D}$, we have that $|H + \gamma_0|$ equals
$\min_{\gamma \in \Gamma} |(H + \gamma_0) + \gamma|$.  Since $\Gamma_0 \subseteq\Gamma$, this equals $\min_{\gamma \in \Gamma} |H + \gamma|$, which proves the claim.
Taking $H = \gamma \in \Gamma$ it follows that $\min_{\gamma_0 \in \Gamma_0} | \gamma + \gamma_0| = 0$ so that $\gamma \in \Gamma_0$ and thus $\Gamma \subseteq\Gamma_0$ so that $\Gamma = \Gamma_0$.
By Theorem \ref{thm:pi1-reticulado}, $S$ is simply connected iff $\Gamma = \Gamma_0$, which proves our claim.
\end{proof}


\begin{thebibliography}{11}

%\bibitem[Bor98]{borel}
\bibitem{borel}
Borel, A.:
Semisimple Groups and Riemannian Symmetric Spaces,
Text and Readings in Math.\,  \textbf{16}, 
Hindustan Book Agency, India, 1998.

% %\bibitem[Bou02]{bourbaki-lie-alg}
% \bibitem{bourbaki-lie-alg}
% Bourbaki, N.: 
% Elements of Mathematics. Lie Groups and Lie Algebras, 
% Springer-Verlag, New York, 2002.

%\bibitem[Bot59]{bott}
\bibitem{bott}
Bott, R.:
{\em The Stable Homotopy of the Classical Groups},
Ann.\ Math., Second Series, \textbf{70}(2) (1959), 313-337.

%\bibitem[Cri62]{crittenden}
\bibitem{crittenden}
Crittenden, R.:
{\em Minimum and conjugate points in symmetric spaces}
Canad.\ J.\ Math.\ \textbf{14} (1962), 320-328.

%\bibitem[Esch]{eschenburg}
\bibitem{eschenburg}
Eschenburg, J.-H.:
{\em Lecture Notes on Symmetric Spaces},
\url{https://myweb.rz.uni-augsburg.de/~eschenbu/symspace.pdf}

%\bibitem[Fer80]{ferus}
\bibitem{ferus}
Ferus, D.:
{\em Symmetric Submanifolds of Euclidean Space},
Math.\ Ann.\, \textbf{247} (1980), 81-93.

%\bibitem[Hal15]{Hall}
\bibitem{Hall}  
Hall, B.:\ \
Lie Groups, Lie Algebras, and Representations:
An Elementary Introduction,
Springer, 2nd Edition, 2015.

%\bibitem[Hel78]{helgason}
\bibitem{helgason}
Helgason, S.:\ \
Differential Geometry, Lie Groups and Symmetric Spaces, 
Academic Press, 1978.

%\bibitem[HN12]{neeb}
\bibitem{neeb}
Hilgert, J., Neeb, K.:\ \
Structure and Geometry of Lie Groups,
Springer, 2012.

%\bibitem[HI12]{HI12}
\bibitem{HI12}
Holland, J., Ion, B.:
{\em Notes on symmetric spaces},
arXiv:1211.4159v1,
2012.

%\bibitem[Loo69]{loos} 
\bibitem{loos} 
Loos, O.:
Symmetric Spaces II: Compact Spaces and Classification,
W.A.\ Benjamin Inc., New York / Amsterdam, 1969.

% \bibitem[SM99]{san-martin-alglie}
% L.\ A.\ B.\ San Martin:
% Álgebras de Lie,
% Editora UNICAMP, Campinas, 2nd edition, 1999.

%\bibitem[Sak77]{sakai}
\bibitem{sakai}
Sakai, T.:
{\em On cut loci of compact symmetric spaces},
Hokkaido Math.\ J.\ \textbf{6} (1977), 136-161.

%\bibitem[SS18]{SS18}
\bibitem{SS18}
Seco, L., San Martin, L.A.B.:
{\em Counting geodesics on compact Lie groups},
Diff.\ Geom.\ Appl.\ \textbf{56} (2018) 1-19.

%\bibitem[Tak78]{takeuchi}
\bibitem{takeuchi}
Takeuchi, M.:
{\em On conjugate loci and cut loci of compact symmetric spaces I},
Tsukuba J.\ Math.\ \textbf{2} (1978), 35-68.

%\bibitem[Wo11]{wolf}
\bibitem{wolf}
Wolf, J.A.:
Spaces of constant curvature, 
6th ed., AMS Chelsea Publishing, Providence, RI, 2011.

\end{thebibliography}
\end{document}